\newtheorem{theorem}{Theorem}
\theoremstyle{plain}
\newtheorem{axiom}{Axiom}
\newtheorem{conjecture}{Conjecture}
\newtheorem{corollary}{Corollary}
\newtheorem{definition}{Definition}
\newtheorem{example}{Example}
\newtheorem{exercise}{Exercise}
\newtheorem{lemma}{Lemma}
\newtheorem{problem}{Problem}
\newtheorem{proposition}{Proposition}
\newtheorem{remark}{Remark}
\numberwithin{equation}{section}
\numberwithin{theorem}{section}
\numberwithin{algorithm}{section}
\numberwithin{axiom}{section}
\numberwithin{case}{section}
\numberwithin{claim}{section}
\numberwithin{conclusion}{section}
\numberwithin{condition}{section}
\numberwithin{conjecture}{section}
\numberwithin{corollary}{section}
\numberwithin{criterion}{section}
\numberwithin{definition}{section}
\numberwithin{example}{section}
\numberwithin{exercise}{section}
\numberwithin{lemma}{section}
\numberwithin{notation}{section}
\numberwithin{problem}{section}
\numberwithin{proposition}{section}
\numberwithin{remark}{section}
\numberwithin{solution}{section}
\chardef\@x10\chardef\@xv60
\def\tcitime{
\def\@time{%
  \@minute\time\@hour\@minute\divide\@hour\@xv
  \ifnum\@hour<\@x 0\fi\the\@hour:%
  \multiply\@hour\@xv\advance\@minute-\@hour
  \ifnum\@minute<\@x 0\fi\the\@minute
  }}%
\def\QCTOpt[#1]#2{%
  \def\QCTOptB{#1}
  \def\QCTOptA{#2}
}
\def\QCTNOpt#1{%
  \def\QCTOptA{#1}
  \let\QCTOptB\empty
}
\def\Qct{%
  \@ifnextchar[{%
    \QCTOpt}{\QCTNOpt}
}
\def\QCBOpt[#1]#2{%
  \def\QCBOptB{#1}
  \def\QCBOptA{#2}
}
\def\QCBNOpt#1{%
  \def\QCBOptA{#1}
  \let\QCBOptB\empty
}
\def\Qcb{%
  \@ifnextchar[{%
    \QCBOpt}{\QCBNOpt}
}
\def\PrepCapArgs{%
  \ifx\QCBOptA\empty
    \ifx\QCTOptA\empty
      {}%
    \else
      \ifx\QCTOptB\empty
        {\QCTOptA}%
      \else
        [\QCTOptB]{\QCTOptA}%
      \fi
    \fi
  \else
    \ifx\QCBOptA\empty
      {}%
    \else
      \ifx\QCBOptB\empty
        {\QCBOptA}%
      \else
        [\QCBOptB]{\QCBOptA}%
      \fi
    \fi
  \fi
}
\def\GRAPHICSPS#1{%
 \ifcase\GRAPHICSTYPE
   \special{ps: #1}%
 \or
   \special{language "PS", include "#1"}%
 \fi
}%
\def\graffile#1#2#3#4{%
    \bgroup
    \leavevmode
    \@ifundefined{bbl@deactivate}{\def~{\string~}}{\activesoff}
    \raise -#4 \BOXTHEFRAME{%
        \hbox to #2{\raise #3\hbox to #2{\null #1\hfil}}}%
    \egroup
}%
\def\draftbox#1#2#3#4{%
 \leavevmode\raise -#4 \hbox{%
  \frame{\rlap{\protect\tiny #1}\hbox to #2%
   {\vrule height#3 width\z@ depth\z@\hfil}%
  }%
 }%
}%
\newif\ifwasdraft
\def\GRAPHIC#1#2#3#4#5{%
 \ifnum\draft=\@ne\draftbox{#2}{#3}{#4}{#5}%
  \else\graffile{#1}{#3}{#4}{#5}%
  \fi
 }%
\def\addtoLaTeXparams#1{%
    \edef\LaTeXparams{\LaTeXparams #1}}%
\newif\ifBoxFrame \BoxFramefalse
\newif\ifOverFrame \OverFramefalse
\newif\ifUnderFrame \UnderFramefalse
\def\BOXTHEFRAME#1{%
   \hbox{%
      \ifBoxFrame
         \frame{#1}%
      \else
         {#1}%
      \fi
   }%
}
\def\doFRAMEparams#1{\BoxFramefalse\OverFramefalse\UnderFramefalse\readFRAMEparams#1\end}%
\def\readFRAMEparams#1{%
 \ifx#1\end%
  \let\next=\relax
  \else
  \ifx#1i\dispkind=\z@\fi
  \ifx#1d\dispkind=\@ne\fi
  \ifx#1f\dispkind=\tw@\fi
  \ifx#1t\addtoLaTeXparams{t}\fi
  \ifx#1b\addtoLaTeXparams{b}\fi
  \ifx#1p\addtoLaTeXparams{p}\fi
  \ifx#1h\addtoLaTeXparams{h}\fi
  \ifx#1X\BoxFrametrue\fi
  \ifx#1O\OverFrametrue\fi
  \ifx#1U\UnderFrametrue\fi
  \ifx#1w
    \ifnum\draft=1\wasdrafttrue\else\wasdraftfalse\fi
    \draft=\@ne
  \fi
  \let\next=\readFRAMEparams
  \fi
 \next
 }%
\def\IFRAME#1#2#3#4#5#6{%
      \bgroup
      \let\QCTOptA\empty
      \let\QCTOptB\empty
      \let\QCBOptA\empty
      \let\QCBOptB\empty
      #6%
      \parindent=0pt%
      \leftskip=0pt
      \rightskip=0pt
      \setbox0 = \hbox{\QCBOptA}%
      \@tempdima = #1\relax
      \ifOverFrame
          \typeout{This is not implemented yet}%
          \show\HELP
      \else
         \ifdim\wd0>\@tempdima
            \advance\@tempdima by \@tempdima
            \ifdim\wd0 >\@tempdima
               \textwidth=\@tempdima
               \setbox1 =\vbox{%
                  \noindent\hbox to \@tempdima{\hfill\GRAPHIC{#5}{#4}{#1}{#2}{#3}\hfill}\\%
                  \noindent\hbox to \@tempdima{\parbox[b]{\@tempdima}{\QCBOptA}}%
               }%
               \wd1=\@tempdima
            \else
               \textwidth=\wd0
               \setbox1 =\vbox{%
                 \noindent\hbox to \wd0{\hfill\GRAPHIC{#5}{#4}{#1}{#2}{#3}\hfill}\\%
                 \noindent\hbox{\QCBOptA}%
               }%
               \wd1=\wd0
            \fi
         \else
            \ifdim\wd0>0pt
              \hsize=\@tempdima
              \setbox1 =\vbox{%
                \unskip\GRAPHIC{#5}{#4}{#1}{#2}{0pt}%
                \break
                \unskip\hbox to \@tempdima{\hfill \QCBOptA\hfill}%
              }%
              \wd1=\@tempdima
           \else
              \hsize=\@tempdima
              \setbox1 =\vbox{%
                \unskip\GRAPHIC{#5}{#4}{#1}{#2}{0pt}%
              }%
              \wd1=\@tempdima
           \fi
         \fi
         \@tempdimb=\ht1
         \advance\@tempdimb by \dp1
         \advance\@tempdimb by -#2%
         \advance\@tempdimb by #3%
         \leavevmode
         \raise -\@tempdimb \hbox{\box1}%
      \fi
      \egroup%
}%
\def\DFRAME#1#2#3#4#5{%
 \begin{center}
     \let\QCTOptA\empty
     \let\QCTOptB\empty
     \let\QCBOptA\empty
     \let\QCBOptB\empty
     \ifOverFrame 
        #5\QCTOptA\par
     \fi
     \GRAPHIC{#4}{#3}{#1}{#2}{\z@}
     \ifUnderFrame 
        \nobreak\par\nobreak#5\QCBOptA
     \fi
 \end{center}%
 }%
\def\FFRAME#1#2#3#4#5#6#7{%
 \begin{figure}[#1]%
  \let\QCTOptA\empty
  \let\QCTOptB\empty
  \let\QCBOptA\empty
  \let\QCBOptB\empty
  \ifOverFrame
    #4
    \ifx\QCTOptA\empty
    \else
      \ifx\QCTOptB\empty
        \caption{\QCTOptA}%
      \else
        \caption[\QCTOptB]{\QCTOptA}%
      \fi
    \fi
    \ifUnderFrame\else
      \label{#5}%
    \fi
  \else
    \UnderFrametrue%
  \fi
  \begin{center}\GRAPHIC{#7}{#6}{#2}{#3}{\z@}\end{center}%
  \ifUnderFrame
    #4
    \ifx\QCBOptA\empty
      \caption{}%
    \else
      \ifx\QCBOptB\empty
        \caption{\QCBOptA}%
      \else
        \caption[\QCBOptB]{\QCBOptA}%
      \fi
    \fi
    \label{#5}%
  \fi
  \end{figure}%
 }%
\def\makeactives{
  \catcode`\"=\active
  \catcode`\;=\active
  \catcode`\:=\active
  \catcode`\'=\active
  \catcode`\~=\active
}
   \gdef\activesoff{%
      \def"{\string"}
      \def;{\string;}
      \def:{\string:}
      \def'{\string'}
      \def~{\string~}
    }
\def\FRAME#1#2#3#4#5#6#7#8{%
 \bgroup
 \ifnum\draft=\@ne
   \wasdrafttrue
 \else
   \wasdraftfalse%
 \fi
 \def\LaTeXparams{}%
 \dispkind=\z@
 \def\LaTeXparams{}%
 \doFRAMEparams{#1}%
 \ifnum\dispkind=\z@\IFRAME{#2}{#3}{#4}{#7}{#8}{#5}\else
  \ifnum\dispkind=\@ne\DFRAME{#2}{#3}{#7}{#8}{#5}\else
   \ifnum\dispkind=\tw@
    \edef\@tempa{\noexpand\FFRAME{\LaTeXparams}}%
    \@tempa{#2}{#3}{#5}{#6}{#7}{#8}%
    \fi
   \fi
  \fi
  \ifwasdraft\draft=1\else\draft=0\fi{}%
  \egroup
 }%
\def\TEXUX#1{"texux"}
\def\func#1{\mathop{\rm #1}\nolimits}%
\long\def\QQQ#1#2{%
     \long\expandafter\def\csname#1\endcsname{#2}}%
\long\def\QQA#1#2{}%
\def\QTR#1#2{{\csname#1\endcsname #2}}
\def\EXPAND#1[#2]#3{}%
\def\NOEXPAND#1[#2]#3{}%
\def\LaTeXparent#1{}%
\def\ChildStyles#1{}%
\def\ChildDefaults#1{}%
\def\QTagDef#1#2#3{}%
  \providecommand{\UNICODE}[2][]{}
\def\QQfnmark#1{\footnotemark}
 \def\abstract{%
  \if@twocolumn
   \section*{Abstract (Not appropriate in this style!)}%
   \else \small 
   \begin{center}{\bf Abstract\vspace{-.5em}\vspace{\z@}}\end{center}%
   \quotation 
   \fi
  }%
   \def\registered{\relax\ifmmode{}\r@gistered
                    \else$\m@th\r@gistered$\fi}%
 \def\r@gistered{^{\ooalign
  {\hfil\raise.07ex\hbox{$\scriptstyle\rm\text{R}$}\hfil\crcr
  \mathhexbox20D}}}}{}%
\newdimen\theight
\def\Column{%
 \vadjust{\setbox\z@=\hbox{\scriptsize\quad\quad tcol}%
  \theight=\ht\z@\advance\theight by \dp\z@\advance\theight by \lineskip
  \kern -\theight \vbox to \theight{%
   \rightline{\rlap{\box\z@}}%
   \vss
   }%
  }%
 }%
\def\qed{%
 \ifhmode\unskip\nobreak\fi\ifmmode\ifinner\else\hskip5\p@\fi\fi
 \hbox{\hskip5\p@\vrule width4\p@ height6\p@ depth1.5\p@\hskip\p@}%
 }%
\def\miss{\hbox{\vrule height2\p@ width 2\p@ depth\z@}}%
\def\tcol#1{{\baselineskip=6\p@ \vcenter{#1}} \Column}  %
\def\newfmtname{LaTeX2e}
  \DeclareOldFontCommand{\rm}{\normalfont\rmfamily}{\mathrm}
  \DeclareOldFontCommand{\sf}{\normalfont\sffamily}{\mathsf}
  \DeclareOldFontCommand{\tt}{\normalfont\ttfamily}{\mathtt}
  \DeclareOldFontCommand{\bf}{\normalfont\bfseries}{\mathbf}
  \DeclareOldFontCommand{\it}{\normalfont\itshape}{\mathit}
  \DeclareOldFontCommand{\sl}{\normalfont\slshape}{\@nomath\sl}
  \DeclareOldFontCommand{\sc}{\normalfont\scshape}{\@nomath\sc}
\def\alpha{{\Greekmath 010B}}%
\def\beta{{\Greekmath 010C}}%
\def\gamma{{\Greekmath 010D}}%
\def\delta{{\Greekmath 010E}}%
\def\epsilon{{\Greekmath 010F}}%
\def\zeta{{\Greekmath 0110}}%
\def\eta{{\Greekmath 0111}}%
\def\theta{{\Greekmath 0112}}%
\def\iota{{\Greekmath 0113}}%
\def\kappa{{\Greekmath 0114}}%
\def\lambda{{\Greekmath 0115}}%
\def\mu{{\Greekmath 0116}}%
\def\nu{{\Greekmath 0117}}%
\def\xi{{\Greekmath 0118}}%
\def\pi{{\Greekmath 0119}}%
\def\rho{{\Greekmath 011A}}%
\def\sigma{{\Greekmath 011B}}%
\def\tau{{\Greekmath 011C}}%
\def\upsilon{{\Greekmath 011D}}%
\def\phi{{\Greekmath 011E}}%
\def\chi{{\Greekmath 011F}}%
\def\psi{{\Greekmath 0120}}%
\def\omega{{\Greekmath 0121}}%
\def\varepsilon{{\Greekmath 0122}}%
\def\vartheta{{\Greekmath 0123}}%
\def\varpi{{\Greekmath 0124}}%
\def\varrho{{\Greekmath 0125}}%
\def\varsigma{{\Greekmath 0126}}%
\def\varphi{{\Greekmath 0127}}%
\def\nabla{{\Greekmath 0272}}
\def\FindBoldGroup{%
   {\setbox0=\hbox{$\mathbf{x\global\edef\theboldgroup{\the\mathgroup}}$}}%
}
\def\Greekmath#1#2#3#4{%
    \if@compatibility
        \ifnum\mathgroup=\symbold
           \mathchoice{\mbox{\boldmath$\displaystyle\mathchar"#1#2#3#4$}}%
                      {\mbox{\boldmath$\textstyle\mathchar"#1#2#3#4$}}%
                      {\mbox{\boldmath$\scriptstyle\mathchar"#1#2#3#4$}}%
                      {\mbox{\boldmath$\scriptscriptstyle\mathchar"#1#2#3#4$}}%
        \else
           \mathchar"#1#2#3#4%
        \fi 
    \else 
        \FindBoldGroup
        \ifnum\mathgroup=\theboldgroup 
           \mathchoice{\mbox{\boldmath$\displaystyle\mathchar"#1#2#3#4$}}%
                      {\mbox{\boldmath$\textstyle\mathchar"#1#2#3#4$}}%
                      {\mbox{\boldmath$\scriptstyle\mathchar"#1#2#3#4$}}%
                      {\mbox{\boldmath$\scriptscriptstyle\mathchar"#1#2#3#4$}}%
        \else
           \mathchar"#1#2#3#4%
        \fi     	    
	  \fi}
\newif\ifGreekBold  \GreekBoldfalse
\let\SAVEPBF=\pbf
\def\pbf{\GreekBoldtrue\SAVEPBF}%
  \newcounter{equationnumber}  
  \def\mathletters{%
     \addtocounter{equation}{1}
     \edef\@currentlabel{\theequation}%
     \setcounter{equationnumber}{\c@equation}
     \setcounter{equation}{0}%
     \edef\theequation{\@currentlabel\noexpand\alph{equation}}%
  }
    \def\BibTeX{{\rm B\kern-.05em{\sc i\kern-.025em b}\kern-.08em
                 T\kern-.1667em\lower.7ex\hbox{E}\kern-.125emX}}}{}%
\def\AmS{{\protect\usefont{OMS}{cmsy}{m}{n}%
                A\kern-.1667em\lower.5ex\hbox{M}\kern-.125emS}}}{}%
\def\@@eqncr{\let\@tempa\relax
    \ifcase\@eqcnt \def\@tempa{& & &}\or \def\@tempa{& &}%
      \else \def\@tempa{&}\fi
     \@tempa
     \if@eqnsw
        \iftag@
           \@taggnum
        \else
           \@eqnnum\stepcounter{equation}%
        \fi
     \fi
     \global\tag@false
     \global\@eqnswtrue
     \global\@eqcnt\z@\cr}
\def\TCItag{\@ifnextchar*{\@TCItagstar}{\@TCItag}}
\def\@TCItag#1{%
    \global\tag@true
    \global\def\@taggnum{(#1)}}
\def\@TCItagstar*#1{%
    \global\tag@true
    \global\def\@taggnum{#1}}
\let\DOTSI\relax
\def\RIfM@{\relax\ifmmode}%
\def\FN@{\futurelet\next}%
\def\iint{\DOTSI\intno@\tw@\FN@\ints@}%
\def\iiint{\DOTSI\intno@\thr@@\FN@\ints@}%
\def\iiiint{\DOTSI\intno@4 \FN@\ints@}%
\def\idotsint{\DOTSI\intno@\z@\FN@\ints@}%
\def\ints@{\findlimits@\ints@@}%
\newif\iflimtoken@
\newif\iflimits@
\def\findlimits@{\limtoken@true\ifx\next\limits\limits@true
 \else\ifx\next\nolimits\limits@false\else
 \limtoken@false\ifx\ilimits@\nolimits\limits@false\else
 \ifinner\limits@false\else\limits@true\fi\fi\fi\fi}%
\def\multint@{\int\ifnum\intno@=\z@\intdots@                          
 \else\intkern@\fi                                                    
 \ifnum\intno@>\tw@\int\intkern@\fi                                   
 \ifnum\intno@>\thr@@\int\intkern@\fi                                 
 \int}
\def\multintlimits@{\intop\ifnum\intno@=\z@\intdots@\else\intkern@\fi
 \ifnum\intno@>\tw@\intop\intkern@\fi
 \ifnum\intno@>\thr@@\intop\intkern@\fi\intop}%
\def\intic@{%
    \mathchoice{\hskip.5em}{\hskip.4em}{\hskip.4em}{\hskip.4em}}%
\def\negintic@{\mathchoice
 {\hskip-.5em}{\hskip-.4em}{\hskip-.4em}{\hskip-.4em}}%
\def\ints@@{\iflimtoken@                                              
 \def\ints@@@{\iflimits@\negintic@
   \mathop{\intic@\multintlimits@}\limits                             
  \else\multint@\nolimits\fi                                          
  \eat@}
 \else                                                                
 \def\ints@@@{\iflimits@\negintic@
  \mathop{\intic@\multintlimits@}\limits\else
  \multint@\nolimits\fi}\fi\ints@@@}%
\def\intkern@{\mathchoice{\!\!\!}{\!\!}{\!\!}{\!\!}}%
\def\plaincdots@{\mathinner{\cdotp\cdotp\cdotp}}%
\def\intdots@{\mathchoice{\plaincdots@}%
 {{\cdotp}\mkern1.5mu{\cdotp}\mkern1.5mu{\cdotp}}%
 {{\cdotp}\mkern1mu{\cdotp}\mkern1mu{\cdotp}}%
 {{\cdotp}\mkern1mu{\cdotp}\mkern1mu{\cdotp}}}%
\def\RIfM@{\relax\protect\ifmmode}
\def\text{\RIfM@\expandafter\text@\else\expandafter\mbox\fi}
\let\nfss@text\text
\def\text@#1{\mathchoice
   {\textdef@\displaystyle\f@size{#1}}%
   {\textdef@\textstyle\tf@size{\firstchoice@false #1}}%
   {\textdef@\textstyle\sf@size{\firstchoice@false #1}}%
   {\textdef@\textstyle \ssf@size{\firstchoice@false #1}}%
   \glb@settings}
\def\textdef@#1#2#3{\hbox{{%
                    \everymath{#1}%
                    \let\f@size#2\selectfont
                    #3}}}
\newif\iffirstchoice@
\def\Let@{\relax\iffalse{\fi\let\\=\cr\iffalse}\fi}%
\def\vspace@{\def\vspace##1{\crcr\noalign{\vskip##1\relax}}}%
\def\multilimits@{\bgroup\vspace@\Let@
 \baselineskip\fontdimen10 \scriptfont\tw@
 \advance\baselineskip\fontdimen12 \scriptfont\tw@
 \lineskip\thr@@\fontdimen8 \scriptfont\thr@@
 \lineskiplimit\lineskip
 \vbox\bgroup\ialign\bgroup\hfil$\m@th\scriptstyle{##}$\hfil\crcr}%
\def\Sb{_\multilimits@}%
\def\endSb{\crcr\egroup\egroup\egroup}%
\def\Sp{^\multilimits@}%
\newdimen\ex@
\def\rightarrowfill@#1{$#1\m@th\mathord-\mkern-6mu\cleaders
 \hbox{$#1\mkern-2mu\mathord-\mkern-2mu$}\hfill
 \mkern-6mu\mathord\rightarrow$}%
\def\leftarrowfill@#1{$#1\m@th\mathord\leftarrow\mkern-6mu\cleaders
 \hbox{$#1\mkern-2mu\mathord-\mkern-2mu$}\hfill\mkern-6mu\mathord-$}%
\def\leftrightarrowfill@#1{$#1\m@th\mathord\leftarrow
\mkern-6mu\cleaders
 \hbox{$#1\mkern-2mu\mathord-\mkern-2mu$}\hfill
 \mkern-6mu\mathord\rightarrow$}%
\def\overrightarrow{\mathpalette\overrightarrow@}%
\def\overrightarrow@#1#2{\vbox{\ialign{##\crcr\rightarrowfill@#1\crcr
 \noalign{\kern-\ex@\nointerlineskip}$\m@th\hfil#1#2\hfil$\crcr}}}%
\def\overleftarrow{\mathpalette\overleftarrow@}%
\def\overleftarrow@#1#2{\vbox{\ialign{##\crcr\leftarrowfill@#1\crcr
 \noalign{\kern-\ex@\nointerlineskip}$\m@th\hfil#1#2\hfil$\crcr}}}%
\def\overleftrightarrow{\mathpalette\overleftrightarrow@}%
\def\overleftrightarrow@#1#2{\vbox{\ialign{##\crcr
   \leftrightarrowfill@#1\crcr
 \noalign{\kern-\ex@\nointerlineskip}$\m@th\hfil#1#2\hfil$\crcr}}}%
\def\underrightarrow{\mathpalette\underrightarrow@}%
\def\underrightarrow@#1#2{\vtop{\ialign{##\crcr$\m@th\hfil#1#2\hfil
  $\crcr\noalign{\nointerlineskip}\rightarrowfill@#1\crcr}}}%
\def\underleftarrow{\mathpalette\underleftarrow@}%
\def\underleftarrow@#1#2{\vtop{\ialign{##\crcr$\m@th\hfil#1#2\hfil
  $\crcr\noalign{\nointerlineskip}\leftarrowfill@#1\crcr}}}%
\def\underleftrightarrow{\mathpalette\underleftrightarrow@}%
\def\underleftrightarrow@#1#2{\vtop{\ialign{##\crcr$\m@th
  \hfil#1#2\hfil$\crcr
 \noalign{\nointerlineskip}\leftrightarrowfill@#1\crcr}}}%
\def\qopnamewl@#1{\mathop{\operator@font#1}\nlimits@}
\let\nlimits@\displaylimits
\def\setboxz@h{\setbox\z@\hbox}
\def\varlim@#1#2{\mathop{\vtop{\ialign{##\crcr
 \hfil$#1\m@th\operator@font lim$\hfil\crcr
 \noalign{\nointerlineskip}#2#1\crcr
 \noalign{\nointerlineskip\kern-\ex@}\crcr}}}}
 \def\rightarrowfill@#1{\m@th\setboxz@h{$#1-$}\ht\z@\z@
  $#1\copy\z@\mkern-6mu\cleaders
  \hbox{$#1\mkern-2mu\box\z@\mkern-2mu$}\hfill
  \mkern-6mu\mathord\rightarrow$}
\def\leftarrowfill@#1{\m@th\setboxz@h{$#1-$}\ht\z@\z@
  $#1\mathord\leftarrow\mkern-6mu\cleaders
  \hbox{$#1\mkern-2mu\copy\z@\mkern-2mu$}\hfill
  \mkern-6mu\box\z@$}
\def\projlim{\qopnamewl@{proj\,lim}}
\def\injlim{\qopnamewl@{inj\,lim}}
\def\varinjlim{\mathpalette\varlim@\rightarrowfill@}
\def\varprojlim{\mathpalette\varlim@\leftarrowfill@}
\def\varliminf{\mathpalette\varliminf@{}}
\def\varliminf@#1{\mathop{\underline{\vrule\@depth.2\ex@\@width\z@
   \hbox{$#1\m@th\operator@font lim$}}}}
\def\varlimsup{\mathpalette\varlimsup@{}}
\def\varlimsup@#1{\mathop{\overline
  {\hbox{$#1\m@th\operator@font lim$}}}}
\def\align{\@verbatim \frenchspacing\@vobeyspaces \@alignverbatim
You are using the "align" environment in a style in which it is not defined.}
\let\csname endalign*\endcsname =\endtrivlist
\def\alignat{\@verbatim \frenchspacing\@vobeyspaces \@alignatverbatim
You are using the "alignat" environment in a style in which it is not defined.}
\let\csname endalignat*\endcsname =\endtrivlist
\def\xalignat{\@verbatim \frenchspacing\@vobeyspaces \@xalignatverbatim
You are using the "xalignat" environment in a style in which it is not defined.}
\let\csname endxalignat*\endcsname =\endtrivlist
\def\gather{\@verbatim \frenchspacing\@vobeyspaces \@gatherverbatim
You are using the "gather" environment in a style in which it is not defined.}
\let\csname endgather*\endcsname =\endtrivlist
\def\multiline{\@verbatim \frenchspacing\@vobeyspaces \@multilineverbatim
You are using the "multiline" environment in a style in which it is not defined.}
\let\csname endmultiline*\endcsname =\endtrivlist
\def\arrax{\@verbatim \frenchspacing\@vobeyspaces \@arraxverbatim
You are using a type of "array" construct that is only allowed in AmS-LaTeX.}
\def\tabulax{\@verbatim \frenchspacing\@vobeyspaces \@tabulaxverbatim
You are using a type of "tabular" construct that is only allowed in AmS-LaTeX.}
\let\csname endarrax*\endcsname =\endtrivlist
\let\csname endtabulax*\endcsname =\endtrivlist
 \def\endequation{%
     \ifmmode\ifinner 
      \iftag@
        \addtocounter{equation}{-1} 
        $\hfil
           \displaywidth\linewidth\@taggnum\egroup \endtrivlist
        \global\tag@false
        \global\@ignoretrue   
      \else
        $\hfil
           \displaywidth\linewidth\@eqnnum\egroup \endtrivlist
        \global\tag@false
        \global\@ignoretrue 
      \fi
     \else   
      \iftag@
        \addtocounter{equation}{-1} 
        \eqno \hbox{\@taggnum}
        \global\tag@false%
        $$\global\@ignoretrue
      \else
        \eqno \hbox{\@eqnnum}
        $$\global\@ignoretrue
      \fi
     \fi\fi
 } 
 \newif\iftag@ \tag@false
 \def\TCItag{\@ifnextchar*{\@TCItagstar}{\@TCItag}}
 \def\@TCItag#1{%
     \global\tag@true
     \global\def\@taggnum{(#1)}}
 \def\@TCItagstar*#1{%
     \global\tag@true
     \global\def\@taggnum{#1}}
     \def\tag{\@ifnextchar*{\@tagstar}{\@tag}}
     \def\@tag#1{%
         \global\tag@true
         \global\def\@taggnum{(#1)}}
     \def\@tagstar*#1{%
         \global\tag@true
         \global\def\@taggnum{#1}}
\begin{document}
\title{Lectures on the fourth order $Q$ curvature equation}
\author{Fengbo Hang}
\address{Courant Institute, New York University, 251 Mercer Street, New York
NY 10012}
\email{fengbo@cims.nyu.edu}
\author{Paul C. Yang}
\address{Department of Mathematics, Princeton University, Fine Hall,
Washington Road, Princeton NJ 08544}
\email{yang@math.princeton.edu}
\date{}
\dedicatory{}

\begin{abstract}
We discuss some open problems and recent progress related to the $4$th order
Paneitz operator and $Q$ curvature in dimensions other than $4$.
\end{abstract}

\maketitle

\section{Introduction\label{sec1}}

In conformal geometry, a major tool is a family of conformal covariant
operators and their associated curvature invariants. In dimension $n>2$, the
conformal Laplacian operator%
\begin{equation}
L=-\frac{4\left( n-1\right) }{n-2}\Delta +R  \label{eq1.1}
\end{equation}%
enjoys the following covariance property,%
\begin{equation}
{L}_{\rho ^{\frac{4}{n-2}}g}\varphi =\rho ^{-\frac{n+2}{n-2}}L_{g}(\rho
\varphi )  \label{eq1.2}
\end{equation}%
for any smooth positive function $\rho $ (see \cite{LP}). Here $R$ denotes
the scalar curvature. The associated transformation law of scalar curvature
follows,%
\begin{equation}
{R}_{\rho ^{\frac{4}{n-2}}g}={L}_{\rho ^{\frac{4}{n-2}}g}1=\rho ^{-\frac{n+2%
}{n-2}}L_{g}\rho .  \label{eq1.3}
\end{equation}%
A fundamental result is the solution of the Yamabe problem \cite{Au2, S, T,
Y}, which is related to the sharp constant of the associated Sobolev
inequality. Since then, there is a large literature on the analysis and
geometry of this equation. In order to gain additional information on the
Ricci tensor, the $4$th order $Q$ curvature equation comes into play.

Let $\left( M,g\right) $ be a smooth Riemannian manifold with dimension $%
n\geq 3$, the $Q$ curvature is given by (\cite{B,P})%
\begin{eqnarray}
Q &=&-\frac{1}{2\left( n-1\right) }\Delta R-\frac{2}{\left( n-2\right) ^{2}}%
\left\vert Rc\right\vert ^{2}+\frac{n^{3}-4n^{2}+16n-16}{8\left( n-1\right)
^{2}\left( n-2\right) ^{2}}R^{2}  \label{eq1.4} \\
&=&-\Delta J-2\left\vert A\right\vert ^{2}+\frac{n}{2}J^{2}.  \notag
\end{eqnarray}%
Here $Rc$ is the Ricci tensor and%
\begin{equation}
J=\frac{R}{2\left( n-1\right) },\quad A=\frac{1}{n-2}\left( Rc-Jg\right) .
\label{eq1.5}
\end{equation}%
The Paneitz operator is defined as%
\begin{eqnarray}
&&P\varphi  \label{eq1.6} \\
&=&\Delta ^{2}\varphi +\frac{4}{n-2}\func{div}\left( Rc\left( \nabla \varphi
,e_{i}\right) e_{i}\right) -\frac{n^{2}-4n+8}{2\left( n-1\right) \left(
n-2\right) }\func{div}\left( R\nabla \varphi \right) +\frac{n-4}{2}Q\varphi 
\notag \\
&=&\Delta ^{2}\varphi +\func{div}\left( 4A\left( \nabla \varphi
,e_{i}\right) e_{i}-\left( n-2\right) J\nabla \varphi \right) +\frac{n-4}{2}%
Q\varphi .  \notag
\end{eqnarray}%
Here $e_{1},\cdots ,e_{n}$ is a local orthonormal frame with respect to $g$.
Note that the use of $J$ and $A$ (Schouten tensor) simplifies the formulas
of $Q$ curvature and Paneitz operator.

In dimension $n\neq 4$, the operator satisfies%
\begin{equation}
P_{\rho ^{\frac{4}{n-4}}g}\varphi =\rho ^{-\frac{n+4}{n-4}}P_{g}\left( \rho
\varphi \right)  \label{eq1.7}
\end{equation}%
for any positive smooth function $\rho $. This is similar to (\ref{eq1.2}).
As a consequence we have%
\begin{equation}
Q_{\rho ^{\frac{4}{n-4}}g}=\frac{2}{n-4}P_{\rho ^{\frac{4}{n-4}}g}1=\frac{2}{%
n-4}\rho ^{-\frac{n+4}{n-4}}P_{g}\rho .  \label{eq1.8}
\end{equation}

In dimension $4$, the Paneitz operator satisfies%
\begin{equation}
P_{e^{2w}g}\varphi =e^{-4w}P_{g}\varphi  \label{eq1.9}
\end{equation}%
and the $Q$ curvature transforms as%
\begin{equation}
Q_{e^{2w}g}=e^{-4w}\left( P_{g}w+Q_{g}\right) .  \label{eq1.10}
\end{equation}%
This should be compared to the conformal invariance of $-\Delta $ on surface
and the transformation law of Gaussian curvature under a conformal change of
metric.

The main theme of research is to find out the role of Paneitz operator and $%
Q $ curvature in understanding the geometry of a conformal class and the
topology of underlying manifold. For example we would like to know how the
spectral property of Paneitz operator affects the topology. Below we will
start with dimension $4$, when the $Q$ curvature equation and its
applications is relatively well understood. Then we will discuss recent
progress in dimension $n\geq 5$ about the Green's function of Paneitz
operator and the solution to finding constant $Q$ curvature in a fixed
conformal class. At last we will turn to the dimension $3$, where the $Q$
curvature equation is particularly intriguing and of very different nature
from the scalar curvature equation. Open problems will be pointed out along
the way.

\section{Dimension $4$\label{sec2}}

A basic fact that makes the $Q$ curvature interesting is its appearance in
the Chern-Gauss-Bonnet formula. For a closed $4$-manifold $\left( M,g\right) 
$ we have%
\begin{equation}
\int_{M}Qd\mu +\frac{1}{4}\int_{M}\left\vert W\right\vert ^{2}d\mu =8\pi
^{2}\chi \left( M\right) .  \label{eq2.1}
\end{equation}%
Here $W$ is the Weyl tensor. It follows from the pointwise conformal
invariance of $\left\vert W\right\vert ^{2}d\mu $ and (\ref{eq2.1}) that the 
$Q$ curvature integral is a global conformal invariant which we denote by $%
\kappa _{g}$ i.e.%
\begin{equation}
\kappa _{g}=\int_{M}Q_{g}d\mu _{g}  \label{eq2.2}
\end{equation}%
and $\kappa _{\widetilde{g}}=\kappa _{g}$ for any $\widetilde{g}\in \left[ g%
\right] $, the conformal class of $g$. A basic result about this invariant
is the following sharp upper bound:

\begin{theorem}[\protect\cite{G2}]
\label{thm2.1}Let $\left( M,g\right) $ be a smooth compact four manifold. If 
$L_{g}>0$, then $\kappa _{g}\leq 16\pi ^{2}$ with equality holds if and only
if $\left( M,g\right) $ is conformal diffeomorphic to the standard four
sphere.
\end{theorem}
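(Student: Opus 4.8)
The plan is to exploit the conformal invariance of $\kappa _{g}$ by evaluating it on a Yamabe representative, and then to reduce the bound to the sharp Yamabe inequality. First I would put $\kappa _{g}$ into a Gauss--Bonnet--type form: setting $n=4$ in (\ref{eq1.4}) and integrating over $M$, the $\Delta J$ term drops out, and the pointwise identity $\left\vert A\right\vert ^{2}=\frac{1}{4}\left( \left\vert E\right\vert ^{2}+J^{2}\right) $, where $E=Rc-\frac{R}{4}g$ is the trace--free Ricci tensor (which follows from $A=\frac{1}{2}E+\frac{R}{24}g$ and $\left\langle E,g\right\rangle =0$), yields
\[
\kappa _{g}=\int_{M}\left( 2J^{2}-2\left\vert A\right\vert ^{2}\right) d\mu =\int_{M}\left( \frac{1}{24}R^{2}-\frac{1}{2}\left\vert E\right\vert ^{2}\right) d\mu .
\]
In particular, for \emph{every} metric in $[g]$ one has $\kappa _{g}\leq \frac{1}{24}\int_{M}R^{2}\,d\mu $, with equality exactly when that metric is Einstein.

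Since $L_{g}>0$, the Yamabe invariant $Y\left( M,[g]\right) $ is positive, so by the solution of the Yamabe problem (\cite{Au2, S, T, Y}) there is $g_{0}\in [g]$ with $R_{g_{0}}\equiv c>0$ constant. Because $\kappa _{g}=\kappa _{g_{0}}$ is conformally invariant, the displayed formula gives
\[
\kappa _{g}=\frac{c^{2}}{24}\mathrm{Vol}(g_{0})-\frac{1}{2}\int_{M}\left\vert E_{g_{0}}\right\vert ^{2}d\mu _{g_{0}}\leq \frac{c^{2}\,\mathrm{Vol}(g_{0})}{24}=\frac{Y\left( M,[g]\right) ^{2}}{24},
\]
the last equality because $Y\left( M,[g]\right) =c\,\mathrm{Vol}(g_{0})^{1/2}$ when $R_{g_{0}}\equiv c$. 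Aubin's sharp estimate $Y\left( M,[g]\right) \leq Y\left( S^{4}\right) =8\sqrt{6}\,\pi $ then gives $\kappa _{g}\leq \frac{1}{24}Y\left( S^{4}\right) ^{2}=\frac{384\pi ^{2}}{24}=16\pi ^{2}$.

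For the rigidity statement: if $\kappa _{g}=16\pi ^{2}$, then both inequalities above are equalities, so $E_{g_{0}}\equiv 0$ (i.e.\ $g_{0}$ is Einstein) and $Y\left( M,[g]\right) =Y\left( S^{4}\right) $; the equality case of the Yamabe inequality then forces $\left( M,g\right) $ to be conformally diffeomorphic to the standard four--sphere. Conversely, on the round $S^{4}$ the Weyl tensor vanishes, the metric is Einstein, and (\ref{eq2.1}) gives $\kappa =16\pi ^{2}$, so the bound is attained exactly there.

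The algebraic rearrangement and the arithmetic with $Y\left( S^{4}\right) $ are routine; the substantive inputs are external, namely the existence of a constant--scalar--curvature representative in a conformal class of positive Yamabe type, and the sharp inequality $Y\left( M,[g]\right) \leq Y\left( S^{4}\right) $ together with its rigidity. The one point requiring care is the equality analysis: one must check that $Y\left( M,[g]\right) =Y\left( S^{4}\right) $ \emph{already} yields conformal diffeomorphism with the round sphere, so that the Einstein conclusion for $g_{0}$ is automatic and no separate Obata--type argument is needed.
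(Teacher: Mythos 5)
Your proof is correct, but it takes a genuinely different route from the one in the paper. After the (correct) algebraic reduction $\kappa_g=\int_M\bigl(\tfrac{1}{24}R^2-\tfrac{1}{2}\vert E\vert^2\bigr)d\mu$, you evaluate on a Yamabe representative and quote the sharp Yamabe inequality; note that it is essential here that $g_0$ be a Yamabe \emph{minimizer}, since $c\,\mathrm{Vol}(g_0)^{1/2}=Y\left( M,[g]\right)$ fails for a general constant scalar curvature representative, though the existence theorem you invoke does supply a minimizer. This is essentially the classical Yamabe-based argument for the bound, and it is valid, but it imports the full resolution of the Yamabe problem --- existence of minimizers, Aubin's estimate $Y\left( M,[g]\right)\le Y\left( S^4\right)$, and the Aubin--Schoen rigidity (hence, indirectly, the positive mass theorem) --- both for the inequality and for the equality case. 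The paper instead deduces Theorem~\ref{thm2.1} from the identity (\ref{eq2.3}) of Theorem~\ref{thm2.2}: pairing with the test function $1$ yields the exact deficit formula $16\pi^2-\kappa_g=\tfrac{1}{2}\int_M\vert Rc_{G_{L,p}^{2}g}\vert_{g}^{2}\,d\mu\ge 0$, and in the equality case the asymptotically flat metric $G_{L,p}^{2}g$ on $M\setminus\{p\}$ is Ricci flat, so Bishop--Gromov relative volume comparison forces it to be isometric to $\mathbb{R}^4$, whence $\left( M,g\right)$ is conformal to the round $S^4$. That route needs only the Green's function of $L_g$ (available from $L_g>0$) and volume comparison --- no Yamabe existence theory and no positive mass theorem --- and it produces a quantitative identity whose higher-dimensional analogue (Theorem~\ref{thm3.2}) drives the rest of the paper; your route is shorter if one is content to quote the Yamabe literature wholesale, but it yields only the inequality and rigidity, not the deficit formula.
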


Theorem \ref{thm2.1} follows from an identity found in \cite{HY4}. The
identity will have a crucial counterpart in other dimensions.

\begin{theorem}[\protect\cite{HY4}]
\label{thm2.2}Let $\left( M,g\right) $ be a $4$-dimensional smooth compact
Riemannian manifold with $L_{g}>0$. For $p\in M$, let $G_{L,p}$ be the
Green's function for $L_{g}$ with pole at $p$, then we have $\left\vert
Rc_{G_{L,p}^{2}g}\right\vert _{g}^{2}$ is bounded and%
\begin{equation}
P\left( \log G_{L,p}\right) =16\pi ^{2}\delta _{p}-\frac{1}{2}\left\vert
Rc_{G_{L,p}^{2}g}\right\vert _{g}^{2}-Q  \label{eq2.3}
\end{equation}%
in distribution sense.
\end{theorem}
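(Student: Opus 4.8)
The plan is to work in the conformal class of $g$ and compute the $Q$ curvature of the metric $\widehat{g} = G_{L,p}^2 g$ on the punctured manifold $M \setminus \{p\}$, then extract the distributional identity by a blow-up/capacity argument near $p$. First I would observe that, since $\widehat{g} = G_{L,p}^{\frac{4}{n-2}} g$ with $n = 4$, i.e. the exponent is exactly $2$, the conformal factor $\rho = G_{L,p}$ satisfies $L_g \rho = 0$ away from $p$, so $R_{\widehat{g}} = 0$ on $M \setminus \{p\}$ by \eqref{eq1.3}. Hence $J_{\widehat{g}} = 0$ there, and from \eqref{eq1.4} the $Q$ curvature of $\widehat{g}$ reduces to
\begin{equation}
Q_{\widehat{g}} = -\Delta_{\widehat{g}} J_{\widehat{g}} - 2|A_{\widehat{g}}|_{\widehat{g}}^2 + 2 J_{\widehat{g}}^2 = -2|A_{\widehat{g}}|_{\widehat{g}}^2 = -\tfrac{1}{2}|Rc_{\widehat{g}}|_{\widehat{g}}^2 \label{eq:Qhat}
\end{equation}
on $M \setminus \{p\}$, using $A_{\widehat{g}} = \frac{1}{n-2}(Rc_{\widehat{g}} - J_{\widehat{g}} g) = \frac{1}{2} Rc_{\widehat{g}}$ when $J_{\widehat{g}} = 0$. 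This already identifies the right-hand side away from $p$; note that $|Rc_{\widehat{g}}|_{\widehat{g}}^2 = G_{L,p}^{-4}|Rc_{\widehat{g}}|_g^2$, so the claimed boundedness of $|Rc_{\widehat{g}}|_g^2$ near $p$ is equivalent to controlling $G_{L,p}^4 Q_{\widehat{g}}$ near the pole, which I will get from the asymptotic expansion of the Green's function.

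Next I would apply the conformal transformation law \eqref{eq1.9}. Writing $G_{L,p} = e^{w}$ on $M \setminus \{p\}$, so that $\widehat{g} = e^{2w} g$ with $w = \log G_{L,p}$, formula \eqref{eq1.10} gives, pointwise on $M \setminus \{p\}$,
\begin{equation}
P_g w + Q_g = e^{4w} Q_{\widehat{g}} = G_{L,p}^4 \cdot \left(-\tfrac{1}{2} G_{L,p}^{-4} |Rc_{\widehat{g}}|_g^2\right) = -\tfrac{1}{2}|Rc_{\widehat{g}}|_g^2, \label{eq:pointwise}
\end{equation}
where $P_g$ is the $4$-dimensional Paneitz operator. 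Thus $P_g(\log G_{L,p}) = -\frac{1}{2}|Rc_{\widehat{g}}|_g^2 - Q_g$ holds classically away from $p$. It remains to identify the distributional contribution at $p$, i.e. to show that as a distribution $P_g(\log G_{L,p})$ equals the classical right-hand side plus $16\pi^2 \delta_p$. For this I would use the standard asymptotics of the conformal Laplacian Green's function: in $g$-normal coordinates near $p$, $G_{L,p} = \frac{1}{\omega_3 |x|^2}(1 + O(|x|))$ where $\omega_3$ is the area of the unit $3$-sphere (with the normalization making $G_{L,p}$ a fundamental solution of $L_g$), so $\log G_{L,p} = -2\log|x| + O(|x|)$ plus a constant. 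Since in four dimensions $\Delta^2(\log|x|)$ carries a delta mass — precisely $\Delta^2(\log\frac{1}{|x|}) = -\Delta^2(\log|x|)$ produces the multiple of $\delta_0$ corresponding to the biharmonic fundamental solution constant — and the lower-order terms of $P_g$ (the divergence terms and the $\frac{n-4}{2}Q\varphi$ term, the last of which vanishes for $n=4$) contribute nothing singular enough to produce a delta, testing $P_g(\log G_{L,p})$ against a smooth cutoff and integrating by parts on $M \setminus B_\varepsilon(p)$, then letting $\varepsilon \to 0$, isolates the coefficient $16\pi^2$. This is a direct computation once the Green's function expansion is in hand.

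The main obstacle is the regularity and expansion of $\log G_{L,p}$ near the pole: one needs enough terms in the asymptotic expansion of $G_{L,p}$ (and its derivatives up to order three or four) to (i) verify that $|Rc_{\widehat{g}}|_g^2$ extends continuously — in fact boundedly — across $p$, which is what makes the right-hand side of \eqref{eq2.3} a genuine $L^1$ function rather than a measure with an atom, and (ii) justify the integration by parts used to collect the delta coefficient, i.e. to show that all boundary terms over $\partial B_\varepsilon(p)$ except the one producing $16\pi^2$ vanish as $\varepsilon \to 0$. When $(M,g)$ is not locally conformally flat one cannot take $G_{L,p}$ to be exactly $\frac{1}{\omega_3|x|^2}$, so the expansion must be carried to sufficient order using elliptic regularity for $L_g G_{L,p} = 0$ off $p$; the delicate point is that the error terms in $\log G_{L,p} + 2\log|x|$ are only $C^{1,\alpha}$-ish a priori, yet their images under the fourth-order operator $P_g$ must be shown to be integrable near $p$ and to contribute no boundary mass. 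This is exactly the kind of estimate that \cite{HY4} establishes, and I would invoke their Green's function analysis for $L_g$ to close the argument; the conformal-covariance bookkeeping above is the conceptual skeleton, and the analytic heart is this local expansion at the pole.
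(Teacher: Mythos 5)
Your proposal is correct and takes essentially the same route as the cited source \cite{HY4} (this survey states Theorem \ref{thm2.2} without giving a proof): scalar-flatness of $G_{L,p}^{2}g$ off the pole gives $Q_{G_{L,p}^{2}g}=-\tfrac{1}{2}\left\vert Rc_{G_{L,p}^{2}g}\right\vert _{G_{L,p}^{2}g}^{2}$, the four-dimensional transformation law (\ref{eq1.10}) then yields the identity away from $p$, and the leading behavior $\log G_{L,p}=-2\log r+O(1)$ produces exactly the coefficient $16\pi ^{2}$ of $\delta _{p}$, independently of the normalization of $G_{L,p}$ since $P$ annihilates constants in dimension $4$. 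The remaining analytic core --- boundedness of $\left\vert Rc_{G_{L,p}^{2}g}\right\vert _{g}^{2}$ across $p$ and the absence of any further singular contribution at the pole --- is, as you correctly identify, exactly what the expansion of $G_{L,p}$ in (conformal) normal coordinates in \cite{HY4} supplies.
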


Choosing $1$ as test function in (\ref{eq2.3}) we see%
\begin{equation*}
\int_{M}Qd\mu =16\pi ^{2}-\frac{1}{2}\int_{M}\left\vert
Rc_{G_{L,p}^{2}g}\right\vert _{g}^{2}d\mu \leq 16\pi ^{2}.
\end{equation*}%
If equality holds, then $Rc_{G_{L,p}^{2}g}=0$ and by the relative volume
comparison theorem we conclude $\left( M,g\right) $ must be conformal
equivalent to the standard $S^{4}$ (see \cite[section 5]{HY4}).

To study the $Q$ curvature equation, it is important that the Paneitz
operator be nonnegative with only constant functions in its kernel. A quite
general condition ensuring such kind of positivity is given by

\begin{theorem}[\protect\cite{G2}]
\label{thm2.3}Let $\left( M,g\right) $ be a smooth compact $4$-dimensional
Riemannian manifold with $L_{g}>0$ and $\kappa _{g}\geq 0$, then the Paneitz
operator $P\geq 0$ and the kernel of $P$ consists of constant functions.
\end{theorem}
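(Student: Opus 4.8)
The zeroth-order term $\frac{n-4}{2}Q\varphi$ of $P$ in (\ref{eq1.6}) vanishes when $n=4$, so $P1=0$ and the constant functions always belong to the kernel of $P$; the assertion thus reduces to proving $\int_{M}\varphi P\varphi\,d\mu\geq0$ for all $\varphi$, with equality forcing $\varphi$ constant. I would begin with a conformal reduction: by (\ref{eq1.9}) one has $P_{e^{2w}g}\varphi=e^{-4w}P_{g}\varphi$, so $\int_{M}\varphi P_{g}\varphi\,d\mu_{g}$ depends only on the conformal class, and $\kappa_{g}$ is a conformal invariant; since $L_{g}>0$, solving the Yamabe problem lets us assume henceforth that $R_{g}\equiv R_{0}$ is a positive constant (so $Y(M,[g])=R_{0}\,\mathrm{Vol}(M,g)^{1/2}$), with $\kappa_{g}\geq0$ unaffected.

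Writing $E=Rc-\frac14Rg$ for the traceless Ricci tensor and $N=\nabla^{2}\varphi-\frac14(\Delta\varphi)g$ for the traceless Hessian, integration by parts in (\ref{eq1.6}), the Bochner identity $\int_{M}(\Delta\varphi)^{2}=\int_{M}|\nabla^{2}\varphi|^{2}+\int_{M}Rc(\nabla\varphi,\nabla\varphi)$ and the pointwise split $|\nabla^{2}\varphi|^{2}=|N|^{2}+\frac14(\Delta\varphi)^{2}$ recast the quadratic form as
\[
\int_{M}\varphi P\varphi\,d\mu=\frac43\int_{M}|N|^{2}\,d\mu-\frac23\int_{M}E(\nabla\varphi,\nabla\varphi)\,d\mu+\frac12\int_{M}R|\nabla\varphi|^{2}\,d\mu.
\]
Only the middle term lacks a sign. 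I would control it via the Gauss--Bonnet constraint: rewriting (\ref{eq1.4}) in dimension $4$ as $Q=-\Delta J-\frac12|E|^{2}+\frac32J^{2}$ and integrating gives $\kappa_{g}=-\frac12\int_{M}|E|^{2}\,d\mu+\frac1{24}\int_{M}R^{2}\,d\mu$, so $\kappa_{g}\geq0$ yields the key bound $\int_{M}|E|^{2}\,d\mu\leq\frac1{12}\int_{M}R^{2}\,d\mu=\frac1{12}R_{0}^{2}\,\mathrm{Vol}(M,g)$.

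The heart of the proof is to absorb $\frac23\int_{M}E(\nabla\varphi,\nabla\varphi)\,d\mu$ into $\frac43\int_{M}|N|^{2}+\frac12\int_{M}R|\nabla\varphi|^{2}$. One bounds it by $\frac23\|E\|_{L^{2}}\|\nabla\varphi\|_{L^{4}}^{2}$, inserts the Gauss--Bonnet estimate for $\|E\|_{L^{2}}$, and controls $\|\nabla\varphi\|_{L^{4}}^{2}$ through the conformal Sobolev inequality of $(M,[g])$ applied to $u=|\nabla\varphi|$ --- this is exactly where $L_{g}>0$, i.e. $Y(M,[g])>0$, enters, via $\int_{M}(6|\nabla u|^{2}+R_{0}u^{2})\geq Y(M,[g])\|u\|_{L^{4}}^{2}$ and the Kato inequality $|\nabla|\nabla\varphi||\leq|\nabla^{2}\varphi|$ --- and then feeds the outcome back through the identities above. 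I expect the main obstacle to be precisely this balancing of sharp constants: the coefficient $\frac1{12}$ coming out of Gauss--Bonnet has to be exactly what makes the cross term absorbable. A conceptually cleaner variant, at the cost of invoking the solution of the $\sigma_{2}$-Yamabe problem in dimension four, is to pass first to a conformal representative with $\sigma_{2}(A)>0$ everywhere; as $\sigma_{2}(A)=\frac38J^{2}-\frac18|E|^{2}$ here, this gives the \emph{pointwise} bound $|E|^{2}<3J^{2}=\frac1{12}R^{2}$, whence $\frac23|E(\nabla\varphi,\nabla\varphi)|\leq\frac23|E|\,|\nabla\varphi|^{2}<\frac1{3\sqrt3}R|\nabla\varphi|^{2}<\frac12R|\nabla\varphi|^{2}$, making the displayed formula strictly positive off the constants.

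For the kernel, note that $\frac1{3\sqrt3}<\frac12$ strictly, so once the cross term is dominated as above --- whether by the pointwise $\sigma_{2}$ inequality or with the surplus in $\frac12\int_{M}R|\nabla\varphi|^{2}$ available when $\kappa_{g}>0$ --- equality $\int_{M}\varphi P\varphi\,d\mu=0$ forces $\int_{M}R|\nabla\varphi|^{2}=0$, hence $\nabla\varphi\equiv0$ and $\varphi$ constant. The genuinely delicate point is the endpoint $\kappa_{g}=0$: there one must either produce a conformal gauge with $\sigma_{2}(A)\geq0$ and $R>0$, or trace the equality case of the Cauchy--Schwarz, Kato and Sobolev steps and show it forces $(M,g)$ to be Einstein with constant positive scalar curvature --- in which case, an Einstein metric having constant scalar curvature, $P=(-\Delta)(-\Delta+\frac{R_{0}}{6})$ is a product of two positive second-order operators whose kernel is plainly the constants, completing the proof.
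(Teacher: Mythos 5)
The paper does not prove Theorem \ref{thm2.3} at all --- it is quoted from \cite{G2} --- so your proposal has to stand on its own, and while the preliminaries are correct (conformal invariance of the quadratic form in dimension four, reduction to a Yamabe metric with $R\equiv R_{0}>0$, the identity $\int_{M}\varphi P\varphi \,d\mu =\frac{4}{3}\int_{M}\left\vert N\right\vert ^{2}d\mu -\frac{2}{3}\int_{M}E\left( \nabla \varphi ,\nabla \varphi \right) d\mu +\frac{1}{2}\int_{M}R\left\vert \nabla \varphi \right\vert ^{2}d\mu $, and the Gauss--Bonnet bound $\int_{M}\left\vert E\right\vert ^{2}d\mu \leq \frac{1}{12}\int_{M}R^{2}d\mu $), the step you yourself call the heart of the proof is exactly where the theorem lives, and with the ingredients you list it does not close. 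Normalize $\mu \left( M\right) =1$, so $Y\left( g\right) =R_{0}$ and $\kappa _{g}\geq 0$ gives $\left\Vert E\right\Vert _{L^{2}}\leq \frac{R_{0}}{2\sqrt{3}}$; set $a=\int_{M}\left\vert N\right\vert ^{2}d\mu $, $c=\int_{M}E\left( \nabla \varphi ,\nabla \varphi \right) d\mu $, $p=\int_{M}\left\vert \nabla \varphi \right\vert ^{2}d\mu $, $s=\left\Vert \nabla \varphi \right\Vert _{L^{4}}^{2}$. Even granting the sharp pointwise bound $E\left( \nabla \varphi ,\nabla \varphi \right) \leq \frac{\sqrt{3}}{2}\left\vert E\right\vert \left\vert \nabla \varphi \right\vert ^{2}$ (better than the Cauchy--Schwarz you use), your chain reads $c\leq \frac{\sqrt{3}}{2}\left\Vert E\right\Vert _{L^{2}}s\leq \frac{R_{0}}{4}s$, Kato plus the Yamabe--Sobolev inequality give $R_{0}s\leq 6\int_{M}\left\vert \nabla ^{2}\varphi \right\vert ^{2}d\mu +R_{0}p$, and Bochner gives $\int_{M}\left\vert \nabla ^{2}\varphi \right\vert ^{2}d\mu =\frac{4}{3}a+\frac{1}{3}c+\frac{R_{0}}{12}p$; combining, $c\leq 2a+\frac{c}{2}+\frac{3}{8}R_{0}p$, i.e. $c\leq 4a+\frac{3}{4}R_{0}p$, and feeding this back produces only
\begin{equation*}
\int_{M}\varphi P\varphi \,d\mu \geq \frac{4}{3}a-\frac{2}{3}\left( 4a+\frac{3}{4}R_{0}p\right) +\frac{1}{2}R_{0}p=-\frac{4}{3}a,
\end{equation*}
which is vacuous: the Hessian term returned through Kato--Sobolev comes back with a coefficient at least as large as the $\frac{4}{3}\int_{M}\left\vert N\right\vert ^{2}$ you have to spend, and the $E$-term re-enters through Bochner. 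So the ``balancing of sharp constants'' you hope for does not occur; the failure is structural, not an artifact of lossy constants, and Gursky's theorem is not a soft $L^{2}$-absorption statement --- the proof in \cite{G2} is a genuinely different and more delicate argument.

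Your fallback does not repair this. A conformal metric with $\sigma _{2}\left( A\right) >0$ pointwise can exist only when $\int_{M}\sigma _{2}\left( A\right) d\mu =\frac{\kappa _{g}}{4}>0$, so it is unavailable at the admissible endpoint $\kappa _{g}=0$; and the existence theorem you would invoke is the Chang--Gursky--Yang result \cite{CGY1}, a substantially later and deeper theorem whose proof is intertwined with the positivity theory of $P$ under precisely the present hypotheses, so as a proof of Theorem \ref{thm2.3} this route is at best anachronistic and at worst circular. (The pointwise implication you record --- $\sigma _{2}\left( A\right) >0$ and $J>0$ give $\left\vert E\right\vert <\frac{R}{2\sqrt{3}}$ and hence positivity of the displayed quadratic form off the constants --- is correct, as is the factorization $P=\left( -\Delta \right) \left( -\Delta +\frac{R_{0}}{6}\right) $ for Einstein metrics; the gap is in reaching those situations from $L_{g}>0$, $\kappa _{g}\geq 0$.) Finally, your treatment of the kernel and of the borderline case $\kappa _{g}=0$ is a list of things ``one must'' do (produce a gauge with $\sigma _{2}\geq 0$, or trace equality cases to force Einstein) rather than an argument, and that borderline case is precisely the delicate part of the theorem.
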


As an application of Theorem \ref{thm2.1} and \ref{thm2.3}, we have a
general existence result for a conformal metric of constant $Q$ curvature.
This is analogous to the existence of constant Gauss curvature metrics in
dimension two. Let us consider the following functionals%
\begin{equation}
I\left( w\right) =\int_{M}\left\vert W\right\vert ^{2}wd\mu -\frac{1}{4}%
\left( \int_{M}\left\vert W\right\vert ^{2}d\mu \right) \log \left( \frac{1}{%
\mu \left( M\right) }\int_{M}e^{4w}d\mu \right) ,  \label{eq2.4}
\end{equation}%
\begin{eqnarray}
&&II\left( w\right)  \label{eq2.5} \\
&=&\int_{M}Pw\cdot wd\mu +2\int_{M}Qwd\mu -\frac{1}{2}\left( \int_{M}Qd\mu
\right) \log \left( \frac{1}{\mu \left( M\right) }\int_{M}e^{4w}d\mu \right)
,  \notag
\end{eqnarray}%
and%
\begin{equation}
III\left( w\right) =\int_{M}J_{e^{2w}g}^{2}d\mu _{e^{2w}g}-\int_{M}J^{2}d\mu
.  \label{eq2.6}
\end{equation}%
The Euler-Lagrange equation of functional $II$ is given by%
\begin{equation}
Pw+Q-\frac{\int_{M}Qd\mu }{\int_{M}e^{4w}d\mu }e^{4w}=0.  \label{eq2.7}
\end{equation}%
Or in another word,%
\begin{equation}
Q_{e^{2w}g}=const.  \label{eq2.8}
\end{equation}%
On the other hand, the Euler-Lagrange equation for functional $III$ is%
\begin{equation}
-\Delta _{e^{2w}g}J_{e^{2w}g}=0.  \label{eq2.9}
\end{equation}%
In \cite{CY} the general functional $F=\gamma _{1}I+\gamma _{2}II+\gamma
_{3}III$ was studied.

\begin{theorem}[\protect\cite{CY}]
\label{thm2.4}If the functional $F$ satisfies%
\begin{equation}
\gamma _{2}>0,\quad \gamma _{3}>0  \label{eq2.10}
\end{equation}%
and%
\begin{equation}
\kappa =\frac{\gamma _{1}}{2}\int_{M}\left\vert W\right\vert ^{2}d\mu
+\gamma _{2}\int_{M}Qd\mu <16\pi ^{2}\gamma _{2},  \label{eq2.11}
\end{equation}%
then there exists a minimizer for%
\begin{equation}
\inf_{w\in H^{2}\left( M\right) }F\left( w\right) .  \label{eq2.12}
\end{equation}%
Any minimizer must be smooth. If $w$ is a minimizer and we write $\widetilde{%
g}=e^{2w}g$, then%
\begin{equation}
\frac{\gamma _{1}}{2}\left\vert \widetilde{W}\right\vert _{\widetilde{g}%
}^{2}+\gamma _{2}\widetilde{Q}-\gamma _{3}\widetilde{\Delta }\widetilde{J}=%
\frac{\kappa }{\widetilde{\mu }\left( M\right) }.  \label{eq2.13}
\end{equation}%
Moreover for any $\varphi \in H^{2}\left( M\right) $ with%
\begin{equation}
\int_{M}\varphi d\widetilde{\mu }=0,  \label{eq2.14}
\end{equation}%
we have%
\begin{eqnarray}
&&\gamma _{2}\int_{M}\widetilde{P}\varphi \cdot \varphi d\widetilde{\mu }%
+\gamma _{3}\int_{M}\left[ \left( \widetilde{\Delta }\varphi +\left\vert 
\widetilde{\nabla }\varphi \right\vert _{\widetilde{g}}^{2}\right) ^{2}-2%
\widetilde{J}\left\vert \widetilde{\nabla }\varphi \right\vert _{\widetilde{g%
}}^{2}\right] d\widetilde{\mu }  \label{eq2.15} \\
&\geq &\frac{\kappa }{2}\log \left( \frac{1}{\widetilde{\mu }\left( M\right) 
}\int_{M}e^{4\varphi }d\widetilde{\mu }\right) .  \notag
\end{eqnarray}%
Here $\int_{M}\widetilde{P}\varphi \cdot \varphi d\widetilde{\mu }$ is
understood in distribution sense.
\end{theorem}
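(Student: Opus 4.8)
The plan is to run the direct method of the calculus of variations: show that $F$ is bounded below and coercive on a complement of the constants in $H^{2}(M)$, produce a minimizer by weak lower semicontinuity, then read off the Euler--Lagrange equation and bootstrap regularity; the inequality (\ref{eq2.15}) will drop out of minimality together with a cocycle identity. First one checks that $F(w+c)=F(w)$ for every constant $c$ --- immediate from the definitions of $I$, $II$, $III$ and the pointwise conformal invariance of $|W|^{2}d\mu$ --- so it suffices to work on $V=\{w\in H^{2}(M):\int_{M}w\,d\mu=0\}$. Rewriting $I$ and $II$ through $w-\bar w$ and adding, the linear terms recombine and the weight of the logarithm turns out to be exactly $\frac{\gamma_{1}}{4}\int_{M}|W|^{2}d\mu+\frac{\gamma_{2}}{2}\int_{M}Q\,d\mu=\frac{\kappa}{2}$, so that on $V$,
\[
\gamma_{1}I(w)+\gamma_{2}II(w)=\gamma_{2}\int_{M}Pw\cdot w\,d\mu+\int_{M}\bigl(\gamma_{1}|W|^{2}+2\gamma_{2}Q\bigr)w\,d\mu-\frac{\kappa}{2}\log\Bigl(\frac{1}{\mu(M)}\int_{M}e^{4w}\,d\mu\Bigr).
\]
The single analytic input is the sharp Adams--Moser--Trudinger (Beckner type) inequality, available because $P\geq0$ with kernel the constant functions here (cf.\ \thmref{thm2.3}):
\[
\log\Bigl(\frac{1}{\mu(M)}\int_{M}e^{4w}\,d\mu\Bigr)\leq\frac{1}{8\pi^{2}}\Bigl(\int_{M}Pw\cdot w\,d\mu+2\int_{M}Qw\,d\mu\Bigr)+C(M,g),\qquad w\in V.
\]
Feeding this into the previous identity gives, for $w\in V$,
\[
\gamma_{1}I(w)+\gamma_{2}II(w)\ \geq\ \Bigl(\gamma_{2}-\frac{\kappa}{16\pi^{2}}\Bigr)\int_{M}Pw\cdot w\,d\mu-C\|w\|_{L^{2}}-C',
\]
and $\gamma_{2}-\kappa/(16\pi^{2})>0$ is precisely hypothesis (\ref{eq2.11}).

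For the third functional I would use the dimension-four identity $III(w)=\int_{M}(\Delta w+|\nabla w|^{2})^{2}d\mu-2\int_{M}J(\Delta w+|\nabla w|^{2})d\mu$ (from $J_{e^{2w}g}=e^{-2w}(J-\Delta w-|\nabla w|^{2})$ when $n=4$); Cauchy--Schwarz on the second term gives $III(w)\geq-\|J\|_{L^{2}}^{2}$, so $\gamma_{3}III$ is bounded below because $\gamma_{3}>0$. Since $w\mapsto\int_{M}Pw\cdot w\,d\mu=\|\Delta w\|_{L^{2}}^{2}-\int_{M}(4A(\nabla w,\nabla w)-2J|\nabla w|^{2})\,d\mu$ is a nonnegative quadratic form that is a compact perturbation of $\|\Delta w\|_{L^{2}}^{2}$ and is positive on $V$, it is coercive on $V$ and equivalent there to $\|w\|_{H^{2}}^{2}$; hence $F$ is bounded below and coercive on $V$. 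Now take a minimizing sequence $w_{k}\in V$: coercivity bounds $\|w_{k}\|_{H^{2}}$, so after passing to a subsequence $w_{k}\rightharpoonup w$ in $H^{2}$, strongly in $W^{1,p}$ for $p<4$ and a.e.; Adams' inequality with Young's inequality makes $\{e^{4w_{k}}\}$ bounded in every $L^{p}$, hence equi-integrable, so $e^{4w_{k}}\to e^{4w}$ in $L^{1}$ and the logarithmic terms converge, the linear terms converge by weak $L^{2}$ convergence, and $\int_{M}Pw_{k}\cdot w_{k}\,d\mu$ and $III(w_{k})$ are weakly lower semicontinuous (a weakly lower semicontinuous $L^{2}$ norm square plus terms continuous on $H^{1}$). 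Therefore $F(w)\leq\liminf_{k}F(w_{k})=\inf_{V}F$, so $w\in V$ is a minimizer, and by the invariance $F(w+c)=F(w)$ it minimizes $F$ over all of $H^{2}(M)$.

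Next, the first variation $\frac{d}{dt}\big|_{t=0}F(w+t\psi)=0$, rewritten with $|W|_{\widetilde g}^{2}=e^{-4w}|W|^{2}$, the transformation law (\ref{eq1.10}) for $\widetilde Q$, and $\widetilde\Delta\widetilde J$ expressed through $w$, yields $\frac{\gamma_{1}}{2}|\widetilde W|^{2}+\gamma_{2}\widetilde Q-\gamma_{3}\widetilde\Delta\widetilde J=\mathrm{const}$ in the distribution sense; testing against $1$ (the $\widetilde\Delta\widetilde J$ term integrates to zero, and $\int_{M}|W|^{2}d\mu$, $\int_{M}Q\,d\mu$ are conformally invariant) identifies the constant as $\kappa/\widetilde\mu(M)$, which is (\ref{eq2.13}). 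Multiplied out, (\ref{eq2.13}) is a fourth order equation for $w$ whose leading part is $(\gamma_{2}+\gamma_{3})\Delta^{2}w$ (with the positive factor $e^{-4w}$), the right-hand side carrying the critical term $e^{4w}$ and the genuinely nonlinear lower order terms coming from $\gamma_{3}III$ --- which appear in divergence form, e.g.\ $\func{div}(\Delta w\,\nabla w)$, $\func{div}(\mathrm{Hess}\,w\,(\nabla w,\cdot))$, $\func{div}(|\nabla w|^{2}\nabla w)$. One then bootstraps: $w\in H^{2}$ already gives $e^{4w}\in L^{p}$ for all $p$, so once $w$ is improved to $W^{2,p}$ for a single $p>2$ the gain in integrability of $\nabla w$ puts each divergence term into $W^{-1,r}$ with $r>\tfrac43$ and the iteration closes, yielding $w\in C^{\infty}$. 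The delicate point --- which I expect to be the main obstacle --- is exactly this first step past the borderline exponent: with only $w\in W^{2,2}$ the cubic/critical divergence nonlinearities lie in $W^{-1,4/3}$ and a naive bootstrap stalls, so one must exploit their precise structure, for instance via an $\varepsilon$-regularity (small-energy) estimate, using that $\int_{M}(|\nabla w|^{4}+|\Delta w|^{2})<\infty$ is small off finitely many points (where a minimizer cannot concentrate, by the strictness in (\ref{eq2.11})) together with the exponential integrability of $w$.

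Finally, (\ref{eq2.15}) follows from minimality of $w$ and a cocycle identity. Each of $I$, $II$, $III$ --- hence $F$ --- satisfies $F_{g}(u+v)=F_{g}(u)+F_{e^{2u}g}(v)$: for $III$ this telescopes from $III_{g}(w)=\int_{M}J_{e^{2w}g}^{2}\,d\mu_{e^{2w}g}-\int_{M}J_{g}^{2}\,d\mu_{g}$, and for $I$ and $II$ it follows directly from the conformal transformation laws (using that $|W|^{2}d\mu$ is pointwise conformally invariant and (\ref{eq1.10})). Taking $u=w$ and $v=\varphi$, the minimality of $w$ gives $F_{\widetilde g}(\varphi)\geq0$ for every $\varphi\in H^{2}(M)$, where $\widetilde g=e^{2w}g$. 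When in addition $\int_{M}\varphi\,d\widetilde\mu=0$, the linear-in-$\varphi$ part of $F_{\widetilde g}(\varphi)$ equals (after integrating $-2\gamma_{3}\widetilde J\,\widetilde\Delta\varphi$ by parts) $\int_{M}\bigl(\gamma_{1}|\widetilde W|^{2}+2\gamma_{2}\widetilde Q-2\gamma_{3}\widetilde\Delta\widetilde J\bigr)\varphi\,d\widetilde\mu=\frac{2\kappa}{\widetilde\mu(M)}\int_{M}\varphi\,d\widetilde\mu=0$ by (\ref{eq2.13}), and the coefficient of the logarithm is $\frac{\gamma_{1}}{4}\int_{M}|\widetilde W|^{2}d\widetilde\mu+\frac{\gamma_{2}}{2}\int_{M}\widetilde Q\,d\widetilde\mu=\frac{\kappa}{2}$ by conformal invariance; what remains of $F_{\widetilde g}(\varphi)\geq0$ is exactly (\ref{eq2.15}).
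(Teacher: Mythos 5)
Your overall architecture --- reducing to zero-mean functions via the invariance $F(w+c)=F(w)$, an Adams-type inequality for boundedness and coercivity, the direct method, the Euler--Lagrange computation for (\ref{eq2.13}), and the cocycle identity $F_{g}(w+\varphi)=F_{g}(w)+F_{e^{2w}g}(\varphi)$ to deduce (\ref{eq2.15}) --- is exactly the route of \cite{CY} (the present paper gives no proof of Theorem \ref{thm2.4}; it only cites \cite{CY} and remarks that the Adams inequality is the key ingredient), and your derivations of (\ref{eq2.13}) and (\ref{eq2.15}) are correct. The genuine gap is in the existence/coercivity step: Theorem \ref{thm2.4} does \emph{not} assume $P\geq 0$ with kernel the constants --- that hypothesis appears only in Theorem \ref{thm2.5} --- and you cannot import it from Theorem \ref{thm2.3}, whose hypotheses $L_{g}>0$ and $\kappa_{g}\geq 0$ are not among (\ref{eq2.10})--(\ref{eq2.11}). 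Consequently neither the $P$-form of the Adams inequality (Theorem \ref{thm2.6}) nor your claim that $\int_{M}Pw\cdot w\,d\mu$ is nonnegative, kills only constants, and is equivalent to $\left\Vert w\right\Vert _{H^{2}}^{2}$ on the zero-mean subspace is available; under the stated hypotheses $P$ may be indefinite or have nonconstant kernel, and then your chain of inequalities collapses. (There is also a minor sign point: when $\kappa<0$ one should use Jensen's inequality for the logarithm rather than multiplying the Adams bound by $-\kappa/2$, but that case is harmless.)

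The missing idea is precisely the term you discarded: the reason $\gamma_{3}>0$ is assumed is that $III$ supplies the coercivity that $II$ alone cannot. Bounding $\gamma_{3}III\geq -\gamma_{3}\left\Vert J\right\Vert _{L^{2}}^{2}$ throws away the quartic term $\int_{M}\left( \Delta w+\left\vert \nabla w\right\vert ^{2}\right) ^{2}d\mu$, which is what controls $\left\Vert \nabla w\right\Vert _{L^{4}}^{4}$ (since $\left\Vert \nabla w\right\Vert _{L^{4}}^{2}\leq \left\Vert \Delta w+\left\vert \nabla w\right\vert ^{2}\right\Vert _{L^{2}}+\left\Vert \Delta w\right\Vert _{L^{2}}$) and hence absorbs, at the cost of an arbitrarily small multiple of $\left\Vert \Delta w\right\Vert _{L^{2}}^{2}$, the sign-indefinite curvature terms quadratic in $\nabla w$ hidden in $\int_{M}Pw\cdot w\,d\mu=\int_{M}(\Delta w)^{2}d\mu -\int_{M}\left[ 4A\left( \nabla w,\nabla w\right) -2J\left\vert \nabla w\right\vert ^{2}\right] d\mu$. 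With that absorption one invokes the Adams--Fontana inequality in the Laplacian form, $\log \left( \frac{1}{\mu \left( M\right) }\int_{M}e^{4w}d\mu \right) \leq \frac{1}{8\pi ^{2}}\left\Vert \Delta w\right\Vert _{L^{2}}^{2}+C$ for zero-mean $w$ (see \cite{F}; no spectral hypothesis on $P$ is needed), and the margin demanded of the leading coefficient is exactly $\gamma_{2}-\kappa /(16\pi ^{2})>0$, i.e.\ (\ref{eq2.11}); this yields the $H^{2}$ bound on minimizing sequences, after which your lower semicontinuity, Euler--Lagrange and cocycle arguments go through. Your regularity discussion is likewise only a sketch --- you correctly identify the borderline bootstrap as the obstacle without resolving it --- but the decisive defect is the unjustified positivity of $P$, which, if assumed, would essentially reduce the statement to Theorem \ref{thm2.5} and miss the point of Theorem \ref{thm2.4}.
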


For the functional $II$, we have a similar existence result.

\begin{theorem}[\protect\cite{CY}]
\label{thm2.5}If%
\begin{equation}
\kappa _{g}=\int_{M}Qd\mu <16\pi ^{2},  \label{eq2.16}
\end{equation}%
$P\geq 0$ and the kernel of $P$ consists only of constant functions, then%
\begin{equation}
\inf_{w\in H^{2}\left( M\right) }II\left( w\right)  \label{eq2.17}
\end{equation}%
is achieved. Any minimizer must be smooth. If $w$ is a minimizer and we
write $\widetilde{g}=e^{2w}g$, then%
\begin{equation}
\widetilde{Q}=\frac{\kappa _{g}}{\widetilde{\mu }\left( M\right) }.
\label{eq2.18}
\end{equation}%
Moreover for any $\varphi \in H^{2}\left( M\right) $ with%
\begin{equation}
\int_{M}\varphi d\widetilde{\mu }=0,  \label{eq2.19}
\end{equation}%
we have%
\begin{equation}
\int_{M}\widetilde{P}\varphi \cdot \varphi d\widetilde{\mu }\geq \frac{%
\kappa _{g}}{2}\log \left( \frac{1}{\widetilde{\mu }\left( M\right) }%
\int_{M}e^{4\varphi }d\widetilde{\mu }\right) .  \label{eq2.20}
\end{equation}%
Here $\int_{M}\widetilde{P}\varphi \cdot \varphi d\widetilde{\mu }$ is
understood in distribution sense.
\end{theorem}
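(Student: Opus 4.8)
The plan is to obtain the minimizer by the direct method, using the sharp second-order Adams--Moser--Trudinger (Beckner-type) inequality attached to the Paneitz operator, and then to read off the Euler--Lagrange equation, the regularity, and the inequality (\ref{eq2.20}) from the minimizing property together with the conformal covariance laws (\ref{eq1.9})--(\ref{eq1.10}). Since $II(w+c)=II(w)$ for every constant $c$, it suffices to minimize over $\mathcal{H}=\{w\in H^{2}(M):\int_{M}w\,d\mu=0\}$. Under the hypotheses $P\geq 0$ and $\ker P=\{\text{constants}\}$, the quadratic form $w\mapsto\int_{M}Pw\cdot w\,d\mu$ is equivalent to $\|w\|_{H^{2}}^{2}$ on $\mathcal{H}$: a normalized sequence with $\int_{M}Pw_{k}\cdot w_{k}\,d\mu\to 0$ would, along a subsequence, converge weakly in $H^{2}$ to an element of $\ker P\cap\mathcal{H}=\{0\}$, and combined with the elliptic estimate $\|\Delta\,\cdot\,\|_{L^{2}}\sim\|\,\cdot\,\|_{H^{2}}$ on $\mathcal{H}$ this forces $\|w_{k}\|_{H^{2}}\to 0$, a contradiction. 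The decisive analytic input is the sharp inequality: there is a constant $C=C(M,g)$ with
\begin{equation*}
\log\Bigl(\frac{1}{\mu(M)}\int_{M}e^{4w}\,d\mu\Bigr)\leq\frac{1}{8\pi^{2}}\int_{M}Pw\cdot w\,d\mu+C\qquad\text{for all }w\in\mathcal{H}.
\end{equation*}
This is the closed-manifold analogue of Beckner's inequality on $S^{4}$; it rests on $P\geq 0$, the triviality of $\ker P$ modulo constants, and Adams' sharp exponential bound for $\|\Delta w\|_{L^{2}}$. I expect this to be the main obstacle: the constant $8\pi^{2}$ is precisely what makes the threshold $16\pi^{2}$ appear, and proving it requires a localization/Green's-function argument rather than a soft estimate.

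\emph{Existence.} For $w\in\mathcal{H}$ the key estimate gives
\begin{equation*}
II(w)\geq\Bigl(1-\frac{\kappa_{g}}{16\pi^{2}}\Bigr)\int_{M}Pw\cdot w\,d\mu+2\int_{M}Qw\,d\mu-\frac{|\kappa_{g}|\,C}{2},
\end{equation*}
and since $\kappa_{g}<16\pi^{2}$ the leading coefficient is positive (if $\kappa_{g}\leq 0$ the logarithmic term is already nonnegative on $\mathcal{H}$ by Jensen's inequality and the argument is easier); the linear term is absorbed by Cauchy--Schwarz and the Poincar\'e inequality, so $II$ is bounded below and coercive on $\mathcal{H}$. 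Given a minimizing sequence $w_{k}$, it is bounded in $H^{2}$; pass to a subsequence with $w_{k}\rightharpoonup w$ in $H^{2}$, $w_{k}\to w$ in every $L^{p}$ and almost everywhere. Applying the key estimate to $qw_{k}$ shows $\{e^{4w_{k}}\}$ is bounded in every $L^{q}$, hence uniformly integrable, so $e^{4w_{k}}\to e^{4w}$ in $L^{1}$. Since $\int_{M}Pw\cdot w\,d\mu$ is weakly lower semicontinuous (a nonnegative quadratic form), $\int_{M}Qw\,d\mu$ is weakly continuous, and the logarithmic term is continuous under this convergence, we get $II(w)\leq\liminf_{k}II(w_{k})$, so $w$ is a minimizer. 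Every minimizer differs from $w$ by a constant, so the remaining claims may be checked on any minimizer.

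\emph{Euler--Lagrange equation, regularity, and (\ref{eq2.20}).} The first variation $\frac{d}{dt}\big|_{t=0}II(w+t\varphi)=0$ for all $\varphi\in H^{2}(M)$ gives, weakly, $Pw+Q=\frac{\kappa_{g}}{\int_{M}e^{4w}\,d\mu}\,e^{4w}$; dividing by $e^{4w}$ and invoking (\ref{eq1.10}) yields $\widetilde{Q}=\kappa_{g}/\widetilde{\mu}(M)$, i.e.\ (\ref{eq2.18}). Writing this as $\Delta^{2}w=(\text{smooth})\cdot\nabla^{2}w+(\text{smooth})\cdot\nabla w+(\text{smooth})+c\,e^{4w}$ and using the key estimate to place $e^{4w}$ in every $L^{p}$, a routine elliptic bootstrap starting from $w\in H^{2}$ shows $w\in C^{\infty}(M)$, so $\widetilde{g}=e^{2w}g$ is smooth. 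Finally, for (\ref{eq2.20}) I would use the cocycle identity
\begin{equation*}
II_{g}(w+\varphi)=II_{g}(w)+II_{\widetilde{g}}(\varphi),\qquad\varphi\in H^{2}(M),
\end{equation*}
where $II_{\widetilde{g}}$ is the functional (\ref{eq2.5}) formed with $\widetilde{g}$; it is a direct computation from (\ref{eq1.9})--(\ref{eq1.10}), the self-adjointness of $P$, $d\widetilde{\mu}=e^{4w}\,d\mu$, and $\kappa_{\widetilde{g}}=\kappa_{g}$ (concretely $\int_{M}\widetilde{P}\varphi\cdot\varphi\,d\widetilde{\mu}=\int_{M}P\varphi\cdot\varphi\,d\mu$, $2\int_{M}\widetilde{Q}\varphi\,d\widetilde{\mu}=2\int_{M}(Pw+Q)\varphi\,d\mu$, and $e^{4\varphi}\,d\widetilde{\mu}=e^{4(w+\varphi)}\,d\mu$ for the logarithmic term). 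Since $w$ minimizes $II_{g}$ globally, the identity forces $II_{\widetilde{g}}(\varphi)\geq 0$ for every $\varphi\in H^{2}(M)$; and when $\int_{M}\varphi\,d\widetilde{\mu}=0$ the linear term $2\int_{M}\widetilde{Q}\varphi\,d\widetilde{\mu}$ vanishes because $\widetilde{Q}$ is the constant $\kappa_{g}/\widetilde{\mu}(M)$, leaving exactly (\ref{eq2.20}). Here $\int_{M}\widetilde{P}\varphi\cdot\varphi\,d\widetilde{\mu}$ is the natural quadratic form $\int_{M}\bigl[(\widetilde{\Delta}\varphi)^{2}-4\widetilde{A}(\widetilde{\nabla}\varphi,\widetilde{\nabla}\varphi)+2\widetilde{J}|\widetilde{\nabla}\varphi|^{2}\bigr]\,d\widetilde{\mu}$, finite for $\varphi\in H^{2}(M)$.
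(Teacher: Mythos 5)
Your proposal is correct and follows essentially the same route as the paper and \cite{CY}: direct minimization of $II$ over mean-zero functions using the sharp Adams-type inequality, which is exactly (\ref{eq2.23}) of Theorem \ref{thm2.6} supplied by the paper (so the ``main obstacle'' you flag is already available and need not be re-proved), followed by the Euler--Lagrange equation, bootstrap regularity, and the conformal cocycle identity to obtain (\ref{eq2.20}). The only loose remark is the unjustified assertion that every minimizer differs from the constructed one by a constant, but this is harmless since your Euler--Lagrange, regularity, and cocycle arguments use only minimality and hence apply verbatim to an arbitrary minimizer.
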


More results on the existence of conformal metrics with constant $Q$
curvature can be found in \cite{DM}. The main ingredient for Theorem \ref%
{thm2.4} and \ref{thm2.5} is the following version of Adams inequality (\cite%
{Ad}):

\begin{theorem}[\protect\cite{BCY,F}]
\label{thm2.6}Let $\left( M,g\right) $ be a smooth compact $4$-dimensional
Riemannian manifold with $P\geq 0$ and kernel of $P$ consists only of
constant functions, then for any $w\in H^{2}\left( M\right) $ with%
\begin{equation}
\int_{M}wd\mu =0,  \label{eq2.21}
\end{equation}%
we have%
\begin{equation}
\int_{M}\exp \left( 32\pi ^{2}\frac{w^{2}}{\int_{M}Pw\cdot wd\mu }\right)
d\mu \leq c\left( M,g\right) <\infty .  \label{eq2.22}
\end{equation}%
In particular%
\begin{equation}
\log \left( \frac{1}{\mu \left( M\right) }\int_{M}e^{4w}d\mu \right) \leq 
\frac{1}{8\pi ^{2}}\int_{M}Pw\cdot wd\mu +c\left( M,g\right) .
\label{eq2.23}
\end{equation}%
Here $\int_{M}Pw\cdot wd\mu $ is understood in distribution sense.
\end{theorem}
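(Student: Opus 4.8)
The plan is to derive the inequality from Adams' sharp exponential estimate for $W^{2,2}$ \cite{Ad} --- more precisely, from its Riemannian incarnation \cite{F,BCY} --- by representing $w$ as a potential of $P^{1/2}w$ against (a square root of) the Green's function of $P$.

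First I would record what the hypotheses give. In dimension $4$ one has $P1=\frac{n-4}{2}Q=0$, so $P$ is a nonnegative, formally self-adjoint elliptic operator of order $4$ with principal symbol $|\xi|_g^{4}$ and with kernel equal to the constants; in particular $\int_M Pw\,d\mu=0$, so $P$ maps $H^{4}(M)$ into the space of mean-zero functions. On $V=\{w\in H^{2}(M):\int_M w\,d\mu=0\}$ the quadratic form $w\mapsto\int_M Pw\cdot w\,d\mu$ is positive definite, and since $P-\Delta^{2}$ is second order (its contribution to the form is a quadratic expression in $\nabla w$ with bounded curvature coefficients) it is equivalent to $\|w\|_{H^{2}}^{2}$ on $V$: if $\int_M Pw_k\cdot w_k\,d\mu\to0$ with $w_k\in V$, $\|w_k\|_{H^{2}}=1$, then along a subsequence $w_k\rightharpoonup w$ in $H^{2}$ and $w_k\to w$ in $H^{1}$, weak lower semicontinuity of $\int_M(\Delta w)^{2}\,d\mu$ forces $w\in\ker P\cap V=\{0\}$, and then the elliptic estimate $\|w_k\|_{H^{2}}\lesssim\|\Delta w_k\|_{L^{2}}+\|w_k\|_{L^{2}}\to0$ contradicts $\|w_k\|_{H^{2}}=1$. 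Hence $P$ is invertible from the mean-zero functions onto themselves. Finally (\ref{eq2.23}) follows formally from (\ref{eq2.22}): with $t=\int_M Pw\cdot w\,d\mu$ the elementary bound $4w\le 32\pi^{2}w^{2}/t+t/(8\pi^{2})$ gives $\int_M e^{4w}\,d\mu\le e^{t/(8\pi^{2})}\int_M\exp(32\pi^{2}w^{2}/t)\,d\mu$, which is $\le c(M,g)\,e^{t/(8\pi^{2})}$ by (\ref{eq2.22}).

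Next I would pass to a potential representation. Making $P$ invertible by adding the $L^{2}$-projection onto the constants, its complex powers are pseudodifferential operators; set $h=P^{1/2}w$, so that $\int_M Pw\cdot w\,d\mu=\|h\|_{L^{2}(M)}^{2}$ and $w=P^{-1/2}h$. Since $P^{-1/2}$ has order $-2$ and the same principal symbol $|\xi|_g^{-2}$ as $(-\Delta)^{-1}$, its Schwartz kernel obeys
\[
K(x,y)=\frac{1}{4\pi^{2}\,d(x,y)^{2}}\bigl(1+o(1)\bigr)+R(x,y),\qquad |R(x,y)|\lesssim\frac{1}{d(x,y)},
\]
so $x\mapsto R(x,y)$ is bounded in $L^{q}(M)$, uniformly in $y$, for every $q<4$. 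Thus $w(x)=\frac{1}{4\pi^{2}}\int_M d(x,y)^{-2}h(y)\,d\mu(y)+\rho(x)$, with $\rho$ a potential of $h$ against the milder kernel $R$. Covering $M$ by finitely many geodesic balls on which $d(x,y)$ is comparable to a Euclidean distance and freezing the $o(1)$, I would apply Adams' sharp lemma on exponential integrability of potentials whose kernel has singular order $2=n/p'$ (here $n=4$, $p'=2$); its optimal constant is $n/\omega_{n-1}=2/\pi^{2}$, so whenever $\|h\|_{L^{2}}\le1$,
\[
\int_M\exp\!\Bigl(\tfrac{2}{\pi^{2}}\bigl(4\pi^{2}w\bigr)^{2}\Bigr)\,d\mu=\int_M\exp\bigl(32\pi^{2}w^{2}\bigr)\,d\mu\le c(M,g).
\]
Rescaling $w$ by $(\int_M Pw\cdot w\,d\mu)^{1/2}$ yields (\ref{eq2.22}), the potential $\rho$ and the $o(1)$ contributing only a bounded amount, absorbed into $c(M,g)$.

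The main obstacle is this last reduction: one must run Adams' lemma so that the exponent is exactly $32\pi^{2}$ and not $32\pi^{2}(1-\varepsilon)$, in spite of (i) the localization to Euclidean charts with a partition of unity and (ii) the lower order remainder $R$ (and the $o(1)$) in the kernel $K$ --- neither of which may be permitted to spoil the sharp constant. This robustness is precisely the substance of the manifold Adams inequalities of \cite{F} and \cite{BCY}: the argument proceeds through O'Neil's inequality for the decreasing rearrangement of $K*h$ together with a careful choice of truncation radius, and it uses the identification of the leading coefficient $1/(4\pi^{2})$ of $K$ --- equivalently, the fact that $P$ and $\Delta^{2}$ share the principal symbol $|\xi|_g^{4}$. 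The functional-analytic steps of the first two paragraphs are, by contrast, routine.
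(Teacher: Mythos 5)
Your proposal is correct and takes essentially the same route the paper intends: the paper gives no independent proof of Theorem \ref{thm2.6}, only the remark that it follows by adapting Adams' scheme (O'Neil's inequality \cite{O} together with the Adams--Garsia calculus lemma) as in \cite{BCY,F}, and your reduction via the pseudodifferential square root $P^{1/2}$, whose kernel has leading singularity $\frac{1}{4\pi^{2}d(x,y)^{2}}$, is the standard implementation of that scheme. As you yourself note, the genuinely hard point---that the localization and the lower order kernel remainder do not degrade the constant $32\pi^{2}$---is exactly what the cited manifold versions of Adams' inequality supply, so deferring it to \cite{F,BCY} is consistent with the paper's own treatment.
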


Adams inequality was discovered in \cite{Ad} with the motivation of
simplifying the original proof in \cite{M}. In particular a higher order
sharp inequality was derived through the O'Neil inequality for convolution
operator (see \cite{O}) and an one dimensional calculus lemma due to
Adams-Garsia. Theorem \ref{thm2.6} can be proven by modifying O'Neil
inequality and the calculus lemma.

For some geometrical and topological applications of these related equations
we refer the readers to \cite{CGY1, CGY2, G1}.

\section{Dimension at least $5$\label{sec3}}

The analysis of $Q$ curvature and Paneitz operator in dimension greater than 
$4$ has some similarity to the analysis of scalar curvature and conformal
Laplacian operator in dimension greater than $2$. The research related to
Yamabe problem serves as a nice model for asking interesting questions in
the study of Paneitz operator. However due to the fact second order
differential equations are much better understood than higher order
differential equations, sometime the analogous problem for $Q$ curvature can
be more challenging.

Based on the fact the first eigenfunction of conformal Laplacian operator
can always be chosen as positive everywhere, it was observed in \cite{KW}
that in a fixed conformal class, we can always find a metric whose scalar
curvature is only of one sign i.e. the scalar curvature is either strictly
positive, or identically zero, or strictly negative.

\begin{problem}
\label{prob3.1}Let $\left( M,g\right) $ be a smooth compact Riemannian
manifold with dimension $n\geq 5$, can we always find a conformal metric $%
\widetilde{g}$ such that $\widetilde{Q}$ is either strictly positive, or
identically zero, or strictly negative?
\end{problem}

This seems to be a difficult question. One of the obstacle is fourth order
symmetric elliptic operators can have no positive first eigenfunction at
all. Indeed let $M$ be any smooth compact Riemannian manifold, $\lambda $ be
the smallest positive eigenvalue of $-\Delta $, then the first eigenfunction
of $\left( -\Delta \right) ^{2}+2\lambda \Delta $ must change sign. Though
the answer to Problem \ref{prob3.1} remains mysterious, partial solution to
a related problem was found recently in \cite{HY4}. Recall on a smooth
compact Riemannian manifold $\left( M,g\right) $ with dimension greater than 
$2$, we have%
\begin{equation*}
\exists \widetilde{g}\in \left[ g\right] \text{ with }\widetilde{R}%
>0\Longleftrightarrow \lambda _{1}\left( L_{g}\right) >0.
\end{equation*}%
Here $\left[ g\right] $ denotes the conformal class of metrics associated
with $g$. The same statement remains true if we replace "$>$" by "$<$" or "$%
= $" (see \cite{LP}). It is worth pointing out the sign of $\lambda
_{1}\left( L_{g}\right) $ is a conformal invariant. In particular the above
statement gives a conformal invariant condition which is equivalent to the
existence of a conformal metric with positive scalar curvature.

\begin{problem}
\label{prob3.2}Let $\left( M,g\right) $ be a smooth compact Riemannian
manifold with dimension $n\geq 5$, can we find a conformal invariant
condition which is equivalent to the existence of a conformal metric with
positive $Q$ curvature? Same questions can be asked when "positive" is
replaced by "negative" or "zero".
\end{problem}

\cite{HY4} gives a partial answer to this problem under the assumption the
Yamabe invariant $Y\left( g\right) >0$.

\begin{theorem}[\protect\cite{HY4}]
\label{thm3.1}Let $n\geq 5$ and $\left( M^{n},g\right) $ be a smooth compact
Riemannian manifold with Yamabe invariant $Y\left( g\right) >0$, then the
following statements are equivalent

\begin{enumerate}
\item $\exists \widetilde{g}\in \left[ g\right] $ with $\widetilde{Q}>0$.

\item $\ker P_{g}=0$ and the Green's function of Paneitz operator $%
G_{P}\left( p,q\right) >0$ for any $p,q\in M,p\neq q$.

\item $\ker P_{g}=0$ and there exists a $p\in M$ such that $G_{P}\left(
p,q\right) >0$ for $q\in M\backslash \left\{ p\right\} $.
\end{enumerate}
\end{theorem}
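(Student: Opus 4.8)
The plan is to establish that (1), (2), (3) are all equivalent to the single auxiliary condition $(1')$: there is $\widetilde g\in[g]$ with $\widetilde Q\geq 0$ and $\widetilde Q\not\equiv 0$. Concretely I would prove $(1)\Rightarrow(1')\Rightarrow(2)\Rightarrow(3)\Rightarrow(1')$ together with the short step $(2)\Rightarrow(1)$; since $(1)\Rightarrow(1')$ is immediate and $(2)\Rightarrow(3)$ is trivial, this closes the loop. Throughout one uses that $Y(g)>0$ and each of $(1'),(2),(3)$ is conformally invariant — for (2) and (3) because the covariance \eqref{eq1.7} makes $\ker P_g$ and the positivity of the Green's function $G_P$ conformally natural — so one may freely replace $g$ by any metric in $[g]$.

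The engine is the counterpart of the identity of \thmref{thm2.2}. Since $Y(g)>0$, for each $p$ the conformal Laplacian has a positive Green's function $u=G_{L,p}$ (see \cite{LP}), and $\widehat g:=u^{4/(n-2)}g$ is scalar flat on $M\setminus\{p\}$; by \eqref{eq1.4}--\eqref{eq1.5} this forces $\widehat J=0$, hence $\widehat Q=-2|A_{\widehat g}|_{\widehat g}^2\le 0$. Feeding $\rho=u^{(n-4)/(n-2)}$ (so $\rho^{4/(n-4)}=u^{4/(n-2)}$) into the Paneitz covariance \eqref{eq1.7}--\eqref{eq1.8} and analysing the $|x|^{4-n}$ singularity at $p$ gives
\[
P_g\!\left(u^{\frac{n-4}{n-2}}\right)=c_n\,\delta_p-(n-4)\,|A_{\widehat g}|_{\widehat g}^2\,u^{\frac{n+4}{n-2}}\quad\text{in }\mathcal D'(M),
\]
with $c_n>0$; as in \thmref{thm2.2} the function $|A_{\widehat g}|_{\widehat g}^2 u^{(n+4)/(n-2)}$ is in fact bounded, vanishing at $p$, because $\widehat g$ is asymptotically flat near $p$. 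To prove $(1')\Rightarrow(2)$, pass to $\widetilde g$ so that $Q_g\geq 0$, $Q_g\not\equiv 0$. Pairing the identity with the constant $1$ and using self-adjointness gives $(n-4)\int_M|A_{\widehat g}|_{\widehat g}^2 u^{(n+4)/(n-2)}\,d\mu_g=c_n-\tfrac{n-4}{2}\int_M u^{(n-4)/(n-2)}Q_g\,d\mu_g$, where $\int_M u^{(n-4)/(n-2)}Q_g\,d\mu_g>0$ since $u^{(n-4)/(n-2)}>0$ and $Q_g\geq 0$, $\not\equiv 0$. If $0\ne\phi\in\ker P_g$, then $\phi$ is nonconstant (as $P_g1=\tfrac{n-4}{2}Q_g\not\equiv 0$), so after possibly replacing $\phi$ by $-\phi$ we may take $\max_M\phi=\phi(p_0)>0$; pairing the identity at $p=p_0$ with $\phi$ gives $c_n\phi(p_0)=(n-4)\int_M\phi\,|A_{\widehat g}|_{\widehat g}^2 u^{(n+4)/(n-2)}\,d\mu_g\leq\phi(p_0)(n-4)\int_M|A_{\widehat g}|_{\widehat g}^2 u^{(n+4)/(n-2)}\,d\mu_g$, and dividing by $\phi(p_0)$ and substituting the previous identity yields $0\leq-\tfrac{n-4}{2}\int_M u^{(n-4)/(n-2)}Q_g\,d\mu_g<0$, a contradiction; hence $\ker P_g=0$. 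The same test‑function device, applied to a putative negative first eigenfunction of $P_g$, shows $P_g$ is a positive operator. With $P_g>0$, $\ker P_g=0$ and $Q_g\geq 0$ one may invoke the strong maximum principle for the Paneitz operator: $P_g^{-1}$ preserves positivity. Writing $c_nG_{P,p}=u^{(n-4)/(n-2)}+\zeta_p$ with $\zeta_p$ bounded and $P_g\zeta_p=(n-4)|A_{\widehat g}|_{\widehat g}^2 u^{(n+4)/(n-2)}\geq 0$, this gives $\zeta_p\geq 0$, so $c_nG_{P,p}\geq u^{(n-4)/(n-2)}>0$ on $M\setminus\{p\}$, which is (2).

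For $(2)\Rightarrow(1)$: since $\ker P_g=0$, the function $\rho:=\int_M G_P(\cdot,q)\,d\mu_g(q)$ is the smooth solution of $P_g\rho\equiv 1$, and it is strictly positive because $G_P(x,q)>0$ for all $q\ne x$; by \eqref{eq1.8} the metric $\rho^{4/(n-4)}g$ has $Q>0$ everywhere. For $(3)\Rightarrow(1')$: from $\ker P_g=0$ and $G_P(p_0,\cdot)>0$, a blow‑up analysis near $p_0$ (after rescaling, $G_P$ converges to the positive fundamental solution $c_n|x-y|^{4-n}$ of $\Delta^2$) shows $G_P>0$ on $B\times B$ off the diagonal for some small ball $B\ni p_0$; combining this with the positivity of $G_P(x,p_0)$ for $x$ away from $p_0$, one checks that for a suitable $\phi\geq 0$, $\phi\not\equiv 0$, supported in a small enough ball about $p_0$, the function $\rho:=\int_M G_P(\cdot,q)\phi(q)\,d\mu_g(q)$ is positive on all of $M$; then $P_g\rho=\phi\geq 0$, $\not\equiv 0$, so $\rho^{4/(n-4)}g$ realises $(1')$.

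The routine parts are the conformal reductions, the derivation of the displayed identity (bookkeeping with the transformation laws \eqref{eq1.7}--\eqref{eq1.8} and the expansion of $G_{L,p}$ at $p$), the test‑function computations, and the blow‑up in $(3)\Rightarrow(1')$. The genuine difficulty — the feature that separates dimension $\geq 5$ from second‑order Yamabe‑type theory — is the strong maximum principle for the fourth‑order operator $P_g$: a positive operator with $Q_g\geq 0$ need not satisfy an ordinary maximum principle (cf. the discussion around Problem \ref{prob3.1}), and showing that $P_g^{-1}$ is positivity preserving (equivalently, that the first eigenfunction of $P_g$ may be chosen positive) is the crux, carried out in \cite{HY4} in the spirit of the fourth‑order strong maximum principle. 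The sign $\widehat Q=-2|A_{\widehat g}|_{\widehat g}^2\leq 0$ in the identity — which makes the curvature term on the right nonnegative, hence makes $\zeta_p=c_nG_{P,p}-u^{(n-4)/(n-2)}$ a nonnegative solution after inverting $P_g$ — is precisely what lets this step be applied, just as in \thmref{thm2.2}.
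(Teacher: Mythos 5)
Your overall architecture is reasonable and partly matches the intended route: the identity of Theorem~\ref{thm3.2} is indeed the engine, your step $(2)\Rightarrow(1)$ (solve $P\rho=1$ with $\rho=\int_M G_P(\cdot,q)\,d\mu(q)>0$) is how one passes from a positive Green's function to a metric with $\widetilde{Q}>0$, and your kernel argument (pairing the identity at a maximum point of a putative kernel element against the normalization obtained by pairing with $1$) is correct — it is the estimate $\left\Vert T_{\Gamma _{1}}\right\Vert _{\mathcal{L}\left( L^{\infty },L^{\infty }\right) }\leq\alpha<1$ from the proof of Theorem~\ref{thm3.3} in disguise. (A small inaccuracy: the curvature term $G_{L,p}^{\frac{n-4}{n-2}}\left\vert Rc_{G_{L,p}^{\frac{4}{n-2}}g}\right\vert _{g}^{2}$ is only in $L^{1}$, being $O\left(\overline{pq}^{\,4-n}\right)$ as in (\ref{eq3.9}); it is not bounded or vanishing at $p$ for $n\geq5$, though $L^1$ suffices for all your pairings.)

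The genuine gap is in $(1')\Rightarrow(2)$. First, your claim that the same test-function device applied to a negative eigenfunction shows $P>0$ does not go through: if $P\phi=\lambda\phi$ with $\lambda<0$, the pairing produces the additional term $\lambda\int_M G_{L,p_0}^{\frac{n-4}{n-2}}\phi\,d\mu$, whose sign at the maximum point $p_0$ of $\phi$ is uncontrolled, so no contradiction arises; and the paper's Problem~\ref{prob3.5} explicitly records that $Y(g)>0$, $Q>0\Rightarrow P>0$ is expected to be \emph{false}, so positivity of $P$ should not be an ingredient of this theorem at all. Second, and more seriously, the ``strong maximum principle'' you then invoke — that $P>0$, $\ker P=0$, $Q\geq0$ make $P^{-1}$ positivity preserving — is not an available fact: the Gursky--Malchiodi principle (Theorem~\ref{thm3.4}) needs the pointwise hypothesis $R>0$ \emph{in the same conformal gauge} as $Q\geq0$, which you cannot arrange from $Y(g)>0$ alone (the paper comments precisely that the metrics with $R>0$ and those with $Q>0$ may be disjoint sets), while ``$P^{-1}$ preserves positivity'' is literally the statement $G_P\geq0$ that you are trying to prove, so the step is circular. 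The mechanism that replaces the missing maximum principle is the one in Theorem~\ref{thm3.3}: from $P_{q}H\left( p,q\right) =\delta _{p}\left( q\right) -\Gamma _{1}\left(p,q\right)$ and $\int_{M}\Gamma _{1}\left( p,q\right) d\mu \left( q\right)\leq\alpha<1$ (pairing with $1$ once $Q>0$), iterate $\varphi =T_{H}\left( P\varphi \right) +T_{\Gamma _{1}}\left( \varphi \right)$ to obtain the convergent expansion $G_{P}=H+\sum_{k=1}^{\infty }\Gamma _{k}\ast H$ of (\ref{eq3.13}), in which every term is nonnegative because $H>0$ and $\Gamma_1\geq0$; this yields $\ker P=0$ and $G_P\geq H>0$ simultaneously, with no positivity of $P$ and no maximum principle. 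Without this (or an equivalent) argument, your decomposition $c_nG_{P,p}=G_{L,p}^{\frac{n-4}{n-2}}+\zeta_p$ with $P\zeta_p\geq0$ cannot deliver $\zeta_p\geq0$, so the implication $(1')\Rightarrow(2)$ — the heart of the theorem — remains unproved.
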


By transformation law (\ref{eq1.7}) we know $\ker P_{g}=0$ is a conformal
invariant condition, moreover under this assumption, the Green's functions
of Paneitz operator $G_{P}$ satisfy%
\begin{equation}
G_{P,\rho ^{\frac{4}{n-4}}g}\left( p,q\right) =\rho \left( p\right)
^{-1}\rho \left( q\right) ^{-1}G_{P,g}\left( p,q\right) .  \label{eq3.1}
\end{equation}%
In particular, the fact $G_{P}>0$ is also a conformal invariant condition.
Of course this condition is clearly more complicated than the one given for
the scalar curvature case, however the main strength of Theorem \ref{thm3.1}
lies in that it gives an easy to check necessary and sufficient condition
for the positivity of the Green's function of Paneitz operator for metrics
of positive Yamabe class. As we will see shortly, the positivity of Green's
function is crucial in the study of $Q$ curvature equation.

The main ingredients in proof of Theorem \ref{thm3.1} is an identity similar
to (\ref{eq2.3}) in higher dimension.

\begin{theorem}[\protect\cite{HY4}]
\label{thm3.2}Assume $n\geq 5$, $\left( M^{n},g\right) $ is a smooth compact
Riemannian manifold with $Y\left( g\right) >0$, $p\in M$, then we have $%
G_{L,p}^{\frac{n-4}{n-2}}\left\vert Rc_{G_{L,p}^{\frac{4}{n-2}}g}\right\vert
_{g}^{2}\in L^{1}\left( M\right) $ and%
\begin{equation}
P\left( G_{L,p}^{\frac{n-4}{n-2}}\right) =c_{n}\delta _{p}-\frac{n-4}{\left(
n-2\right) ^{2}}G_{L,p}^{\frac{n-4}{n-2}}\left\vert Rc_{G_{L,p}^{\frac{4}{n-2%
}}g}\right\vert _{g}^{2}  \label{eq3.2}
\end{equation}%
in distribution sense. Here%
\begin{equation}
c_{n}=2^{-\frac{n-6}{n-2}}n^{\frac{2}{n-2}}\left( n-1\right) ^{-\frac{n-4}{%
n-2}}\left( n-2\right) \left( n-4\right) \omega _{n}^{\frac{2}{n-2}},
\label{eq3.3}
\end{equation}%
$\omega _{n}$ is the volume of unit ball in $\mathbb{R}^{n}$, $G_{L,p}$ is
the Green's function of conformal Laplacian operator with pole at $p$.
\end{theorem}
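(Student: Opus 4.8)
The plan is to work in the conformal metric $\hat g = G_{L,p}^{\frac{4}{n-2}} g$, which is the natural object because $\hat g$ is scalar-flat away from $p$ and, near $p$, is asymptotically flat: $G_{L,p}$ has the expansion $G_{L,p}(x) \sim c\, d(p,x)^{2-n}$, so in $\hat g$-coordinates the manifold $M\setminus\{p\}$ looks like $\mathbb{R}^n$ near infinity. The key identity to exploit is the conformal transformation law \eqref{eq1.7} of the Paneitz operator in dimension $n\neq 4$. Writing $\rho = G_{L,p}^{\frac{n-4}{n-2}} \cdot G_{L,p}^{-\frac{n-4}{n-2}}$ — more precisely, relating $P_g$ acting on $G_{L,p}^{\frac{n-4}{n-2}}$ to $P_{\hat g}$ acting on the constant function $1$ via \eqref{eq1.7} with $\rho = G_{L,p}$ and exponent $\frac{4}{n-2}$ rewritten against the Paneitz scaling exponent $\frac{4}{n-4}$ — one gets, away from $p$,
\begin{equation*}
P_g\!\left(G_{L,p}^{\frac{n-4}{n-2}}\right) = G_{L,p}^{-\frac{n+4}{n-2}}\cdot\frac{n-4}{2}\, Q_{\hat g}\cdot G_{L,p}^{\frac{n-4}{n-2}} \cdot (\text{const}) = \frac{n-4}{2}\, Q_{\hat g}\, d\mu_{\hat g}/d\mu_g \cdot(\cdots),
\end{equation*}
so that pointwise on $M\setminus\{p\}$ the left side equals (up to the volume-density factor) the $Q$-curvature of $\hat g$. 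Since $\hat g$ is scalar-flat there, formula \eqref{eq1.4} in the form $Q = -\Delta J - 2|A|^2 + \frac n2 J^2$ collapses to $Q_{\hat g} = -\frac{2}{(n-2)^2}|Rc_{\hat g}|_{\hat g}^2$ (using $J_{\hat g}=0$ and $A_{\hat g} = \frac{1}{n-2}Rc_{\hat g}$). Converting the density factor back gives exactly the claimed pointwise relation on $M\setminus\{p\}$ with the coefficient $-\frac{n-4}{(n-2)^2}$ in front of $G_{L,p}^{\frac{n-4}{n-2}}|Rc_{\hat g}|_g^2$.

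**The distributional statement and the constant $c_n$.** What remains is to identify the distributional Laplacian-type singularity at $p$. First one must check the integrability claim $G_{L,p}^{\frac{n-4}{n-2}}|Rc_{\hat g}|_g^2 \in L^1(M)$: away from $p$ this is the smooth function $-\frac{(n-2)^2}{2} Q_{\hat g}\cdot(\text{density})$, so the only issue is the behavior near $p$, where the asymptotic flatness of $\hat g$ must be quantified — one needs that $|Rc_{\hat g}|$ decays fast enough against the growth of the weight. This uses the refined expansion of $G_{L,p}$ near $p$ (the positive mass / leading-term analysis available since $Y(g)>0$), and is where care is needed. Then, to pin down $c_n$, I would test \eqref{eq3.2} against a smooth function, integrate by parts on $M\setminus B_\varepsilon(p)$, and track the boundary terms as $\varepsilon\to 0$: only the most singular part of $G_{L,p}$ near $p$ contributes, and since $G_{L,p}^{\frac{n-4}{n-2}} \sim (c\, r^{2-n})^{\frac{n-4}{n-2}} = c' r^{-(n-4)}$, the function $G_{L,p}^{\frac{n-4}{n-2}}$ is (a constant multiple of) the fundamental-solution-type profile $r^{-(n-4)}$ whose bi-Laplacian produces a delta; the constant $c_n$ in \eqref{eq3.3} is then just the resulting normalization constant, assembled from $\Delta^2 r^{4-n}$ in $\mathbb{R}^n$, the leading coefficient $c = c(n)$ of the Green's function $G_{L,p}$, and $|S^{n-1}| = n\omega_n$.

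**Main obstacle.** The routine parts are the conformal-transformation bookkeeping and the scalar-flat simplification of $Q_{\hat g}$. The genuinely delicate step is the local analysis at $p$: establishing that $G_{L,p}^{\frac{n-4}{n-2}}$ differs from the model profile $c' r^{-(n-4)}$ by a term regular enough that (i) the error contributes nothing to the distributional singularity, and (ii) $|Rc_{\hat g}|_g^2$ weighted by $G_{L,p}^{\frac{n-4}{n-2}}$ is integrable near $p$. This requires knowing the next-order term in the expansion of $G_{L,p}$ (and hence controlling the curvature of $\hat g$ near its asymptotically flat end), which in turn relies on the positive Yamabe hypothesis and standard elliptic regularity for the conformal Laplacian. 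Once that expansion is in hand, passing to the limit in the integration by parts is a direct computation. I would also remark that setting the test function to $1$ in \eqref{eq3.2} recovers $\int_M Q_{\hat g}\,d\mu_{\hat g} = $ a negative multiple of $\int_M |Rc_{\hat g}|^2$, which is the higher-dimensional analogue of the Dimension-4 identity and the mechanism behind Theorem \ref{thm3.1}.
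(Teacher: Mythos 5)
Your proposal is correct and follows essentially the same route as the proof in \cite{HY4} which this survey cites for Theorem \ref{thm3.2} (the paper itself states the result without reproducing the proof): pass to the scalar-flat, asymptotically flat metric $G_{L,p}^{\frac{4}{n-2}}g$, use the covariance (\ref{eq1.7})--(\ref{eq1.8}) together with $Q=-\Delta J-2\left\vert A\right\vert^{2}+\frac{n}{2}J^{2}$ to get the pointwise identity with coefficient $-\frac{n-4}{(n-2)^{2}}$ away from $p$, and extract the delta at $p$ from the leading profile $r^{4-n}$, with the expansion of $G_{L,p}$ in conformal normal coordinates supplying both the integrability (equivalently $\Gamma_{1}=O\left(\overline{pq}^{4-n}\right)$, cf.\ (\ref{eq3.9})) and the fact that the remainder and the lower-order terms of $P$ contribute no singular part. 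Your normalization also checks out: with $G_{L,p}\sim\frac{1}{4n(n-1)\omega_{n}}r^{2-n}$ and $\Delta^{2}r^{4-n}=2n(n-2)(n-4)\omega_{n}\,\delta_{p}$ in $\mathbb{R}^{n}$, the product gives exactly the constant $c_{n}$ in (\ref{eq3.3}).
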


Here we will give another conformal invariant condition for the existence of
conformal metric with positive $Q$ curvature. To achieve this we first
introduce some notations.

Let $\left( M,g\right) $ be a smooth compact Riemannian manifold. If $%
K=K\left( p,q\right) $ is a suitable function on $M\times M$, we define an
operator $T_{K}$ as%
\begin{equation}
T_{K}\left( \varphi \right) \left( p\right) =\int_{M}K\left( p,q\right)
\varphi \left( q\right) d\mu \left( q\right)  \label{eq3.4}
\end{equation}%
for any nice function $\varphi $ on $M$. If $K^{\prime }=K^{\prime }\left(
p,q\right) $ is another function on $M\times M$, then we write%
\begin{equation}
\left( K\ast K^{\prime }\right) \left( p,q\right) =\int_{M}K\left(
p,s\right) K^{\prime }\left( s,q\right) d\mu \left( s\right) .  \label{eq3.5}
\end{equation}

If $n\geq 5$ and $Y\left( g\right) >0$, we write 
\begin{eqnarray}
&&H\left( p,q\right)  \label{eq3.6} \\
&=&2^{\frac{n-6}{n-2}}n^{-\frac{2}{n-2}}\left( n-1\right) ^{\frac{n-4}{n-2}%
}\left( n-2\right) ^{-1}\left( n-4\right) ^{-1}\omega _{n}^{-\frac{2}{n-2}%
}G_{L}\left( p,q\right) ^{\frac{n-4}{n-2}},  \notag
\end{eqnarray}%
and%
\begin{eqnarray}
&&\Gamma _{1}\left( p,q\right)  \label{eq3.7} \\
&=&2^{\frac{n-6}{n-2}}n^{-\frac{2}{n-2}}\left( n-1\right) ^{\frac{n-4}{n-2}%
}\left( n-2\right) ^{-3}\omega _{n}^{-\frac{2}{n-2}}G_{L}\left( p,q\right) ^{%
\frac{n-4}{n-2}}\left\vert Rc_{G_{L,p}^{\frac{4}{n-2}}g}\right\vert
_{g}^{2}\left( q\right) .  \notag
\end{eqnarray}%
Then (\ref{eq3.2}) becomes%
\begin{equation}
P_{q}H\left( p,q\right) =\delta _{p}\left( q\right) -\Gamma _{1}\left(
p,q\right) .  \label{eq3.8}
\end{equation}%
Note that by the calculation in \cite[Section 2]{HY4},%
\begin{equation}
\Gamma _{1}\left( p,q\right) =O\left( \overline{pq}^{4-n}\right) ,
\label{eq3.9}
\end{equation}%
here $\overline{pq}$ denotes the distance between $p$ and $q$. Assume for
all $p\in M$,%
\begin{equation}
0\leq \int_{M}\Gamma _{1}\left( p,q\right) d\mu \left( q\right) \leq \alpha
<\infty ,  \label{eq3.10}
\end{equation}%
then%
\begin{equation}
\left\Vert T_{\Gamma _{1}}\varphi \right\Vert _{L^{\infty }\left( M\right)
}\leq \alpha \left\Vert \varphi \right\Vert _{L^{\infty }\left( M\right) }.
\label{eq3.11}
\end{equation}%
Moreover if we let $\widetilde{g}=\rho ^{\frac{4}{n-4}}g$, here $\rho $ is a
positive smooth function, then for any smooth function $\varphi $ on $M$,%
\begin{equation}
T_{\widetilde{\Gamma }_{1}}\left( \varphi \right) =\rho ^{-1}T_{\Gamma
_{1}}\left( \rho \varphi \right) .  \label{eq3.12}
\end{equation}%
In another word, $T_{\widetilde{\Gamma }_{1}}$ is similar to $T_{\Gamma
_{1}} $. Hence they have the same spectrum and spectral radius i.e. $\sigma
\left( T_{\widetilde{\Gamma }_{1}}\right) =\sigma \left( T_{\Gamma
_{1}}\right) $ and $r_{\sigma }\left( T_{\widetilde{\Gamma }_{1}}\right)
=r_{\sigma }\left( T_{\Gamma _{1}}\right) $ (the spectral radius).

\begin{theorem}
\label{thm3.3}Assume $n\geq 5$, $\left( M^{n},g\right) $ is a smooth compact
Riemannian manifold with $Y\left( g\right) >0$, then%
\begin{equation*}
\exists \widetilde{g}\in \left[ g\right] \text{ with }\widetilde{Q}%
>0.\Longleftrightarrow \text{the spectral radius }r_{\sigma }\left(
T_{\Gamma _{1}}\right) <1.
\end{equation*}%
Moreover if $r_{\sigma }\left( T_{\Gamma _{1}}\right) <1$, then $\ker P=0$
and%
\begin{equation}
G_{P}=H+\sum_{k=1}^{\infty }\Gamma _{k}\ast H,  \label{eq3.13}
\end{equation}%
here%
\begin{equation}
\Gamma _{k}=\Gamma _{1}\ast \cdots \ast \Gamma _{1}\text{ (}k\text{ times),}
\label{eq3.14}
\end{equation}%
$H$ and $\Gamma _{1}$ are given in (\ref{eq3.6}) and (\ref{eq3.7}). The
convergence in (\ref{eq3.13}) is uniform in the sense that%
\begin{equation*}
G_{P}-H-\sum_{k=1}^{l}\Gamma _{k}\ast H\longrightarrow 0
\end{equation*}%
uniformly on $M\times M$ as $l\rightarrow \infty $. In particular, $%
G_{P}\geq H$, moreover if $G_{P}\left( p,q\right) =H\left( p,q\right) $ for
some $p\neq q$, then $\left( M,g\right) $ is conformal equivalent to the
standard $S^{n}$.
\end{theorem}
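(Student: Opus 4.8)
I would prove the two implications separately, each pivoting on the distributional identity (\ref{eq3.8}); the Neumann series and the rigidity statement then drop out of the reverse direction. Some preliminary bookkeeping on $T_{\Gamma_1}$ is needed first. By (\ref{eq3.9}) the kernel $\Gamma_1(p,q)=O(\overline{pq}^{4-n})$ is uniformly integrable in $q$, so $T_{\Gamma_1}$ is a bounded \emph{positive} operator on $C(M)$ with $\|T_{\Gamma_1}\|=\sup_{p}\int_M\Gamma_1(p,q)\,d\mu(q)$; likewise $H\ge0$ with $H(p,q)=O(\overline{pq}^{4-n})$. By the standard composition estimate for Riesz-type kernels on the compact $M$, the $k$-fold compositions in (\ref{eq3.14}) satisfy $\Gamma_k(p,q)=O(\overline{pq}^{\max(0,n-4k)})$, so for a fixed $k_0$ the kernel $\Gamma_{k_0}$ — hence $\Gamma_{k_0}\ast H$, and $\Gamma_{k_0}\ast G_P$ once $G_P$ is available — is bounded (indeed continuous) on $M\times M$. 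Finally, (\ref{eq3.12}) shows $T_{\widetilde\Gamma_1}$ is similar to $T_{\Gamma_1}$ under any conformal change, so $r_\sigma(T_{\widetilde\Gamma_1})=r_\sigma(T_{\Gamma_1})$ and $\|T_{\widetilde\Gamma_1}^{m}\|\le C\|T_{\Gamma_1}^{m}\|$.

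\textbf{Forward direction.} Suppose $\widetilde g=\rho^{4/(n-4)}g\in[g]$ has $\widetilde Q>0$; on the closed $M$ this gives $\widetilde Q\ge\delta_1>0$. Since $Y(\widetilde g)=Y(g)>0$, identity (\ref{eq3.8}) holds for $\widetilde g$; pairing it with $\varphi\equiv1$ and using $\widetilde P1=\tfrac{n-4}{2}\widetilde Q$ (read off from (\ref{eq1.6})) together with self-adjointness of $\widetilde P$ gives
\[
\int_M\widetilde\Gamma_1(p,q)\,d\mu_{\widetilde g}(q)=1-\frac{n-4}{2}\int_M\widetilde H(p,q)\,\widetilde Q(q)\,d\mu_{\widetilde g}(q)\ \le\ 1-\frac{n-4}{2}\,\delta_1\delta_2\ <\ 1,
\]
where $\delta_2=\inf_{p}\int_M\widetilde H(p,\cdot)\,d\mu_{\widetilde g}>0$ is a positive lower bound for a continuous positive function on the compact $M$ (recall $\widetilde H=c(n)\widetilde G_L^{(n-4)/(n-2)}>0$ since $Y>0$). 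Hence $\|T_{\widetilde\Gamma_1}\|_{C(M)\to C(M)}<1$, so $r_\sigma(T_{\Gamma_1})=r_\sigma(T_{\widetilde\Gamma_1})\le\|T_{\widetilde\Gamma_1}\|<1$.

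\textbf{Reverse direction and the series (\ref{eq3.13}).} Now assume $r_\sigma(T_{\Gamma_1})<1$. First I would show $\ker P=0$: if $Pu=0$ then $u$ is smooth, and pairing (\ref{eq3.8}) against $u$ and using $P=P^{\ast}$ gives $u=T_{\Gamma_1}u$; since $1\notin\sigma(T_{\Gamma_1})$, $I-T_{\Gamma_1}$ is invertible on $C(M)$, forcing $u\equiv0$. Thus the Green's function $G_P$ exists (symmetric, $G_P(p,q)=O(\overline{pq}^{4-n})$); subtracting (\ref{eq3.8}) from $P_qG_P(p,q)=\delta_p(q)$ and inverting $P$ (the distributional solution of $Pv=\Gamma_1(p,\cdot)$ being unique as $\ker P=0$) yields $G_P=H+\Gamma_1\ast G_P$, with $\ast$ the composition (\ref{eq3.5}), and iterating,
\[
G_P=H+\sum_{k=1}^{l}\Gamma_k\ast H+\Gamma_{l+1}\ast G_P .
\]
Fixing $\theta\in(r_\sigma(T_{\Gamma_1}),1)$, Gelfand's formula gives $\|T_{\Gamma_1}^{m}\|\le C\theta^{m}$; writing $\Gamma_{l+1}\ast G_P=\Gamma_{l+1-k_0}\ast(\Gamma_{k_0}\ast G_P)$ for $l+1\ge k_0$ then gives $\|\Gamma_{l+1}\ast G_P\|_{L^\infty(M\times M)}\le C\theta^{l+1-k_0}\sup_q\|(\Gamma_{k_0}\ast G_P)(\cdot,q)\|_{L^\infty(M)}\to0$, which is exactly the uniform convergence asserted in (\ref{eq3.13})--(\ref{eq3.14}). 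Since all summands are $\ge0$ and $H>0$ off the diagonal, $G_P\ge H>0$ there, so $\ker P=0$ and $G_P>0$; by \thmref{thm3.1}, $(2)\Rightarrow(1)$, there is $\widetilde g\in[g]$ with $\widetilde Q>0$, completing the equivalence.

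\textbf{Rigidity, and the main obstacle.} If $G_P(p_0,q_0)=H(p_0,q_0)$ for some $p_0\ne q_0$, then by (\ref{eq3.13}) and positivity of every term $\int_M\Gamma_1(p_0,s)H(s,q_0)\,d\mu(s)=0$; as $H(s,q_0)>0$ for all $s$, this forces $\Gamma_1(p_0,\cdot)\equiv0$ on $M\setminus\{p_0\}$, i.e. by (\ref{eq3.7}) (and $G_L>0$) the metric $\widehat g=G_{L,p_0}^{4/(n-2)}g$ on $M\setminus\{p_0\}$ is Ricci-flat; being a complete, asymptotically flat, Ricci-flat metric it must be flat $\mathbb{R}^n$, exactly as in the proof of \thmref{thm2.2} (relative volume comparison / positive mass theorem, see \cite{HY4}), so $(M,g)$ is conformally equivalent to the standard $S^n$. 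The delicate point of the whole argument is the Neumann series step: the low iterates $\Gamma_k$ and the partial sums $\sum_{k\le l}\Gamma_k\ast H$ are genuinely singular on the diagonal (unbounded already for $n\ge9$), so one cannot run a monotone/Dini argument but must absorb the singular behaviour into $k_0$ fixed convolutions — which needs the estimate $\Gamma_k=O(\overline{pq}^{\max(0,n-4k)})$ and some care to make sense of ``invert $P$'' when $\Gamma_1(p,\cdot)\notin L^2$ — before the Gelfand decay can be applied to the continuous remainder.
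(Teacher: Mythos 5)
Your proposal is correct and takes essentially the same route as the paper: the forward implication by setting $\varphi =1$ in the identity coming from (\ref{eq3.8}) after conformally changing to a metric with $Q>0$, and the reverse implication, the series (\ref{eq3.13}), and the rigidity statement by a Neumann-series iteration whose tail is controlled by Gelfand's formula together with finitely many smoothing convolutions (the paper fixes $m>\frac{n}{4}$ so that $\Gamma _{m}\in L^{\infty }$ and iterates the test-function identity (\ref{eq3.15}) rather than the kernel identity $G_{P}=H+\Gamma _{1}\ast G_{P}$, which also sidesteps the issue you flag about inverting $P$ against a right-hand side not in $L^{2}$). The only blemish is a sign slip in your composition estimate, which should read $\Gamma _{k}=O(\overline{pq}^{-\max (0,n-4k)})$ (bounded once $4k>n$), exactly as you in fact use it.
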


\begin{proof}
Assume there exists a $\widetilde{g}\in \left[ g\right] $ with $\widetilde{Q}%
>0$, then we hope to show $r_{\sigma }\left( T_{\Gamma _{1}}\right) <1$.
Because $r_{\sigma }\left( T_{\Gamma _{1}}\right) =r_{\sigma }\left( T_{%
\widetilde{\Gamma }_{1}}\right) $, replacing $g$ with $\widetilde{g}$ we can
assume the background metric satisfies $Q>0$. By (\ref{eq3.8}) we know for
any smooth function $\varphi $,%
\begin{equation}
\varphi =T_{H}\left( P\varphi \right) +T_{\Gamma _{1}}\left( \varphi \right)
.  \label{eq3.15}
\end{equation}%
Taking $\varphi =1$ in (\ref{eq3.15}) we get%
\begin{equation}
\int_{M}\Gamma _{1}\left( p,q\right) d\mu \left( q\right) =1-\frac{n-4}{2}%
\int_{M}H\left( p,q\right) Q\left( q\right) d\mu \left( q\right) .
\label{eq3.16}
\end{equation}%
Using the fact $Q>0$ we know there exists a constant $\alpha $ such that%
\begin{equation*}
\int_{M}\Gamma _{1}\left( p,q\right) d\mu \left( q\right) \leq \alpha <1
\end{equation*}%
for all $p\in M$. It follows that%
\begin{equation*}
\left\Vert T_{\Gamma _{1}}\right\Vert _{\mathcal{L}\left( L^{\infty
},L^{\infty }\right) }\leq \alpha
\end{equation*}%
and hence%
\begin{equation*}
r_{\sigma }\left( T_{\Gamma _{1}}\right) \leq \alpha <1.
\end{equation*}

On the other hand, assume $r_{\sigma }\left( T_{\Gamma _{1}}\right) <\alpha
<1$, then we can find a constant $k_{0}$ such that for $k\geq k_{0}$,%
\begin{equation*}
\left\Vert T_{\Gamma _{k}}\right\Vert _{\mathcal{L}\left( L^{\infty
},L^{\infty }\right) }<\alpha ^{k}.
\end{equation*}%
It follows that%
\begin{equation*}
\int_{M}\Gamma _{k}\left( p,q\right) d\mu \left( q\right) <\alpha ^{k}.
\end{equation*}%
Fix $m>\frac{n}{4}$, using estimate (\ref{eq3.9}) we see for all $k\geq
k_{0}+m$,%
\begin{equation*}
\left\Vert \Gamma _{k}\right\Vert _{L^{\infty }}\leq \alpha ^{k-m}\left\Vert
\Gamma _{m}\right\Vert _{L^{\infty }}\leq c\alpha ^{k}.
\end{equation*}%
In particular $\left\Vert \Gamma _{k}\right\Vert _{L^{\infty }}\rightarrow 0$
and%
\begin{equation*}
\left\Vert \Gamma _{k}\ast H\right\Vert _{L^{\infty }}\leq c\left\Vert
\Gamma _{k}\right\Vert _{L^{\infty }}\leq c\alpha ^{k}.
\end{equation*}%
Iterating (\ref{eq3.15}) we see%
\begin{equation*}
\varphi =T_{H+\Gamma _{1}\ast H+\cdots +\Gamma _{k-1}\ast H}\left( P\varphi
\right) +T_{\Gamma _{k}}\left( \varphi \right) .
\end{equation*}%
Let $k\rightarrow \infty $, we see%
\begin{equation*}
\varphi =T_{H+\sum_{k=1}^{\infty }\Gamma _{k}\ast H}\left( P\varphi \right) .
\end{equation*}%
In particular, $P\varphi =0$ implies $\varphi =0$ i.e. $\ker P=0$. Moreover%
\begin{equation*}
G_{P}=H+\sum_{k=1}^{\infty }\Gamma _{k}\ast H.
\end{equation*}%
In particular $G_{P}\geq H>0$. If $G_{P}\left( p,q\right) =H\left(
p,q\right) $ for some $p\neq q$, then $\Gamma _{1}\left( p,\cdot \right) =0$%
, in another word%
\begin{equation*}
Rc_{G_{L,p}^{\frac{4}{n-2}}g}=0.
\end{equation*}%
Since $\left( M\backslash \left\{ p\right\} ,G_{L,p}^{\frac{4}{n-2}}g\right) 
$ is asymptotically flat, it follows from relative volume comparison theorem
that $\left( M\backslash \left\{ p\right\} ,G_{L,p}^{\frac{4}{n-2}}g\right) $
is isometric to $\mathbb{R}^{n}$, hence $\left( M,g\right) $ is conformal
equivalent to standard $S^{n}$.

Since $G_{P}>0$, it follows from Theorem \ref{thm3.1} that there exists $%
\widetilde{g}\in \left[ g\right] $ with $\widetilde{Q}>0$.
\end{proof}

We remark that the infinite series expansion of $G_{P}$ in (\ref{eq3.13}) is
similar to those for Green's function of Laplacian in \cite{Au1}.

\begin{remark}
\label{rmk3.1}Indeed it follows from (\ref{eq3.16}) that as long as $Y\left(
g\right) >0$ and%
\begin{equation*}
\int_{M}H\left( p,q\right) Q\left( q\right) d\mu \left( q\right) >0
\end{equation*}%
for all $p\in M$, then $r_{\sigma }\left( T_{\Gamma _{1}}\right) <1$. In
particular this is the case when $Q\geq 0$ and not identically zero.
\end{remark}

\begin{problem}
\label{prob3.3}Let $\left( M,g\right) $ be a smooth compact Riemannian
manifold with dimension $n\geq 5$, can we find a metric $\widetilde{g}\in %
\left[ g\right] $ such that $\widetilde{Q}=const$?
\end{problem}

This turns out to be a difficult problem with only partial solutions
available. If we write the unknown metric $\widetilde{g}=\rho ^{\frac{4}{n-4}%
}g$, then we need to solve%
\begin{equation}
P\rho =const\cdot \rho ^{\frac{n+4}{n-4}},\quad \rho \in C^{\infty }\left(
M\right) ,\rho >0.  \label{eq3.17}
\end{equation}%
As in the case of Yamabe problem, (\ref{eq3.17}) has a variational
structure. Indeed, for $u\in C^{\infty }\left( M\right) $, let%
\begin{eqnarray}
E\left( u\right) &=&\int_{M}Pu\cdot ud\mu  \label{eq3.18} \\
&=&\int_{M}\left[ \left( \Delta u\right) ^{2}-4A\left( \nabla u,\nabla
u\right) +\left( n-2\right) J\left\vert \nabla u\right\vert ^{2}+\frac{n-4}{2%
}Qu^{2}\right] d\mu .  \notag
\end{eqnarray}%
Clearly we can extend $E\left( u\right) $ continuously to $u\in H^{2}\left(
M\right) $. Let%
\begin{equation}
Y_{4}\left( g\right) =\inf_{u\in H^{2}\left( M\right) \backslash \left\{
0\right\} }\frac{E\left( u\right) }{\left\Vert u\right\Vert _{L^{\frac{2n}{%
n-4}}}^{2}},  \label{eq3.19}
\end{equation}%
then $Y_{4}\left( g\right) $ is a conformal invariant in the same spirit as $%
Y\left( g\right) $. If $Y_{4}\left( g\right) $ is achieved at a smooth
positive function $\rho $, then it satisfies (\ref{eq3.17}). On the other
hand, even if $Y_{4}\left( g\right) $ is achieved at a function $u\in
H^{2}\left( M\right) $, we can not conclude whether $u$ changes sign or not.
An observation made in \cite{R} says that if $P>0$ and $G_{P}>0$, then the
minimizer must be smooth and either strictly positive or strictly negative.
We remark that it had been observed in \cite{HeR1, HeR2, HuR} that the
positivity of Green's function of Paneitz operator plays crucial roles in
various issues related to $Q$ curvature. Without the classical maximum
principle, it is hard to know the sign of Green's function of the fourth
order operator. A breakthrough was made in \cite{GM}, which provides an easy
to check sufficient condition for the positivity of Green's function.

\begin{theorem}[\protect\cite{GM}]
\label{thm3.4}Assume $n\geq 5$, $\left( M^{n},g\right) $ is a smooth compact
Riemannian manifold with $R>0$, $Q\geq 0$ and not identically zero, then $%
P>0 $. Moreover if $u$ is a nonzero smooth function with $Pu\geq 0$, then $%
u>0$ and $R_{u^{\frac{4}{n-4}}g}>0$. In particular, $G_{P}>0$.
\end{theorem}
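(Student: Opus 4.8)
The plan is to use conformal covariance to pass to the Yamabe gauge, to read off $\ker P=0$ and $G_{P}>0$ from \thmref{thm3.3}, to deduce the strong maximum principle from the Green's representation formula, and to treat positivity of the operator itself as a separate (and, I expect, harder) point.

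First I would note that all four assertions are conformally invariant. If $\widetilde{g}=\rho ^{\frac{4}{n-4}}g$ with $\rho >0$, then (\ref{eq1.7}) together with $d\mu _{\widetilde{g}}=\rho ^{\frac{2n}{n-4}}d\mu _{g}$ gives $\int_{M}P_{\widetilde{g}}\varphi \cdot \varphi \,d\mu _{\widetilde{g}}=\int_{M}P_{g}(\rho \varphi )\cdot (\rho \varphi )\,d\mu _{g}$, so positivity of the quadratic form passes between conformal metrics; (\ref{eq3.1}) does the same for $G_{P}>0$; and $P_{g}u\geq 0\Leftrightarrow P_{\widetilde{g}}(\rho ^{-1}u)\geq 0$, $u>0\Leftrightarrow \rho ^{-1}u>0$, with $u^{\frac{4}{n-4}}g=(\rho ^{-1}u)^{\frac{4}{n-4}}\widetilde{g}$. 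Since $R_{g}>0$ forces $Y(g)>0$, solving the Yamabe problem lets me assume $g$ has constant positive scalar curvature; then $J>0$ is a constant, the twice-contracted Bianchi identity yields $\func{div}A=0$, and $Q=-2|A|^{2}+\frac{n}{2}J^{2}\geq 0$ gives the pointwise bound $|A|^{2}\leq \frac{n}{4}J^{2}$.

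Next, since $Y(g)>0$ and $Q\geq 0$ is not identically zero, Remark~\ref{rmk3.1} applies and gives $r_{\sigma }(T_{\Gamma _{1}})<1$; hence \thmref{thm3.3} yields $\ker P=0$ together with the uniformly convergent expansion $G_{P}=H+\sum_{k\geq 1}\Gamma _{k}\ast H\geq H>0$. The strong maximum principle then follows formally: because $\ker P=0$, the map $P\colon H^{4}(M)\to L^{2}(M)$ is an isomorphism with inverse $T_{G_{P}}$, so $Pu=f$ forces $u(p)=\int_{M}G_{P}(p,q)f(q)\,d\mu (q)$; if $u\not\equiv 0$ and $Pu\geq 0$ then $f=Pu$ is nonnegative and, by injectivity of $P$, not identically zero, so positivity of the kernel $G_{P}$ gives $u>0$ everywhere. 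Given $u>0$, setting $\widetilde{g}=u^{\frac{4}{n-4}}g$ one has $Q_{\widetilde{g}}=\frac{2}{n-4}u^{-\frac{n+4}{n-4}}Pu\geq 0$, not identically zero, while $Y(\widetilde{g})=Y(g)>0$; the remaining statement $R_{\widetilde{g}}>0$ is part of the same package, obtained by a maximum-principle argument for the (positive) conformal Laplacian --- e.g.\ by tracking the sign of the scalar curvature of $u_{t}^{\frac{4}{n-4}}g$ along the segment $u_{t}=(1-t)+tu$, for which $P_{g}u_{t}\geq 0$, $u_{t}>0$ and $u_{0}^{\frac{4}{n-4}}g=g$.

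The remaining claim, that $P$ is a positive operator, is where the real difficulty lies, and I expect this to be the main obstacle. One wants $E(u):=\int_{M}uPu\,d\mu >0$ for $u\not\equiv 0$. In the Yamabe gauge the Bochner identity rewrites it as $E(u)=\int |\nabla ^{2}u|^{2}+(n-6)\int A(\nabla u,\nabla u)+(n-1)J\int |\nabla u|^{2}+\frac{n-4}{2}\int Qu^{2}$, and the obstruction is the indefinite term $\int A(\nabla u,\nabla u)$: divergence-freeness of $A$ turns it into $-\int \langle A,\nabla ^{2}u\rangle u$, which the bound on $|A|$ controls against $\int |\nabla ^{2}u|^{2}$ and $\int |A|^{2}u^{2}$, but the favorable contribution that the $Q$-term would supply is exactly absorbed by the crude pointwise bound $|A|^{2}\leq \frac{n}{4}J^{2}$, so the naive estimate leaves an uncontrolled negative multiple of $\int u^{2}$ and does not close for general $n$ (it barely closes when $n=6$). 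The argument of \cite{GM} replaces this by a continuity/deformation method: deform the Schouten data of $g$ toward that of an Einstein metric with the same scalar curvature, for which the Paneitz operator degenerates to $\Delta ^{2}+b(-\Delta )+c$ with $b,c>0$, hence has only positive eigenvalues $\mu ^{2}+b\mu +c$ over the spectrum $\mu \geq 0$ of $-\Delta$ and is manifestly positive; one then shows the bottom of the spectrum of the deformed operators stays positive along the whole path, the condition $Q\geq 0$ (only strengthened under the deformation) being precisely what rules out a zero mode at the end, and openness of positivity of the first eigenvalue propagates the conclusion back to $g$.
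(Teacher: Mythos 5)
The paper itself gives no proof of Theorem~\ref{thm3.4} (it is quoted from \cite{GM}), so your proposal has to stand on its own. The part you actually complete is correct and is a genuinely different, simpler route than the original: since $R>0$ gives $Y\left( g\right) >0$ and $Q\geq 0$, $Q\not\equiv 0$ holds for the background metric, Remark~\ref{rmk3.1} and the forward implication of \thmref{thm3.3} (which rests only on the identity (\ref{eq3.2}), not on \cite{GM}) give $\ker P=0$ and $G_{P}\geq H>0$, and your Green's representation $u=T_{G_{P}}\left( Pu\right) $ then yields the strong maximum principle statement $Pu\geq 0$, $u\not\equiv 0\Rightarrow u>0$. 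That is a clean observation. However, two of the theorem's conclusions --- precisely the ones that constitute the substance of \cite{GM} --- are not proved. First, $R_{u^{\frac{4}{n-4}}g}>0$: "tracking the sign of the scalar curvature along $u_{t}=\left( 1-t\right) +tu$" is the right shape of argument, but its entire content is the closedness step, namely that positivity of the scalar curvature of $u_{t}^{\frac{4}{n-4}}g$ cannot be lost while $u_{t}>0$, $Pu_{t}\geq 0$, $R>0$, $Q\geq 0$; this requires the key pointwise lemma of \cite{GM} (an argument at a minimum point of the conformal scalar curvature exploiting the specific structure of $P$), which you neither state nor prove, and a "maximum-principle argument for $L$" by itself gives nothing since $L\left( u_{t}^{\frac{n-2}{n-4}}\right) $ has no a priori sign.

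Second, the main assertion $P>0$ is left unproven. Note that $\ker P=0$ together with $G_{P}>0$ does \emph{not} imply $P>0$ for a fourth order operator: by Jentzsch/Krein--Rutman, positivity of the kernel of $T_{G_{P}}$ only forces the eigenvalue of $P$ of smallest modulus to be positive, and says nothing about eigenvalues farther from zero, so positivity of the quadratic form really is a separate argument. Your proposed substitute --- "deform the Schouten data of $g$ toward that of an Einstein metric with the same scalar curvature" --- is speculative and not carried out: no path of metrics or operators is specified along which the hypotheses $R>0$, $Q\geq 0$ persist, and the claim that the bottom of the spectrum cannot cross zero along the path is itself a kernel-triviality statement of exactly the same difficulty as what is to be proved, so nothing has been reduced. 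There is also an error in your reduction: $Q\geq 0$ is not a conformally invariant condition (it transforms by (\ref{eq1.8})), so after passing to the Yamabe gauge you may not assume $Q\geq 0$, and the pointwise bound $\left\vert A\right\vert ^{2}\leq \frac{n}{4}J^{2}$ on which your (admittedly abandoned) energy estimate rests is unjustified. As it stands, the proposal proves the $G_{P}>0$ and $u>0$ statements by a legitimate alternative route, but not $P>0$ nor $R_{u^{\frac{4}{n-4}}g}>0$.
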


Note that the necessary and sufficient condition in Theorem \ref{thm3.1} is
motivated by \cite{GM, HuR}. The final solution of Yamabe problem uses the
positive mass theorem (see \cite{LP, S}). The corresponding statement for
the Paneitz operator is established in \cite{GM, HuR}. Indeed an elementary
but ingenious calculation in \cite{HuR} justifies the positivity of mass
under the assumption of positivity of Green's function of Paneitz operator
for locally conformally flat manifolds. As pointed out in \cite{GM}, the
same calculation carries through to nonlocally conformally flat manifolds in
dimension $5,6$ and $7$ as well. A close connection between the positive
mass result and formula (\ref{eq3.2}) is found in \cite[section 6]{HY4}.
Combine these with Theorem \ref{thm3.1} and \ref{thm3.3} we have

\begin{theorem}[\protect\cite{GM, HY4, HY5, HuR}]
\label{thm3.5}Assume $n\geq 5$, $\left( M^{n},g\right) $ is a smooth compact
Riemannian manifold with $Y\left( g\right) >0$ and the spectral radius $%
r_{\sigma }\left( T_{\Gamma _{1}}\right) <1$ ($\Gamma _{1}$ is given by (\ref%
{eq3.7})). If $n=5,6,7$ or $\left( M,g\right) $ is locally conformally flat
near $p\in M$, then $\ker P=0$ and under conformal normal coordinate at $p$, 
$x_{1},\cdots ,x_{n}$,%
\begin{equation*}
G_{P,p}=\frac{1}{2n\left( n-2\right) \left( n-4\right) \omega _{n}}\left(
r^{4-n}+A+O\left( r\right) \right) ,
\end{equation*}%
with the constant $A\geq 0$, here $r=\left\vert x\right\vert $, $\omega _{n}$
is the volume of the unit ball in $\mathbb{R}^{n}$. Moreover $A=0$ if and
only if $\left( M,g\right) $ is conformal equivalent to $S^{n}$.
\end{theorem}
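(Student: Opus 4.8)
The plan is to deduce the whole statement from the series representation of $G_{P}$ provided by Theorem~\ref{thm3.3}, using the hypothesis ``$n=5,6,7$ or $(M,g)$ locally conformally flat near $p$'' only to justify the asymptotic expansion of $G_{P,p}$ at its pole. First, since $Y(g)>0$ and $r_{\sigma}(T_{\Gamma_{1}})<1$, Theorem~\ref{thm3.3} already gives $\ker P=0$, $G_{P}\geq H>0$, and $G_{P,p}(q)=H(p,q)+\sum_{k\geq 1}(\Gamma_{k}\ast H)(p,q)$ with the tail tending to $0$ uniformly on $M\times M$. Next, working in conformal normal coordinates at $p$ and using the expansion $G_{L,p}=\frac{1}{4n(n-1)\omega_{n}}(r^{2-n}+O(1))$ together with the definition (\ref{eq3.6}) of $H$, one checks that the explicit constant in (\ref{eq3.6}) equals $c_{n}^{-1}$, so that near $p$
\[
H(p,q)=\frac{1}{2n(n-2)(n-4)\omega_{n}}\,\overline{pq}^{\,4-n}+(\text{terms vanishing as }q\to p),
\]
the leading coefficient being exactly the fundamental solution constant of $\Delta^{2}$ on $\mathbb{R}^{n}$ (as it must, since $P=\Delta^{2}+(\text{lower order})$). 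Thus $H$ carries the full singularity of $G_{P,p}$ and contributes no constant term, so the $r^{0}$--coefficient of $G_{P,p}$ is produced entirely by the tail, and $A=c_{n}\lim_{q\to p}\sum_{k\geq 1}(\Gamma_{k}\ast H)(p,q)$.

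The technical heart is therefore to show that $q\mapsto\sum_{k\geq 1}(\Gamma_{k}\ast H)(p,q)$ extends continuously to $q=p$, with an $O(r)$ remainder and no logarithmic term; this is exactly where the dimension restriction (or local conformal flatness) is used. By the sharp estimate (\ref{eq3.9}), $\Gamma_{1}(p,s)=O(\overline{ps}^{\,4-n})$ and $H(s,q)=O(\overline{sq}^{\,4-n})$, so near the diagonal the first convolution behaves like $\int\overline{ps}^{\,4-n}\,\overline{sq}^{\,4-n}\,d\mu(s)$, whose singularity is of order $\overline{pq}^{\,8-n}$; this is finite and H\"older of class $C^{0,\min(1,8-n)}$ precisely when $8-2n>-n$, i.e.\ $n\leq 7$, giving a genuine $O(r)$ remainder (borderline at $n=7$), and the higher convolutions $\Gamma_{k}\ast H$ $(k\geq 2)$ are then strictly more regular. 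In the locally conformally flat case one instead deforms $g$ conformally so that it is flat in a neighborhood of $p$; there $P_{g}=\Delta^{2}$, hence $G_{P,p}-\frac{1}{2n(n-2)(n-4)\omega_{n}}r^{4-n}$ is biharmonic and bounded on a punctured ball, extends smoothly, and the expansion follows in any dimension, $A$ being a conformal invariant by the transformation law (\ref{eq3.1}). Either way one arrives at $G_{P,p}=\frac{1}{2n(n-2)(n-4)\omega_{n}}(r^{4-n}+A+O(r))$ with $A=c_{n}\sum_{k\geq 1}(\Gamma_{k}\ast H)(p,p)$.

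The sign and the rigidity are now immediate from the manifest positivity of the series. Indeed $\Gamma_{1}\geq 0$ by its definition (\ref{eq3.7}) --- a positive constant times $G_{L}^{(n-4)/(n-2)}|Rc_{G_{L,p}^{4/(n-2)}g}|_{g}^{2}$ --- so every iterate $\Gamma_{k}$ is $\geq 0$; since $H>0$, each $(\Gamma_{k}\ast H)(p,p)\geq 0$, whence $A\geq 0$. If $A=0$ then $(\Gamma_{1}\ast H)(p,p)=\int_{M}\Gamma_{1}(p,s)H(s,p)\,d\mu(s)=0$, and $H(\cdot,p)>0$ forces $\Gamma_{1}(p,\cdot)\equiv 0$, i.e.\ $Rc_{G_{L,p}^{4/(n-2)}g}\equiv 0$; then $(M\backslash\{p\},G_{L,p}^{4/(n-2)}g)$ is Ricci--flat and asymptotically flat, so by the equality case of relative volume comparison it is isometric to $\mathbb{R}^{n}$, and $(M,g)$ is conformally equivalent to $S^{n}$ --- the same argument used to prove the last assertion of Theorem~\ref{thm3.3}. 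Conversely, on the round $S^{n}$ one has $Rc_{G_{L,p}^{4/(n-2)}g}\equiv 0$, hence all $\Gamma_{k}\equiv 0$, $G_{P,p}=H$, and $A=0$.

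The main obstacle is the expansion step: a sufficiently fine analysis of $P$ in conformal normal coordinates --- equivalently, of the tail $\sum_{k}\Gamma_{k}\ast H$ at the diagonal --- to exhibit the constant $A$ with an honest $O(r)$ error and to rule out a logarithm. The restriction $n\leq 7$ is forced because the would-be logarithm is governed by $\int\overline{ps}^{\,4-n}\overline{sq}^{\,4-n}\,d\mu(s)$, marginally divergent at $n=8$; this is the content of the calculations in \cite{GM,HuR}, whose ``positivity of mass'' argument is, in the present framework, subsumed by the nonnegativity of the series (\ref{eq3.13}). An alternative identifies $A$, up to a positive factor, with the ADM mass of a scalar--flat asymptotically flat metric built from (\ref{eq3.2}), and appeals to the classical positive mass theorem, available in the same range of dimensions (cf.\ \cite[Section 6]{HY4}).
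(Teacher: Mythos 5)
Your overall strategy coincides with the paper's: use (\ref{eq3.8}) and Theorem \ref{thm3.3} to write $G_{P,p}=H_p+\sum_{k\geq 1}(\Gamma _k\ast H)(p,\cdot)$, note that $H_p$ carries the full $r^{4-n}$ singularity with no constant term, identify $A$ (up to the positive factor $2n(n-2)(n-4)\omega _n$, not $c_n$ as you wrote --- a harmless slip since both constants are positive) with $\int_M G_P(p,q)\Gamma _1(p,q)\,d\mu (q)=\sum_{k\geq 1}(\Gamma _k\ast H)(p,p)$, and deduce $A\geq 0$ from $G_P\geq H>0$ and $\Gamma _1\geq 0$, with the rigidity coming from $\Gamma _1(p,\cdot )\equiv 0\Rightarrow Rc_{G_{L,p}^{4/(n-2)}g}\equiv 0$ and relative volume comparison. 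This is exactly the computation the paper carries out after the statement, and that part of your write-up is correct.

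The gap is in the step you yourself call the technical heart: producing the expansion with an honest $O(r)$ error, which the paper does not prove but imports from \cite{GM,HuR} (together with the expansion of $G_{L,p}$ from \cite{LP}). First, your input $G_{L,p}=\frac{1}{4n(n-1)\omega _n}\left( r^{2-n}+O(1)\right)$ is not available in conformal normal coordinates when $n=7$ (the Weyl correction is of size $r^{6-n}=r^{-1}$); the statement the paper quotes is $O(r^{-1})$, which still kills the constant term of $H_p$, so this is reparable. More seriously, your regularity argument for the tail, based only on the crude bound (\ref{eq3.9}) $\Gamma _1(p,q)=O(\overline{pq}^{\,4-n})$ and $H(s,q)=O(\overline{sq}^{\,4-n})$, does not yield the claimed $O(r)$ remainder at $n=7$: estimating $\left\vert (\Gamma _1\ast H)(p,q)-(\Gamma _1\ast H)(p,p)\right\vert$ via the off-diagonal gradient bound $\left\vert \nabla _qH(s,q)\right\vert =O(\overline{sq}^{\,3-n})$ leads to $\int_M\overline{ps}^{\,7-2n}d\mu (s)$, which diverges for $n\geq 7$, so one only gets $O\left( r\log \frac{1}{r}\right)$ there; you flag this as ``borderline'' but do not close it, and $n=7$ is part of the theorem. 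Moreover, the dimension restriction is not really a convolution-counting phenomenon: what actually enters in \cite{GM,HuR,HY4} is the fine decay of $\left\vert Rc_{G_{L,p}^{4/(n-2)}g}\right\vert ^2$ near the pole and the Weyl-tensor terms in the conformal-normal-coordinate expansions, and it is these that both make $O(r)$ attainable for $n\leq 7$ or locally conformally flat metrics and obstruct the expansion for general $n\geq 8$. Likewise, in your locally conformally flat branch the boundedness of $G_{P,p}-\frac{1}{2n(n-2)(n-4)\omega _n}r^{4-n}$ on a punctured ball is asserted, but it is essentially the point at issue. If, as the paper does, you simply quote the expansion from \cite{GM,HuR} (your closing remark in effect concedes this), the remainder of your argument is correct and matches the paper's proof.
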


Indeed following \cite[Section 6]{HY4} we note that under the assumption of
Theorem \ref{thm3.5} (see \cite{LP})%
\begin{equation*}
G_{L,p}=\frac{1}{4n\left( n-1\right) \omega _{n}}\left( r^{2-n}+O\left(
r^{-1}\right) \right) .
\end{equation*}%
Let $H_{p}\left( q\right) =H\left( p,q\right) $, then%
\begin{equation*}
G_{P,p}-H_{p}=\frac{A}{2n\left( n-2\right) \left( n-4\right) \omega _{n}}%
+O\left( r\right) .
\end{equation*}%
It follows from (\ref{eq3.8}) that%
\begin{equation*}
P\left( G_{P,p}-H_{p}\right) \left( q\right) =\Gamma _{1}\left( p,q\right) .
\end{equation*}%
Hence%
\begin{eqnarray*}
&&A \\
&=&2n\left( n-2\right) \left( n-4\right) \omega _{n}\int_{M}G_{P}\left(
p,q\right) \Gamma _{1}\left( p,q\right) d\mu \left( q\right) \\
&=&2^{\frac{2\left( n-4\right) }{n-2}}n^{\frac{n-4}{n-2}}\left( n-1\right) ^{%
\frac{n-4}{n-2}}\left( n-2\right) ^{-2}\left( n-4\right) \omega _{n}^{\frac{%
n-4}{n-2}}\int_{M}G_{P,p}G_{L,p}^{\frac{n-4}{n-2}}\left\vert Rc_{G_{L,p}^{%
\frac{4}{n-2}}g}\right\vert _{g}^{2}d\mu .
\end{eqnarray*}%
This is exactly the formula proven in \cite{HuR}. Theorem \ref{thm3.5}
follows from this calculation. With Theorem \ref{thm3.1}, \ref{thm3.3} and %
\ref{thm3.5} at hand, we are able to give the first partial solution to
Problem \ref{prob3.3}.

\begin{theorem}[\protect\cite{GM, HY5}]
\label{thm3.6}Let $\left( M,g\right) $ be a smooth compact $n$ dimensional
Riemannian manifold with $n\geq 5$, $Y\left( g\right) >0$, $Y_{4}\left(
g\right) >0$, $r_{\sigma }\left( T_{\Gamma _{1}}\right) <1$, then

\begin{enumerate}
\item $Y_{4}\left( g\right) \leq Y_{4}\left( S^{n}\right) $, and equality
holds if and only if $\left( M,g\right) $ is conformally diffeomorphic to
the standard sphere.

\item $Y_{4}\left( g\right) $ is always achieved. Any minimizer must be
smooth and cannot change sign. In particular we can find a constant $Q$
curvature metric in the conformal class.

\item If $\left( M,g\right) $ is not conformally diffeomorphic to the
standard sphere, then the set of all minimizers $u$ for $Y_{4}\left(
g\right) $, after normalizing with $\left\Vert u\right\Vert _{L^{\frac{2n}{%
n-4}}}=1$, is compact in $C^{\infty }$ topology.
\end{enumerate}
\end{theorem}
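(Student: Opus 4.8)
The plan is to run the Aubin--Schoen variational scheme for the critical quotient (\ref{eq3.19}), with the fourth order operator $P$ in place of the conformal Laplacian and with the positivity of the Green's function replacing the maximum principle. Throughout I may use that $r_{\sigma }(T_{\Gamma _{1}})<1$ gives, via \thmref{thm3.3}, that $\ker P=0$ and $G_{P}\geq H>0$, while $Y_{4}(g)>0$ makes the quadratic form $E$ coercive on $H^{2}(M)$. First I would establish the rough bound $Y_{4}(g)\leq Y_{4}(S^{n})$ by inserting into (\ref{eq3.19}) a cutoff of the flat extremal $U_{\varepsilon }(x)=\bigl(\varepsilon /(\varepsilon ^{2}+|x|^{2})\bigr)^{(n-4)/2}$ written in conformal normal coordinates at a point; the leading term of the quotient is $Y_{4}(S^{n})$ and the curvature contributions are lower order in $\varepsilon $. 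The decisive point for part (1) is the strict inequality: if $(M,g)$ is not conformally diffeomorphic to $S^{n}$ then $Y_{4}(g)<Y_{4}(S^{n})$. When $n\in \{5,6,7\}$, or when $g$ is locally conformally flat, I would take a truncation of $G_{P,p}$ itself as test function; by \thmref{thm3.5} its expansion at $p$ carries the constant $A\geq 0$, and $A>0$ exactly because $(M,g)$ is not the round sphere, so the standard computation turns $A$ into a strictly negative $O(\varepsilon ^{n-4})$ correction. When $n\geq 8$ and $g$ is not locally conformally flat, I would instead concentrate the flat bubble at a point with $W\neq 0$ and read off a strictly negative correction proportional to $|W(p)|^{2}$. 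The equality assertion in (1) then follows: conformal invariance of $Y_{4}$ gives one direction and the strict inequality the other.

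For part (2), if $(M,g)$ is conformal to $S^{n}$ the minimizers are the explicit pulled back extremals, smooth and of one sign; otherwise $Y_{4}(g)<Y_{4}(S^{n})$. I would obtain a minimizer by subcritical approximation: for $2\leq q<2n/(n-4)$ minimize $E(u)/\Vert u\Vert _{q}^{2}$ (compact embedding, $E$ coercive), producing $u_{q}$, and then let $q\uparrow 2n/(n-4)$. Coercivity gives a uniform $H^{2}$ bound, and strictness rules out concentration in the limit (a bubble would cost energy at least $Y_{4}(S^{n})>Y_{4}(g)$), so along a subsequence $u_{q}\to u$ strongly in $H^{2}$ with $u\not\equiv 0$ a minimizer of (\ref{eq3.19}). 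To see $u$ has constant sign I would write $f=Pu=Y_{4}(g)\,|u|^{8/(n-4)}u$ and set $\overline{u}=T_{G_{P}}(|f|)$; then $\overline{u}\geq |u|\geq 0$ pointwise, $\overline{u}>0$ everywhere since $G_{P}>0$ and $f\not\equiv 0$, and by H\"older $E(\overline{u})=\int_{M}|f|\,\overline{u}\,d\mu \leq \Vert f\Vert _{2n/(n+4)}\Vert \overline{u}\Vert _{2n/(n-4)}=Y_{4}(g)\Vert \overline{u}\Vert _{2n/(n-4)}$, whence $E(\overline{u})/\Vert \overline{u}\Vert _{2n/(n-4)}^{2}\leq Y_{4}(g)/\Vert \overline{u}\Vert _{2n/(n-4)}\leq Y_{4}(g)$; minimality forces $\Vert \overline{u}\Vert _{2n/(n-4)}=1$ and equality in H\"older, hence $\overline{u}=|u|$, so $|u|$ has a continuous strictly positive representative and $u$ is of one sign, say $u>0$. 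Elliptic bootstrap on $Pu=Y_{4}(g)u^{(n+4)/(n-4)}$ gives $u\in C^{\infty }$, and $\widetilde{g}=u^{4/(n-4)}g$ satisfies $\widetilde{Q}=\tfrac{2}{n-4}u^{-(n+4)/(n-4)}Pu=\tfrac{2}{n-4}Y_{4}(g)$, a constant.

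For part (3), assuming $(M,g)$ is not conformal to $S^{n}$, I would take normalized minimizers $u_{k}$ and run the blow-up analysis for $Pu_{k}=Y_{4}(g)u_{k}^{(n+4)/(n-4)}$. Since each $u_{k}$ is a minimizer, its total energy equals $Y_{4}(g)<Y_{4}(S^{n})$, so energy quantization excludes any concentration point; hence $\{u_{k}\}$ is bounded in $H^{2}$, converges weakly and then strongly in $H^{2}$ to a minimizer, and elliptic regularity upgrades this to $C^{\infty }$ convergence. The limit is positive by part (2), which through $C^{0}$ convergence also supplies a uniform positive lower bound along the sequence.

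I expect the main obstacle to be the strict inequality in the range not covered by the positive mass statement \thmref{thm3.5}, that is, extracting the correctly signed $|W(p)|^{2}$ correction for the fourth order functional: in the energy expansion the lower order curvature terms are more numerous and more delicate than in the classical Yamabe case, and one must check that the truncation and the choice of conformal normal coordinates contribute nothing of the same order. The blow-up analysis behind part (3) is technical as well, but for minimizers the energy-quantization argument suffices once the strict inequality is in hand.
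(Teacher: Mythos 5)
Your outline is the primal Aubin--Schoen scheme, and it assembles exactly the ingredients this survey points to for Theorem~\ref{thm3.6} (the paper itself gives no proof, only the reduction to \cite{GM,HY5}): Theorem~\ref{thm3.3} for $\ker P=0$ and $G_P\geq H>0$, Theorem~\ref{thm3.5} for the mass $A\geq 0$ with rigidity, and your one-sign argument for minimizers via $\overline{u}=T_{G_P}\left( \left\vert Pu\right\vert \right) $ is precisely the observation of \cite{R} quoted before Theorem~\ref{thm3.4}; the subcritical approximation, bootstrap, and the blow-up argument for part (3) are standard once the strict gap is available. Two caveats on execution: in the mass regime the test function should be the bubble glued to $\varepsilon ^{\frac{n-4}{2}}G_{P,p}$ (Schoen's construction), not a bare truncation of $G_{P,p}$; and note your primal route uses $Y_4(g)>0$ (i.e. $P>0$) for coercivity, which is a hypothesis here, so that is legitimate.

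The genuine gap is the step you yourself flag: the strict inequality $Y_4(g)<Y_4(S^n)$ for $n\geq 8$ at a point with $W(p)\neq 0$. You assert a strictly negative correction proportional to $\left\vert W(p)\right\vert ^2$ (it is $\varepsilon ^4\log \frac{1}{\varepsilon }$ when $n=8$), but for the fourth order functional this sign is not automatic: in conformal normal coordinates $Q(p)=\frac{1}{12(n-1)}\left\vert W(p)\right\vert ^2>0$, so the term $\frac{n-4}{2}\int_M Qu_\varepsilon ^2d\mu $ in (\ref{eq3.18}) pushes the quotient the \emph{wrong} way, and one must carry out the full Esposito--Robert type expansion of all the curvature terms to see that the favorable contributions (e.g. $(n-2)\int_M J\left\vert \nabla u_\varepsilon \right\vert ^2d\mu $ with $\Delta R(p)=-\frac{1}{6}\left\vert W(p)\right\vert ^2$) dominate; this is exactly the case not covered by Theorem~\ref{thm3.5}, and you give no argument for it. It is also worth contrasting with the route actually emphasized here and carried out in \cite{HY5} (reflected in Theorems \ref{thm3.8} and \ref{thm3.9}, motivated by \cite{HWY1,HWY2}): one works with the dual functional $\Theta _4$ and the integral operator $T_{G_P}$, where the comparison $G_P\geq H$ together with the rigidity in Theorem~\ref{thm3.3} (equality at a single pair $p\neq q$ forces the round sphere) plays the role of positive mass in every dimension $n\geq 5$, positivity of extremals is immediate from $G_P>0$, and no separate local Weyl computation is needed; your approach can be completed, but only by importing that high-dimensional test-function expansion from \cite{GM} and the references therein.
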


It is worth pointing out that for a locally conformally flat manifold with
positive Yamabe invariant and Poincare exponent less than $\frac{n-4}{2}$
(see \cite{SY}), Theorem \ref{thm3.6} was proved in \cite{QR2} by apriori
estimates (using method of moving planes for integral equations developed in 
\cite{CLO}) and connecting the equation to Yamabe equation through a path of
integral equations.

Note that $Y_{4}\left( g\right) >0$ is the same thing as $P>0$. Either one
of the following conditions guarantee the positivity of Paneitz operator

\begin{itemize}
\item \cite{GM, XY1}: $n\geq 5$, $R>0$, $Q\geq 0$ and not identically zero;

\item \cite[Theorem 1.6]{CHY}: $n\geq 5$, $J\geq 0$, $\sigma _{2}\left(
A\right) \geq 0$ and $\left( M,g\right) $ is not Ricci flat.
\end{itemize}

In applications we are usually interested in metrics not just with $Q>0$,
but with both $R>0$ and $Q>0$. This leads us to a question similar to
Problem \ref{prob3.2}.

\begin{problem}[{\protect\cite[Problem 1.1]{GHL}}]
\label{prob3.4}For a smooth compact Riemannian manifold with dimension at
least $5$, can we find a conformal invariant condition which is equivalent
to the existence of a conformal metric with positive scalar and $Q$
curvature?
\end{problem}

\begin{theorem}[\protect\cite{GHL}]
\label{thm3.7}Let $\left( M,g\right) $ be a smooth compact Riemannian
manifold with dimension $n\geq 6$. Denote%
\begin{equation*}
Y_{4}^{+}\left( g\right) =\frac{n-4}{2}\inf_{\widetilde{g}\in \left[ g\right]
}\frac{\int_{M}\widetilde{Q}d\widetilde{\mu }}{\left( \widetilde{\mu }\left(
M\right) \right) ^{\frac{n-4}{n}}}=\inf_{\substack{ u\in C^{\infty }\left(
M\right)  \\ u>0}}\frac{\int_{M}Pu\cdot ud\mu }{\left\Vert u\right\Vert _{L^{%
\frac{2n}{n-4}}}^{2}}.
\end{equation*}%
and%
\begin{equation*}
Y_{4}^{\ast }\left( g\right) =\frac{n-4}{2}\inf_{\substack{ \widetilde{g}\in %
\left[ g\right]  \\ \widetilde{R}>0}}\frac{\int_{M}\widetilde{Q}d\widetilde{%
\mu }}{\left( \widetilde{\mu }\left( M\right) \right) ^{\frac{n-4}{n}}}.
\end{equation*}%
If $Y\left( g\right) >0$ and $Y_{4}^{\ast }\left( g\right) >0$, then there
exists a metric $\widetilde{g}\in \left[ g\right] $ satisfying $\widetilde{R}%
>0$ and $\widetilde{Q}>0$. In particular, $P>0$, the Green's function $%
G_{P}>0$, and $Y_{4}\left( g\right) $ is achieved at a positive smooth
function $u$ with $R_{u^{\frac{4}{n-4}}g}>0$ and $Q_{u^{\frac{4}{n-4}%
}g}=const$. Moreover,%
\begin{equation*}
Y_{4}\left( g\right) =Y_{4}^{+}\left( g\right) =Y_{4}^{\ast }\left( g\right)
.
\end{equation*}
\end{theorem}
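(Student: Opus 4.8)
The plan is to reduce everything to the construction of one conformal metric with positive scalar curvature and nonnegative, not identically zero, $Q$ curvature, and then to feed that metric into Theorems~\ref{thm3.4} and~\ref{thm3.6} and Remark~\ref{rmk3.1}.

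First, reductions. Since $Y\left( g\right) >0$, $P>0$, $\ker P=0$, $G_{P}>0$, $Y_{4}$, $Y_{4}^{+}$, $Y_{4}^{\ast }$ and $r_{\sigma }\left( T_{\Gamma _{1}}\right) $ are all conformal invariants (the last by (\ref{eq3.12})), so we may replace $g$ by any metric in $\left[ g\right] $; in particular we may assume $R_{g}>0$. For $\widetilde{g}=\rho ^{\frac{4}{n-4}}g$ the laws (\ref{eq1.7})--(\ref{eq1.8}) give $\frac{n-4}{2}\frac{\int_{M}\widetilde{Q}\,d\widetilde{\mu }}{\widetilde{\mu }\left( M\right) ^{\frac{n-4}{n}}}=\frac{E\left( \rho \right) }{\Vert \rho \Vert _{L^{\frac{2n}{n-4}}}^{2}}$, while $\widetilde{R}>0$ is equivalent to $L_{g}( \rho ^{\frac{n-2}{n-4}}) >0$. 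As each of $Y_{4},Y_{4}^{+},Y_{4}^{\ast }$ is then the infimum of one Rayleigh quotient over a progressively smaller admissible set, $Y_{4}\left( g\right) \leq Y_{4}^{+}\left( g\right) \leq Y_{4}^{\ast }\left( g\right) $ is automatic; the real content is that $Y_{4}^{\ast }\left( g\right) >0$ forces the hypotheses of Theorem~\ref{thm3.6} plus positivity of $R$ for the extremal metric, and the reverse inequality $Y_{4}\left( g\right) \geq Y_{4}^{\ast }\left( g\right) $.

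\textbf{Main step (the crux).} If $Y\left( g\right) >0$ and $Y_{4}^{\ast }\left( g\right) >0$, there is $\widehat{g}\in \left[ g\right] $ with $R_{\widehat{g}}>0$ and $Q_{\widehat{g}}\geq 0$, $Q_{\widehat{g}}\not\equiv 0$. I would obtain $\widehat{g}$ from the constrained minimization of $E\left( u\right) $ over $u\in H^{2}\left( M\right) $, $u\geq 0$, $\Vert u\Vert _{L^{\frac{2n}{n-4}}}=1$, subject to $R_{u^{\frac{4}{n-4}}g}\geq 0$, whose value is $Y_{4}^{\ast }\left( g\right) >0$. A G\aa rding type lower bound for $P$ gives an $H^{2}$ bound along a minimizing sequence, and pairing the constraint $L_{g}( u^{\frac{n-2}{n-4}}) \geq 0$ with $u^{\frac{n-2}{n-4}}$ together with $Y\left( g\right) >0$ keeps the scalar part from degenerating ($\int_{M}R_{u^{\frac{4}{n-4}}g}\,d\mu _{u^{\frac{4}{n-4}}g}\geq Y\left( g\right) >0$). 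The genuine difficulties are loss of compactness at the critical exponent $\frac{2n}{n-4}$ and the possibility that the limit satisfies the constraint only with $R$ vanishing somewhere; these are handled as in the Yamabe problem and its fourth order analogue --- via the sharp value $Y_{4}\left( S^{n}\right) $ and the positive mass input of Theorems~\ref{thm3.4} and~\ref{thm3.5} --- and this, with the constraint analysis, is where $n\geq 6$ is used. For a minimizer $u_{0}$ in the interior of the constraint, i.e. $R_{u_{0}^{\frac{4}{n-4}}g}>0$ everywhere (which also forces $u_{0}>0$ by the strong maximum principle for $L_{g}$, since $\lambda _{1}\left( L_{g}\right) >0$), $u_{0}$ solves $Pu_{0}=Y_{4}^{\ast }\left( g\right) u_{0}^{\frac{n+4}{n-4}}$, so $Q_{u_{0}^{\frac{4}{n-4}}g}=\frac{2}{n-4}Y_{4}^{\ast }\left( g\right) >0$ is constant and $\widehat{g}=u_{0}^{\frac{4}{n-4}}g$ works; the borderline case where $R$ touches zero requires a further argument, again using $Y_{4}^{\ast }\left( g\right) >0$. \textbf{I expect this main step to be the principal obstacle}: it is the only place the heavy analytic machinery (critical Sobolev compactness, positive mass) enters, and making the scalar curvature constraint interact cleanly with the minimization is delicate.

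Granting the main step, the rest is bookkeeping. From $R_{\widehat{g}}>0$ and $Q_{\widehat{g}}\geq 0$, $\not\equiv 0$, Theorem~\ref{thm3.4} in $\widehat{g}$ gives $P>0$ (so $Y_{4}\left( g\right) >0$), $\ker P=0$, $G_{P}>0$; Remark~\ref{rmk3.1} in $\widehat{g}$ (where $Y( \widehat{g}) =Y\left( g\right) >0$, $Q_{\widehat{g}}\geq 0$, $\not\equiv 0$) with $r_{\sigma }\left( T_{\Gamma _{1}}\right) =r_{\sigma }( T_{\widehat{\Gamma }_{1}}) $ gives $r_{\sigma }\left( T_{\Gamma _{1}}\right) <1$. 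Hence Theorem~\ref{thm3.6} applies: $Y_{4}\left( g\right) $ is achieved at a smooth $u>0$, normalized by $\Vert u\Vert _{L^{\frac{2n}{n-4}}}=1$, with $Pu=Y_{4}\left( g\right) u^{\frac{n+4}{n-4}}$, so $Q_{u^{\frac{4}{n-4}}g}=\frac{2}{n-4}Y_{4}\left( g\right) $ is a positive constant. Writing $u=\widehat{u}\widehat{\rho }$ with $\widehat{g}=\widehat{\rho }^{\frac{4}{n-4}}g$, one has $P_{\widehat{g}}\widehat{u}=\widehat{\rho }^{-\frac{n+4}{n-4}}P_{g}u\geq 0$, so a second use of Theorem~\ref{thm3.4} in $\widehat{g}$ gives $R_{u^{\frac{4}{n-4}}g}=R_{\widehat{u}^{\frac{4}{n-4}}\widehat{g}}>0$. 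Finally $u$ is positive with $R_{u^{\frac{4}{n-4}}g}>0$, hence admissible for $Y_{4}^{+}\left( g\right) $ and $Y_{4}^{\ast }\left( g\right) $, so $Y_{4}\left( g\right) =E\left( u\right) /\Vert u\Vert _{L^{\frac{2n}{n-4}}}^{2}\geq Y_{4}^{\ast }\left( g\right) \geq Y_{4}^{+}\left( g\right) \geq Y_{4}\left( g\right) $, forcing $Y_{4}\left( g\right) =Y_{4}^{+}\left( g\right) =Y_{4}^{\ast }\left( g\right) $.
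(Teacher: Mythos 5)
First, a point of comparison: the paper does not prove Theorem \ref{thm3.7} at all --- it is quoted from \cite{GHL} --- so there is no internal argument to measure your proposal against. Judged on its own, your second phase is correct and efficient: given some $\widehat{g}=\widehat{\rho }^{\frac{4}{n-4}}g$ with $R_{\widehat{g}}>0$ and $Q_{\widehat{g}}\geq 0$, $\not\equiv 0$, Theorem \ref{thm3.4} gives $P>0$ and $G_{P}>0$, Remark \ref{rmk3.1} together with the similarity relation (\ref{eq3.12}) gives $r_{\sigma }\left( T_{\Gamma _{1}}\right) <1$, Theorem \ref{thm3.6} then produces a smooth positive minimizer $u$ of $Y_{4}\left( g\right) $ with $Q_{u^{\frac{4}{n-4}}g}$ a positive constant, and your second application of Theorem \ref{thm3.4} in the background $\widehat{g}$, applied to $\widehat{u}=u/\widehat{\rho }$ for which $P_{\widehat{g}}\widehat{u}>0$, correctly yields $R_{u^{\frac{4}{n-4}}g}>0$; the chain $Y_{4}\leq Y_{4}^{+}\leq Y_{4}^{\ast }\leq E\left( u\right) /\left\Vert u\right\Vert _{L^{\frac{2n}{n-4}}}^{2}=Y_{4}$ then closes as you say.

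The genuine gap is your ``main step'', which is not an auxiliary lemma: producing a conformal metric with $R>0$ and $Q\geq 0$, $\not\equiv 0$, from $Y\left( g\right) >0$ and $Y_{4}^{\ast }\left( g\right) >0$ is the entire content of the cited theorem, and your sketch of it does not go through as written. Two concrete problems. (i) Circularity: you propose to control the loss of compactness of the constrained minimization ``via the sharp value $Y_{4}\left( S^{n}\right) $ and the positive mass input of Theorems \ref{thm3.4} and \ref{thm3.5}''. But Theorem \ref{thm3.5} assumes $r_{\sigma }\left( T_{\Gamma _{1}}\right) <1$, which by Theorem \ref{thm3.3} is equivalent to the existence of a conformal metric of positive $Q$ curvature --- essentially what the main step is supposed to produce --- and Theorem \ref{thm3.4} assumes $R>0$ and $Q\geq 0$, i.e. exactly the conclusion of the main step. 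Neither is available at that stage, so the Green's function/positive mass machinery cannot legitimately be invoked there. (ii) The constraint analysis is deferred precisely where the theorem is hard: the admissible set is cut out by the nonlinear differential inequality $L_{g}\left( u^{\frac{n-2}{n-4}}\right) \geq 0$, a minimizer may sit on the boundary of this set, in which case the Euler--Lagrange relation is only a variational inequality and the identity $Pu_{0}=Y_{4}^{\ast }\left( g\right) u_{0}^{\frac{n+4}{n-4}}$ is unjustified; your ``borderline case where $R$ touches zero requires a further argument'' is exactly the point where the hypothesis $n\geq 6$ and the real work of \cite{GHL} enter, and no argument is offered (nor any explanation of why $n=5$ should fail, which the paper notes is still open). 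As it stands, the proposal reduces Theorem \ref{thm3.7} to a statement that is no easier than Theorem \ref{thm3.7} itself.
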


\begin{corollary}[\protect\cite{GHL}]
\label{cor3.1}Let $\left( M,g\right) $ be a smooth compact Riemannian
manifold with dimension $n\geq 6$. Then the following statements are
equivalent

\begin{enumerate}
\item $Y\left( g\right) >0,P>0$.

\item $Y\left( g\right) >0,Y_{4}^{\ast }\left( g\right) >0$.

\item there exists a metric $\widetilde{g}\in \left[ g\right] $ satisfying $%
\widetilde{R}>0$ and $\widetilde{Q}>0$.
\end{enumerate}
\end{corollary}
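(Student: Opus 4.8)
The plan is to establish the cycle $(3)\Rightarrow(1)\Rightarrow(2)\Rightarrow(3)$. The only implication carrying genuine analytic content is $(2)\Rightarrow(3)$, and this is exactly \thmref{thm3.7}: its hypotheses are precisely $Y(g)>0$ and $Y_{4}^{\ast}(g)>0$, and among its conclusions is the existence of $\widetilde{g}\in[g]$ with $\widetilde{R}>0$ and $\widetilde{Q}>0$. So for that step there is nothing further to prove.

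For $(3)\Rightarrow(1)$, suppose $\widetilde{g}\in[g]$ satisfies $\widetilde{R}>0$ and $\widetilde{Q}>0$. Since there is a conformal metric of positive scalar curvature, $\lambda_{1}(L_{\widetilde{g}})>0$, hence $Y(g)=Y(\widetilde{g})>0$ by the conformal invariance of the Yamabe invariant (equivalently, of the sign of $\lambda_{1}(L)$) recalled in \secref{sec3}. For the Paneitz operator I would apply \thmref{thm3.4} to $\widetilde{g}$ --- legitimate since $n\geq 6\geq 5$, $\widetilde{R}>0$, and $\widetilde{Q}>0$ is in particular $\geq 0$ and not identically zero --- to obtain $P_{\widetilde{g}}>0$. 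Since $Y_{4}(g)$ is a conformal invariant and $Y_{4}(g)>0$ is equivalent to $P>0$, we conclude $P_{g}>0$, so $(1)$ holds.

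For $(1)\Rightarrow(2)$, assume $Y(g)>0$ and $P>0$. Because $Y(g)>0$, the solution of the Yamabe problem provides a conformal metric with (constant) positive scalar curvature, so the admissible family in the definition of $Y_{4}^{\ast}(g)$ is nonempty. Writing any such metric as $\widetilde{g}=u^{\frac{4}{n-4}}g$ with $u\in C^{\infty}(M)$, $u>0$, the transformation laws (\ref{eq1.7})--(\ref{eq1.8}) together with $E(u)=\int_{M}Pu\cdot u\,d\mu$ give
\begin{equation*}
\frac{n-4}{2}\,\frac{\int_{M}\widetilde{Q}\,d\widetilde{\mu}}{\bigl(\widetilde{\mu}(M)\bigr)^{\frac{n-4}{n}}}=\frac{E(u)}{\|u\|_{L^{\frac{2n}{n-4}}}^{2}},
\end{equation*}
so $Y_{4}^{\ast}(g)$ is an infimum of the very quotient defining $Y_{4}(g)$, taken over a subset of $H^{2}(M)\setminus\{0\}$; hence $Y_{4}^{\ast}(g)\geq Y_{4}(g)$. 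Since $P>0$ is equivalent to $Y_{4}(g)>0$, we get $Y_{4}^{\ast}(g)\geq Y_{4}(g)>0$, which together with $Y(g)>0$ is $(2)$.

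I do not expect a real obstacle: essentially all the work sits in \thmref{thm3.7}, and the other two implications are bookkeeping with the conformal invariance of the signs of $Y$ and $Y_{4}$, the Yamabe solution, and \thmref{thm3.4}. The only points requiring a little care are matching the dimension hypotheses ($n\geq 6$ comfortably exceeds the $n\geq 5$ needed by \thmref{thm3.4}) and verifying the displayed identity relating the quotient defining $Y_{4}^{\ast}$ to $E(u)/\|u\|_{L^{\frac{2n}{n-4}}}^{2}$.
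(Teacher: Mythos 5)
Your proof is correct and follows the route the paper intends: the corollary is stated as an immediate consequence of Theorem~\ref{thm3.7} (which gives $(2)\Rightarrow(3)$ and the identity $Y_{4}=Y_{4}^{\ast}$), combined with the positivity criterion of Theorem~\ref{thm3.4} (equivalently the \cite{GM, XY1} bullet) for $(3)\Rightarrow(1)$ and the elementary comparison $Y_{4}^{\ast}(g)\geq Y_{4}(g)$ via the transformation laws for $(1)\Rightarrow(2)$. Your verification of the quotient identity and of the nonemptiness of the admissible family (using $Y(g)>0$) is exactly the bookkeeping needed, so there is nothing to add.
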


Corollary \ref{cor3.1} answers Problem \ref{prob3.4} for dimension at least $%
6$. It also tells us in Theorem \ref{thm3.6}, condition $r_{\sigma }\left(
T_{\Gamma _{1}}\right) <1$ is implied by the positivity of $Y\left( g\right) 
$ and $Y_{4}\left( g\right) $ when $n\geq 6$. The case $n=5$ still remains
open for Problem \ref{prob3.4}.

\begin{problem}
\label{prob3.5}Let $\left( M,g\right) $ be a smooth compact Riemannian
manifold with dimension $n\geq 5$, do we have%
\begin{equation*}
Y\left( g\right) >0,Q>0\Longrightarrow P>0?
\end{equation*}%
The answer is probably negative.
\end{problem}

This seems to be a subtle question. Indeed from \cite{GM, XY1}, we know when
both $R$ and $Q$ are positive, then $P$ is positive definite. If we have $%
Y\left( g\right) >0$ and $Q>0$ instead, then some conformal metrics have
positive scalar curvature. However the set of metrics with positive scalar
curvature may be disjoint with those with positive $Q$ curvature.
Nevertheless Theorem \ref{thm3.1} tells us $\ker P=0$ and $G_{P}>0$. In \cite%
{HY5}, it is shown this is enough to find a constant $Q$ curvature in the
conformal class. Together with Theorem \ref{thm3.3}, we have another partial
answer to Problem \ref{prob3.3}.

\begin{theorem}[\protect\cite{HY5}]
\label{thm3.8}Let $\left( M,g\right) $ be a smooth compact $n$ dimensional
Riemannian manifold with $n\geq 5$, $Y\left( g\right) >0$, $r_{\sigma
}\left( T_{\Gamma _{1}}\right) <1$, then $\ker P=0$, the Green's function of 
$P$ is positive and there exists a conformal metric $\widetilde{g}$ with $%
\widetilde{Q}=1$.
\end{theorem}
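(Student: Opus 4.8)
The plan: read off $\ker P=0$ and $G_{P}>0$ from Theorem~\ref{thm3.3}, normalize the background metric so that $Q>0$, and then build the constant--$Q$ metric variationally, the argument branching on the sign of the conformal invariant $Y_{4}(g)$. \emph{Stage 1 (easy conclusions and the working tool).} Since $r_{\sigma}(T_{\Gamma _{1}})<1$, Theorem~\ref{thm3.3} immediately gives $\ker P=0$, the strict positivity $G_{P}\geq H>0$, and the expansion $G_{P}=H+\sum _{k\geq 1}\Gamma _{k}\ast H$ with all summands nonnegative. This already yields the first two assertions, and it furnishes the comparison principle used repeatedly below: since $\ker P=0$, every smooth $u$ satisfies $u=T_{G_{P}}(Pu)$, so $Pu\geq 0$ with $u\not\equiv 0$ forces $u=\int _{M}G_{P}(\cdot ,q)(Pu)(q)\,d\mu (q)>0$ everywhere. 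Applying the Krein--Rutman theorem to the compact, positive-kernel operator $T_{G_{P}}$ also produces a positive eigenfunction $\phi$ of $P$ with $P\phi =\lambda \phi$, $\lambda >0$.

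\emph{Stage 2 (normalization and case split).} By Theorem~\ref{thm3.1}, whose hypotheses $Y(g)>0$, $\ker P=0$, $G_{P}>0$ are now in force, there is $\widetilde{g}\in [g]$ with $\widetilde{Q}>0$; replacing $g$ by $\widetilde{g}$ is harmless, as $r_{\sigma }(T_{\Gamma _{1}})$, positivity of $G_{P}$, the invariant $Y_{4}$, and the goal $\widetilde{Q}\equiv 1$ are all conformally natural. So assume $Q>0$ on $M$. The conformal invariant $Y_{4}(g)$ is nonzero, $\ker P=0$ ruling out $Y_{4}(g)=0$. If $Y_{4}(g)>0$, i.e.\ $P>0$, then every hypothesis of Theorem~\ref{thm3.6} holds and part (2) of that theorem provides a smooth positive $u$ with $Q_{u^{4/(n-4)}g}$ equal to a positive constant; a constant rescaling of the metric normalizes it to $1$, and we are done. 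The substantial case is $Y_{4}(g)<0$: then $P$ has finitely many negative eigenvalues (it is elliptic, self-adjoint, with compact resolvent); let $V^{-}\subset H^{2}(M)$ be their eigenspace and $V^{+}=(V^{-})^{\perp }$, on which the quadratic form $E$ is coercive.

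\emph{Stage 3 (a positive critical point via linking).} For $Y_{4}(g)<0$ I look for a positive, non-minimizing critical point of
\[
\mathcal{F}(u)=\tfrac{1}{2}\int _{M}(Pu)u\,d\mu -\tfrac{n-4}{2n}\int _{M}u_{+}^{\frac{2n}{n-4}}\,d\mu ,\qquad u\in H^{2}(M).
\]
I would work first with the subcritical functionals $\mathcal{F}_{s}(u)=\tfrac{1}{2}E(u)-\tfrac{c}{s+1}\int _{M}u_{+}^{s+1}\,d\mu$, where $c=\tfrac{n-4}{2}$ and $1<s\nearrow \tfrac{n+4}{n-4}$; here $H^{2}\hookrightarrow L^{s+1}$ is compact and $\mathcal{F}_{s}$ satisfies the Palais--Smale condition. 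Since $E$ is positive and coercive on a small sphere in $V^{+}$ while $\mathcal{F}_{s}\leq 0$ on the relative boundary of a large box $\{v+t\phi :v\in V^{-},\ \|v\|\leq R,\ 0\leq t\leq R\}$ (the superlinear term eventually dominating, using $\phi >0$), Rabinowitz's linking theorem produces a critical point $\rho _{s}$ at a level $c_{s}>0$. Its Euler--Lagrange equation reads $P\rho _{s}=c\,(\rho _{s,+})^{s}\geq 0$, and $c_{s}>0=\mathcal{F}_{s}(0)$ forces $\rho _{s}\not\equiv 0$, so the comparison principle of Stage~1 gives $\rho _{s}>0$; hence $P\rho _{s}=c\,\rho _{s}^{\,s}$ and $\rho _{s}\in C^{\infty }$ by elliptic regularity.

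\emph{Stage 4 (the limit $s\to \tfrac{n+4}{n-4}$, which is the crux).} From the equation, $\|\rho _{s}\|_{L^{s+1}}^{s+1}=c^{-1}E(\rho _{s})=c^{-1}\tfrac{2(s+1)}{s-1}\,c_{s}$, so a uniform bound $c_{s}\leq \Lambda$ — obtained by choosing the box radii uniformly in $s$ — controls the energy. The essential obstacle is to preclude bubbling, i.e.\ to establish $c_{s}<\tfrac{2}{n}Y_{4}(S^{n})^{n/4}$ uniformly. I would do this by inserting a rescaled standard bubble concentrated at a point $p\in M$ into the linking class and estimating $\max \mathcal{F}_{s}$ on it; the strict gain below $\tfrac{2}{n}Y_{4}(S^{n})^{n/4}$ comes from the positive ``mass'' $\lim _{q\to p}\big(G_{P}(p,q)-H(p,q)\big)=\sum _{k\geq 1}(\Gamma _{k}\ast H)(p,p)\geq 0$, which is strictly positive unless $\Gamma _{1}(p,\cdot )\equiv 0$, i.e.\ unless $(M,g)$ is conformal to $S^{n}$ (a case in which the conclusion is classical). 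Crucially, this positive-mass input is supplied by the series of Theorem~\ref{thm3.3} for \emph{every} $n\geq 5$, whereas the expansion behind Theorem~\ref{thm3.5} needs $n\leq 7$ or local conformal flatness. Once $c_{s}$ lies below the threshold, a subsequence of $\rho _{s}$ converges strongly in $H^{2}$, hence in $C^{\infty }$ after bootstrapping the equation, to a positive $\rho$ solving $P\rho =\tfrac{n-4}{2}\rho ^{\frac{n+4}{n-4}}$; then $\widetilde{g}=\rho ^{4/(n-4)}g$ satisfies $\widetilde{Q}\equiv 1$ by (\ref{eq1.8}). The two genuinely delicate ingredients are the uniform choice of the linking box and, above all, the uniform sub-threshold estimate on $c_{s}$ through the mass term.
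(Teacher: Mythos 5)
Your Stages 1--3 are sound: Theorem \ref{thm3.3} does give $\ker P=0$ and $G_{P}\geq H>0$, the representation $u=T_{G_{P}}(Pu)$ does force positivity of any nonzero solution of $Pu\geq 0$, the trichotomy on $Y_{4}(g)$ is legitimate (a negative eigenvalue gives $Y_{4}<0$, while $P>0$ gives $Y_{4}>0$), and the case $Y_{4}(g)>0$ may indeed be closed by quoting Theorem \ref{thm3.6}. The proof fails, however, exactly where you say the crux is. Your uniform sub-threshold estimate rests on the claim that the ``mass'' $\lim_{q\to p}\bigl(G_{P}(p,q)-H(p,q)\bigr)=\sum_{k\geq 1}(\Gamma _{k}\ast H)(p,p)$ is a finite nonnegative quantity supplied by Theorem \ref{thm3.3} for \emph{every} $n\geq 5$. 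That is not true. By (\ref{eq3.9}), $\Gamma _{1}(p,q)=O\left( \overline{pq}^{4-n}\right)$ while $H(p,q)\approx c\,\overline{pq}^{4-n}$, so $(\Gamma _{1}\ast H)(p,q)$ already blows up like $\overline{pq}^{8-n}$ (logarithmically for $n=8$) as $q\to p$; equivalently, $P\left( G_{P,p}-H_{p}\right) =\Gamma _{1}(p,\cdot )\in L^{q}$ only for $q<n/(n-4)$, which yields continuity of $G_{P,p}-H_{p}$ at the pole only when $n\leq 7$. The statement that $G_{P,p}$ has an expansion with a well-defined constant term $A\geq 0$, vanishing only for the sphere, is precisely Theorem \ref{thm3.5}, and that theorem is restricted to $n=5,6,7$ or local conformal flatness; your remark that the series of Theorem \ref{thm3.3} bypasses this restriction is the step that is wrong. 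Moreover, for $n\geq 8$ and non-locally-conformally-flat metrics the bubble-insertion expansion of the minimax level contains local curvature corrections (Weyl-tensor terms entering at order $\epsilon ^{4}$ and beyond) that dominate the putative mass contribution of order $\epsilon ^{n-4}$ and whose sign is not controlled by $G_{P}\geq H$; so even granting a finite mass, $c_{s}<\frac{2}{n}Y_{4}(S^{n})^{n/4}$ does not follow. A secondary, more repairable gap: you must place the bubble inside an admissible \emph{linking set} and bound the maximum of $\mathcal{F}_{s}$ over that whole set (including its interaction with $V^{-}$), uniformly in $s$; this is asserted rather than proved.

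This is also where your route diverges from the paper's, and the divergence is not incidental. The paper does not attack the $H^{2}$ functional at all when $P$ may be indefinite: it substitutes $f=\rho ^{\frac{n+4}{n-4}}$, rewrites (\ref{eq3.17}) as the integral equation (\ref{eq3.20}), and maximizes the conformally invariant Hardy--Littlewood--Sobolev type functional $\Theta _{4}(g)$ of (\ref{eq3.21}). Positivity of $G_{P}$ forces any maximizer not to change sign, no positivity of $P$ (hence no case split on the sign of $Y_{4}$) is needed, and the extremal analysis of Theorem \ref{thm3.9} works for all $n\geq 5$, with the sphere comparison running in the opposite direction, $\Theta _{4}(g)\geq \Theta _{4}(S^{n})$, and rigidity supplied by $G_{P}\geq H$ with equality only for the round sphere. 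If you keep your direct linking scheme, you can at best hope to recover the result for $n=5,6,7$ or locally conformally flat manifolds (where Theorem \ref{thm3.5} is available), and even there you still owe the uniform minimax estimate; for general $n\geq 5$ you would need to pass to (or reconstruct) the dual integral formulation.
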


Note that if the answer to Problem \ref{prob3.5} is positive, then Theorem %
\ref{thm3.8} would follow from Theorem \ref{thm3.6}. Without knowing the
positivity of Paneitz operator, we can not use the minimization problem (\ref%
{eq3.19}) to find the constant $Q$ curvature metrics. A different approach
was developed in \cite{HY5}. Under the assumption of Theorem \ref{thm3.8},
it follows from Theorem \ref{thm3.3} that $\ker P=0$ and $G_{P}>0$. If we
denote $f=\rho ^{\frac{n+4}{n-4}}$, then equation (\ref{eq3.17}) becomes%
\begin{equation}
T_{G_{P}}f=\frac{2}{n-4}f^{\frac{n-4}{n+4}},\quad f\in C^{\infty }\left(
M\right) ,f>0.  \label{eq3.20}
\end{equation}%
Let%
\begin{equation}
\Theta _{4}\left( g\right) =\sup_{f\in L^{\frac{2n}{n+4}}\left( M\right)
\backslash \left\{ 0\right\} }\frac{\int_{M}T_{G_{P}}f\cdot fd\mu }{%
\left\Vert f\right\Vert _{L^{\frac{2n}{n+4}}}^{2}}.  \label{eq3.21}
\end{equation}%
By (\ref{eq3.1}), we know $\Theta _{4}\left( g\right) $ is a conformal
invariant, moreover it has a nice geometrical description, which is local,
(see \cite[Section 2.1]{HY5})%
\begin{eqnarray}
\Theta _{4}\left( g\right) &=&\frac{2}{n-4}\sup \left\{ \frac{\int_{M}%
\widetilde{Q}d\widetilde{\mu }}{\left\Vert \widetilde{Q}\right\Vert _{L^{%
\frac{2n}{n+4}}\left( M,d\widetilde{\mu }\right) }^{2}}:\widetilde{g}\in %
\left[ g\right] \right\}  \label{eq3.22} \\
&=&\sup_{u\in W^{4,\frac{2n}{n+4}}\left( M\right) \backslash \left\{
0\right\} }\frac{\int_{M}Pu\cdot ud\mu }{\left\Vert Pu\right\Vert _{L^{\frac{%
2n}{n+4}}}^{2}}.  \notag
\end{eqnarray}%
It follows from the classical Hardy-Littlewood-Sobolev inequality $\Theta
_{4}\left( g\right) $ is always finite. The benefit of this formulation is
if $\Theta _{4}\left( g\right) $ is achieved by a maximizer $f$, we deduce
easily from the positivity of $G_{P}$ that $f$ cannot change sign. With
Theorem \ref{thm3.1}, \ref{thm3.3} and \ref{thm3.5} at hands, we have the
following statement about extremal problem for $\Theta _{4}\left( g\right) $:

\begin{theorem}[\protect\cite{HY5}]
\label{thm3.9}Assume $\left( M,g\right) $ is a smooth compact $n$
dimensional Riemannian manifold with $n\geq 5$, $Y\left( g\right) >0$, $%
r_{\sigma }\left( T_{\Gamma _{1}}\right) <1$, then

\begin{enumerate}
\item $\Theta _{4}\left( g\right) \geq \Theta _{4}\left( S^{n}\right) $,
here $S^{n}$ has the standard metric. $\Theta _{4}\left( g\right) =\Theta
_{4}\left( S^{n}\right) $ if and only if $\left( M,g\right) $ is conformally
diffeomorphic to the standard sphere.

\item $\Theta _{4}\left( g\right) $ is always achieved. Any maximizer $f$
must be smooth and cannot change sign. If $f>0$, then after scaling we have $%
G_{P}f=\frac{2}{n-4}f^{\frac{n-4}{n+4}}$ i.e. $Q_{f^{\frac{4}{n+4}}g}=1$.

\item If $\left( M,g\right) $ is not conformally diffeomorphic to the
standard sphere, then the set of all maximizers $f$ for $\Theta _{4}\left(
g\right) $, after normalizing with $\left\Vert f\right\Vert _{L^{\frac{2n}{%
n+4}}}=1$, is compact in the $C^{\infty }$ topology.
\end{enumerate}
\end{theorem}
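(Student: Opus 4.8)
The idea is to treat the extremal problem (\ref{eq3.21}) as a maximization of Hardy--Littlewood--Sobolev type and to run the standard variational scheme for Yamabe-type problems, with the Green's function $G_P$ of the Paneitz operator playing the role the Green's function of the conformal Laplacian plays in the Yamabe problem. Throughout we use that $r_\sigma(T_{\Gamma_1})<1$ already forces $\ker P=0$, $G_P>0$, and the expansion $G_P=H+\sum_{k\geq1}\Gamma_k\ast H$ of Theorem \ref{thm3.3}; in particular $G_P\geq H>0$ off the diagonal.

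\textbf{Geometric comparison with the round sphere.} First I would prove the inequality $\Theta_4(g)\geq\Theta_4(S^n)$ in (1). Fix $p\in M$ and work in conformal normal coordinates at $p$; by (\ref{eq3.6}) and the standard expansion of $G_{L,p}$, the function $H_p=H(p,\cdot)$ has leading singularity $\frac{1}{2n(n-2)(n-4)\omega_n}\,r^{4-n}$, exactly that of the Paneitz Green's function of the round $S^n$ at its pole. Transplanting a concentrating family $f_\varepsilon\geq0$ supported near $p$ modelled on the extremal functions of the sharp HLS inequality on $S^n$, and using $G_P\geq H$ together with $f_\varepsilon\geq0$ to get $\int_M T_{G_P}f_\varepsilon\cdot f_\varepsilon\,d\mu\geq\int_M T_H f_\varepsilon\cdot f_\varepsilon\,d\mu$, one checks that the right-hand quotient tends to $\Theta_4(S^n)$ as $\varepsilon\to0$. \emph{The strict version of this inequality, needed for both the rigidity in (1) and the existence in (2) and (3), is where the real work lies and is the main obstacle.} Keeping the next-order term in the test-function computation, the gain over $\Theta_4(S^n)$ is controlled by the strictly positive correction $G_P-H=\sum_{k\geq1}\Gamma_k\ast H$: when $n\in\{5,6,7\}$ or $g$ is locally conformally flat near $p$ this is encoded by the nonnegative constant $A$ of Theorem \ref{thm3.5}, and the estimate takes the shape $\Theta_4(g)\geq\Theta_4(S^n)+c_nA\,\varepsilon^{\beta}$ up to lower order, with $c_n>0$ and $\beta>0$; for the remaining dimensions one argues directly from $G_P>H$. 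By Theorem \ref{thm3.3} the correction vanishes exactly when $(M,g)$ is conformally diffeomorphic to $S^n$, so $\Theta_4(g)>\Theta_4(S^n)$ whenever $(M,g)$ is not the round sphere. The delicate parts are building the test function sharply enough that the remainder is genuinely lower order and pinning down $\beta$; this is dimension sensitive and, in high dimensions without local conformal flatness, must go through the integral representation of the correction rather than a pointwise expansion of $G_{P,p}$.

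\textbf{Variational part: existence, sign, regularity.} Assuming $\Theta_4(g)>\Theta_4(S^n)$, I would run concentration-compactness on a maximizing sequence $f_j$ for (\ref{eq3.21}) normalized in $L^{2n/(n+4)}$: a nontrivial concentration would be carried at finitely many points and, by the local estimate above, could contribute at most $\Theta_4(S^n)$ to the quotient in the limit, contradicting strictness; hence $f_j$ converges strongly and a maximizer $f$ exists. (In the borderline case $\Theta_4(g)=\Theta_4(S^n)$, the geometric step forces $(M,g)$ conformal to $S^n$, where $\Theta_4$ is attained by the HLS extremals.) Since $G_P>0$, $|T_{G_P}f|\leq T_{G_P}|f|$ pointwise and $\||f|\|_{L^{2n/(n+4)}}=\|f\|_{L^{2n/(n+4)}}$, so $|f|$ is again a maximizer; the Euler--Lagrange equation $T_{G_P}f=\lambda f^{(n-4)/(n+4)}$ together with $G_P>0$ then shows $f$ vanishes nowhere, so after replacing $f$ by $|f|$ we may take $f>0$, and a scaling normalizes $\lambda=\frac{2}{n-4}$, i.e. $Q_{f^{4/(n+4)}g}=1$. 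Smoothness follows by a bootstrap: the HLS mapping properties of $T_{G_P}$ (inherited from those of the Green's function of $P$, a potential of order four) upgrade $f\in L^{2n/(n+4)}$ to $f\in L^\infty$, after which iterating the equation (\ref{eq3.17}) for $\rho=f^{(n-4)/(n+4)}>0$ together with Schauder estimates for $P$ gives $\rho,f\in C^\infty$. This establishes (2) and the inequality in (1).

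\textbf{Rigidity and compactness.} The equality clause in (1) is the contrapositive of the strictness proved in the geometric step. For (3), if $(M,g)$ is not conformal to $S^n$ then $\Theta_4(g)>\Theta_4(S^n)$, and a blow-up analysis of a sequence of normalized maximizers shows that no bubble can form, since a bubble would again carry precisely $\Theta_4(S^n)$; the sequence is therefore bounded in $L^\infty$, the bootstrap of the previous step yields uniform $C^\infty$ bounds, and Arzel\`a--Ascoli gives compactness of the maximizer set in the $C^\infty$ topology.
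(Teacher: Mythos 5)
Your overall route---the dual HLS-type maximization (\ref{eq3.21}) with kernel $G_{P}$, the sign of maximizers deduced from $G_{P}>0$, concentration--compactness against the sphere value, bootstrap regularity, and a blow-up argument for the compactness statement---is the same strategy this paper attributes to \cite{HY5}; the paper itself gives no proof of Theorem \ref{thm3.9}, only the reduction (\ref{eq3.20})--(\ref{eq3.22}), the remark that positivity of $G_{P}$ forces maximizers not to change sign, and the statement that Theorems \ref{thm3.1}, \ref{thm3.3} and \ref{thm3.5} are the ingredients. The variational half of your plan (existence given strictness, sign via the equality case of $\int_{M}\int_{M}f\,G_{P}\,f\leq\int_{M}\int_{M}\left\vert f\right\vert G_{P}\left\vert f\right\vert $, regularity by the usual regularity-lifting splitting of $G_{P}$ into a small singular piece plus a bounded piece, compactness by blow-up) is sound at the level of a sketch.

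The genuine gap is in the step you yourself flag: the strict inequality $\Theta _{4}\left( g\right) >\Theta _{4}\left( S^{n}\right) $ when $\left( M,g\right) $ is not conformal to the sphere, on which the equality clause of (1), the borderline case of (2), and all of (3) rest. Your quantitative claim $\Theta _{4}\left( g\right) \geq \Theta _{4}\left( S^{n}\right) +c_{n}A\,\varepsilon ^{\beta }$ up to lower-order terms, with the gain coming from $\left( \int_{M}f_{\varepsilon }d\mu \right) ^{2}\sim \varepsilon ^{n-4}$, is justified only in the regime of Theorem \ref{thm3.5}, i.e. $n=5,6,7$ or locally conformally flat near $p$: there the constant $A$ is isolated and the residual local errors are genuinely of lower order. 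For $n\geq 8$ without local conformal flatness, the deviation of $H=c\,G_{L}^{\frac{n-4}{n-2}}$ from the flat biharmonic kernel enters at relative order $\overline{xy}^{2}$ (curvature/Weyl terms, at best pushed to relative order $\overline{xy}^{4}$ in conformal normal coordinates), so it contributes $O(\varepsilon ^{2})$ (at best $O(\varepsilon ^{4})$) to the bubble energy with a sign not at your disposal, and this dominates the $\varepsilon ^{n-4}$ gain; hence ``arguing directly from $G_{P}>H$'' by inserting a bubble does not close the argument. This is precisely where \cite{HY5} must go beyond Theorem \ref{thm3.5}: the ingredient valid in all dimensions is the series (\ref{eq3.13}), which gives a positive lower bound on $G_{P}-H=\sum_{k\geq 1}\Gamma _{k}\ast H$ with no positive-mass input, but converting that positivity into strictness/rigidity requires a finer analysis of (almost) maximizing or blow-up sequences rather than a bare test-function insertion. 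As written, your plan proves the theorem only under the additional hypothesis $n\in \left\{ 5,6,7\right\} $ or local conformal flatness near some point.
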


The approach in Theorem \ref{thm3.9} is motivated from the integral
equations considered in \cite{HWY1, HWY2}. Integral equation formulation of
the $Q$ curvature equation had been used in \cite{QR2}.

At last we note that compactness problem for constant $Q$ curvature metrics
in a fixed conformal class has been considered in \cite{HeR1, L, LX, QR1, WZ}%
.

\section{Dimension $3$\label{sec4}}

As we will see soon, the analysis of $Q$ curvature equation in dimension $3$
is very different from those in dimension greater than $4$. On the other
hand, we expect the scalar curvature and $Q$ curvature plays more dominant
role for the geometry of the conformal class and the topology of the
underlying manifold in dimension $3$ than in dimension greater than $4$.
Because of this, we will list problems in dimension $3$ explicitly even
though some of them are similar to those in Section \ref{sec3}.

In dimension $3$, the $Q$ curvature is given by%
\begin{eqnarray}
Q &=&-\frac{1}{4}\Delta R-2\left\vert Rc\right\vert ^{2}+\frac{23}{32}R^{2}
\label{eq4.1} \\
&=&-\Delta J-2\left\vert A\right\vert ^{2}+\frac{3}{2}J^{2}  \notag \\
&=&-\Delta J+4\sigma _{2}\left( A\right) -\frac{1}{2}J^{2},  \notag
\end{eqnarray}%
here%
\begin{equation}
J=\frac{R}{4},\quad A=Rc-Jg.  \label{eq4.2}
\end{equation}%
The Paneitz operator is given by%
\begin{eqnarray}
P\varphi &=&\Delta ^{2}\varphi +4\func{div}\left[ Rc\left( \nabla \varphi
,e_{i}\right) e_{i}\right] -\frac{5}{4}\func{div}\left( R\nabla \varphi
\right) -\frac{1}{2}Q\varphi  \label{eq4.3} \\
&=&\Delta ^{2}\varphi +4\func{div}\left( A\left( \nabla \varphi
,e_{i}\right) e_{i}\right) -\func{div}\left( J\nabla \varphi \right) -\frac{1%
}{2}Q\varphi .  \notag
\end{eqnarray}%
Here $e_{1},e_{2},e_{3}$ is a local orthonormal frame with respect to $g$.
For any smooth positive function $\rho $,%
\begin{equation}
P_{\rho ^{-4}g}\varphi =\rho ^{7}P_{g}\left( \rho \varphi \right) .
\label{eq4.4}
\end{equation}%
Hence%
\begin{equation}
Q_{\rho ^{-4}g}=-2\rho ^{7}P_{g}\left( \rho \right) .  \label{eq4.5}
\end{equation}

\begin{problem}
\label{prob4.1}Let $\left( M,g\right) $ be a $3$ dimensional smooth compact
Riemannian manifold, can we always find a conformal metric $\widetilde{g}$
such that $\widetilde{Q}$ is either strictly positive, or identically zero,
or strictly negative? Can we find a conformal invariant condition which is
equivalent to the existence of a conformal metric with positive $Q$
curvature? Same questions can be asked when "positive" is replaced by
"negative" or "zero".
\end{problem}

Unfortunately this simple looking question only has partial solution at this
stage.

\begin{theorem}[\protect\cite{HY4}]
\label{thm4.1}Let $\left( M,g\right) $ be a smooth compact $3$ dimensional
Riemannian manifold with $Y\left( g\right) >0$, then the following
statements are equivalent:

\begin{enumerate}
\item $\exists \widetilde{g}\in \left[ g\right] $ with $\widetilde{Q}>0$.

\item $\ker P_{g}=0$ and the Green's function $G_{P}\left( p,q\right) <0$
for any $p,q\in M,p\neq q$.

\item $\ker P_{g}=0$ and there exists a $p\in M$ such that $G_{P}\left(
p,q\right) <0$ for $q\in M\backslash \left\{ p\right\} $.
\end{enumerate}
\end{theorem}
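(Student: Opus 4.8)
The plan is to run the scheme of \secref{sec3} (Theorems~\ref{thm3.1}--\ref{thm3.3}) in dimension $3$, built on the dimension $3$ counterpart of the identity (\ref{eq3.2}) established in \cite{HY4}. Writing $\widehat{g}=G_{L,p}^{4}g$ for the scalar-flat metric attached to the conformal Laplacian Green's function (available since $Y(g)>0$), one has $\widehat{J}\equiv 0$, hence $\widehat{Q}=-2|Rc_{\widehat{g}}|_{\widehat{g}}^{2}$, and feeding this into (\ref{eq4.5}) with $\rho=G_{L,p}^{-1}$ gives, on $M\setminus\{p\}$,
\begin{equation*}
P_{g}(G_{L,p}^{-1})=G_{L,p}^{-1}|Rc_{G_{L,p}^{4}g}|_{g}^{2}\geq 0,
\end{equation*}
while near $p$ the leading behaviour $G_{L,p}^{-1}\sim c\,r$ contributes $\Delta^{2}(c\,r)$, a negative multiple of $\delta_{p}$. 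After normalisation this yields, in the distribution sense,
\begin{equation*}
P_{q}H(p,q)=-\delta_{p}(q)+\Gamma_{1}(p,q),
\end{equation*}
where $H(p,q)=c_{0}G_{L,p}(q)^{-1}>0$ is bounded and continuous (vanishing on the diagonal) and $\Gamma_{1}(p,q)=H(p,q)|Rc_{G_{L,p}^{4}g}|_{g}^{2}(q)\geq 0$ is $L^{1}$ in $q$ uniformly in $p$ (it is $O(\overline{pq}^{-1})$ near the diagonal). The sign change from dimension $\geq 5$ --- a negative $\delta_{p}$ and a nonnegative $\Gamma_{1}$, coming from $(n-4)/(n-2)=-1$ and $c_{n}<0$ at $n=3$ --- is exactly what will force $G_{P}<0$. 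As in \secref{sec3}, $\ker P$, the sign of $G_{P}$ (via (\ref{eq4.4}), which gives $G_{P,\rho^{-4}g}(p,q)=\rho(p)^{-1}\rho(q)^{-1}G_{P,g}(p,q)$) and the spectral radius $r_{\sigma}(T_{\Gamma_{1}})$ are all conformal invariants.

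For the implication (1)$\Rightarrow$(2), I would first replace $g$ by a conformal metric with $\widetilde{Q}>0$, so that by the invariances above it is enough to prove (2) for a $g$ with $Q>0$. Pairing the identity with a smooth $\varphi$ gives $\varphi=-T_{H}(P\varphi)+T_{\Gamma_{1}}\varphi$; taking $\varphi\equiv 1$ and using $P1=-\frac{1}{2}Q$ produces
\begin{equation*}
\int_{M}\Gamma_{1}(p,q)\,d\mu(q)=1-\frac{1}{2}\int_{M}H(p,q)Q(q)\,d\mu(q)<1
\end{equation*}
for every $p$, hence $\leq\alpha<1$ uniformly by continuity and compactness of $M$. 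Thus $\|T_{\Gamma_{1}}\|_{\mathcal{L}(L^{\infty},L^{\infty})}\leq\alpha<1$, so $r_{\sigma}(T_{\Gamma_{1}})<1$; the Neumann series then inverts $I-T_{\Gamma_{1}}$ in $\varphi=-T_{H}(P\varphi)+T_{\Gamma_{1}}\varphi$, giving $\ker P=0$ and $G_{P}=-H-\sum_{k\geq 1}\Gamma_{k}\ast H\leq -H<0$ off the diagonal, the series converging uniformly as in the proof of Theorem~\ref{thm3.3}. Equality $G_{P}(p,q)=-H(p,q)$ somewhere forces $\Gamma_{1}(p,\cdot)\equiv 0$, i.e. $Rc_{G_{L,p}^{4}g}=0$, and by asymptotic flatness and relative volume comparison $(M,g)$ is conformal to $S^{3}$. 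The step (2)$\Rightarrow$(3) is immediate.

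The substance is (3)$\Rightarrow$(1). Assuming $\ker P=0$ and $G_{P}(p_{0},\cdot)\leq 0$ (negative off $p_{0}$), I would insert $\varphi=G_{P}(p_{0},\cdot)$, for which $P\varphi=\delta_{p_{0}}$, into the $k$-fold iterate $\varphi=-T_{H+\Gamma_{1}\ast H+\cdots+\Gamma_{k-1}\ast H}(P\varphi)+T_{\Gamma_{k}}\varphi$ of the identity; since each $\Gamma_{j}\ast H\geq 0$ and $\Gamma_{k}\geq 0$ while $G_{P}(p_{0},\cdot)\leq 0$, comparison gives
\begin{equation*}
0\leq (H+\Gamma_{1}\ast H+\cdots+\Gamma_{k-1}\ast H)(p_{0},q)\leq -G_{P}(p_{0},q)
\end{equation*}
for all $q$ and $k$. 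Integrating in $q$ and using $\inf_{s}\int_{M}H(s,q)\,d\mu(q)>0$ yields $\sum_{j\geq 0}\int_{M}\Gamma_{j}(p_{0},q)\,d\mu(q)<\infty$, so $\int_{M}\Gamma_{j}(p_{0},\cdot)\,d\mu\to 0$. One then upgrades this to $r_{\sigma}(T_{\Gamma_{1}})<1$: apply the Krein--Rutman theorem to the nonnegative operator $T_{\Gamma_{1}}$ on $C(M)$ (a power of which is compact), obtaining $r_{\sigma}(T_{\Gamma_{1}})=\lambda_{0}$; if $\lambda_{0}=0$ we are done, and if $\lambda_{0}>0$ with nonnegative eigenfunction $\psi_{0}\neq 0$ then $\lambda_{0}^{j}\psi_{0}(p_{0})=\int_{M}\Gamma_{j}(p_{0},s)\psi_{0}(s)\,d\mu(s)\to 0$, which forces $\lambda_{0}<1$ when $\psi_{0}(p_{0})>0$, while the alternative $\psi_{0}(p_{0})=0$ --- using that $\{\,s:\Gamma_{1}(p_{0},s)>0\}=\{\,|Rc_{G_{L,p_{0}}^{4}g}|_{g}\neq 0\}$ is open and propagating $\psi_{0}=0$ across the ``support'' relation $p\sim q\Leftrightarrow\Gamma_{1}(p,q)>0$ --- gives either $\psi_{0}\equiv 0$, a contradiction, or $\Gamma_{1}\equiv 0$, i.e. $(M,g)$ conformal to $S^{3}$ and $r_{\sigma}(T_{\Gamma_{1}})=0$. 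Once $r_{\sigma}(T_{\Gamma_{1}})<1$, the argument of the previous paragraph (which used $Q>0$ only to get $r_{\sigma}<1$) gives $G_{P}\leq -H<0$ everywhere, i.e. (2); then, choosing $\phi\in C^{\infty}(M)$ with $\phi>0$ and setting $\rho=-T_{G_{P}}(\phi)$, we get $\rho>0$ smooth with $P\rho=-\phi<0$, so by (\ref{eq4.5}) the metric $\widetilde{g}=\rho^{-4}g$ satisfies $\widetilde{Q}=-2\rho^{7}P\rho=2\rho^{7}\phi>0$, which is (1).

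The real obstacle I anticipate is the passage from $\int_{M}\Gamma_{j}(p_{0},\cdot)\,d\mu\to 0$ --- decay tested against one fixed pole --- to the uniform statement $r_{\sigma}(T_{\Gamma_{1}})<1$. Because $P$ is fourth order there is no maximum principle to transport the sign of $G_{P}(p_{0},\cdot)$ to $G_{P}(q,\cdot)$ for general $q$, so one is forced to exploit the nonnegativity and (power-)compactness of $T_{\Gamma_{1}}$ through a Krein--Rutman analysis and to settle separately the degenerate case in which the relevant positive kernel collapses, namely the round sphere.
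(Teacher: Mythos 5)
Your direction (1)$\Rightarrow$(2) is essentially the argument the notes themselves give for Theorem \ref{thm4.3} (the dimension $3$ analogue of Theorem \ref{thm3.3}): conformally normalize to $Q>0$, take $\varphi =1$ in the parametrix identity coming from (\ref{eq4.10}) to get $\int_{M}\Gamma _{1}\left( p,q\right) d\mu \left( q\right) \leq \alpha <1$, sum the Neumann series, and read off $\ker P=0$ and $G_{P}\leq H<0$ off the diagonal; your final step (2)$\Rightarrow$(1) via $\rho =-T_{G_{P}}\phi $ is also correct. The problem is the direction (3)$\Rightarrow$(1), which is precisely the part the notes leave entirely to \cite{HY4}, and your treatment of it does not close.

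Two concrete gaps. First, a bookkeeping error that matters because $\Gamma _{1}$ is \emph{not} symmetric: inserting $\varphi =G_{P,p_{0}}$ into the iterated identity gives (in your sign convention) $-G_{P,p_{0}}\left( q\right) \geq \sum_{j=0}^{k-1}\left( \Gamma _{j}\ast H\right) \left( q,p_{0}\right) $ with the pole sitting in the \emph{second} slot. Integrating in $q$ therefore controls $\int_{M}H\left( s,p_{0}\right) \left( \int_{M}\Gamma _{j}\left( q,s\right) d\mu \left( q\right) \right) d\mu \left( s\right) $, i.e. column integrals of $\Gamma _{j}$ weighted by $H\left( \cdot ,p_{0}\right) $ (or, taking $q=p_{0}$, row integrals weighted by $H\left( s,p_{0}\right) $, a weight vanishing at $s=p_{0}$); it does not give the unweighted decay $\int_{M}\Gamma _{j}\left( p_{0},s\right) d\mu \left( s\right) \rightarrow 0$ that you later test against $\psi _{0}$ at $p_{0}$. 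Second, and more seriously, the Krein--Rutman step is exactly what you flag as ``the real obstacle'' and it is not resolved: without irreducibility of $T_{\Gamma _{1}}$ (which can genuinely fail, since $\Gamma _{1}\left( p,\cdot \right) $ vanishes wherever $Rc_{G_{L,p}^{4}g}$ does, possibly on large sets), Krein--Rutman only produces \emph{some} nonnegative eigenfunction $\psi _{0}$, and in the case $\psi _{0}\left( p_{0}\right) =0$ your propagation along $\left\{ \Gamma _{1}>0\right\} $ need not reach all of $M$, so the dichotomy ``$\psi _{0}\equiv 0$ or $\Gamma _{1}\equiv 0$'' is unproven. Thus the passage from single-pole information to $r_{\sigma }\left( T_{\Gamma _{1}}\right) <1$ is a genuine gap. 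Note also that the identity already supplies the maximum-principle substitute one needs, with no spectral radius at all: applying (\ref{eq4.10}) to $\varphi =G_{P,p}$ gives $G_{P,p}\left( q\right) =H\left( q,p\right) +\int_{M}\Gamma _{1}\left( q,s\right) G_{P,p}\left( s\right) d\mu \left( s\right) $, so the weak inequality $G_{P,p}\leq 0$ self-improves to the strict one $G_{P,p}\left( q\right) \leq H\left( q,p\right) <0$ for $q\neq p$; this is the mechanism used in \cite{HY4} (and in the proof of Theorem \ref{thm3.3} here for the rigidity statement), and one then moves the pole from $p_{0}$ to every $p$ by a continuity/connectedness argument in $p$, rather than by estimating $r_{\sigma }\left( T_{\Gamma _{1}}\right) $ from data at one point.
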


By transformation law (\ref{eq4.4}) we know $\ker P_{g}=0$ is a conformal
invariant condition. Under this assumption, the Green's functions satisfy%
\begin{equation}
G_{P,\rho ^{-4}g}\left( p,q\right) =\rho \left( p\right) ^{-1}\rho \left(
q\right) ^{-1}G_{P,g}\left( p,q\right) .  \label{eq4.6}
\end{equation}%
Hence the fact $G_{P}\left( p,q\right) <0$ for $p\neq q$ is a conformal
invariant condition. Theorem \ref{thm4.1} is based on the following identity:

\begin{theorem}[\protect\cite{HY4}]
\label{thm4.2}Let $\left( M,g\right) $ be a $3$ dimensional smooth compact
Riemannian manifold with $Y\left( g\right) >0$, $p\in M$, then we have $%
G_{L,p}^{-1}\left\vert Rc_{G_{L,p}^{4}g}\right\vert _{g}^{2}\in L^{1}\left(
M\right) $ and%
\begin{equation}
P\left( G_{L,p}^{-1}\right) =-256\pi ^{2}\delta _{p}+G_{L,p}^{-1}\left\vert
Rc_{G_{L,p}^{4}g}\right\vert _{g}^{2}  \label{eq4.7}
\end{equation}%
in distribution sense.
\end{theorem}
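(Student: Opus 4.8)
The plan is to follow the template of the higher-dimensional identities \eqref{eq2.3} and \eqref{eq3.2}: first establish the identity on $M\setminus\{p\}$ directly from the conformal transformation laws, and then analyze the singularity at $p$ to obtain the $L^1$ statement and the precise coefficient of the Dirac mass. For the pointwise identity away from the pole, put $\rho=G_{L,p}^{-1}$, so that $\rho^{-4}g=G_{L,p}^{4}g=:\widehat g$; note this is exactly $G_{L,p}^{\frac{4}{n-2}}g$ in dimension $3$. Since $L_gG_{L,p}=0$ on $M\setminus\{p\}$, the scalar curvature law \eqref{eq1.3} gives $\widehat R=0$ there, hence $\widehat J=0$ and $\widehat A=\widehat{Rc}$; inserting this into \eqref{eq4.1} written for $\widehat g$ yields $\widehat Q=-2|\widehat{Rc}|_{\widehat g}^{2}$ on $M\setminus\{p\}$. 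On the other hand \eqref{eq4.5} applied with this $\rho$ reads $\widehat Q=-2\,G_{L,p}^{-7}P_g(G_{L,p}^{-1})$. Comparing the two, and using that a $(0,2)$-tensor rescales by $|\cdot|_{\widehat g}^{2}=G_{L,p}^{-8}|\cdot|_{g}^{2}$, one gets $P_g(G_{L,p}^{-1})=G_{L,p}^{-1}|Rc_{\widehat g}|_g^{2}$ on $M\setminus\{p\}$.

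Next I would study the behavior near $p$. Using the expansion of the Green's function of $L_g=-8\Delta+R$ in conformal normal coordinates at $p$ (see \cite{LP}), $G_{L,p}=\frac{1}{32\pi}\left(r^{-1}+A+O(r)\right)$, one gets $G_{L,p}^{-1}=32\pi r+O(r^{2})$ together with the matching bounds on derivatives up to third order, so that $G_{L,p}^{-1}\in W^{3,1}_{\mathrm{loc}}$ near $p$ but not $W^{4,1}$. Combining this with the conformal change formula for the Ricci tensor of $\widehat g=\phi^{4}g$, $\phi=G_{L,p}$, one checks that the leading $r^{-2}$ contributions cancel (a consequence of $\widehat R\equiv 0$), leaving $|Rc_{\widehat g}|_g^{2}=O(r^{-2})$, with the $O(r^{-1})$ part of $Rc_{\widehat g}$ proportional to the mass-type constant $A$. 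Hence $G_{L,p}^{-1}|Rc_{\widehat g}|_g^{2}=O(r^{-1})\in L^{1}(M)$. This is the dimension-$3$ counterpart of \eqref{eq3.9} and is carried out exactly as in \cite[Section 2]{HY4}.

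To upgrade the identity to a distributional one on all of $M$, I would test against $\varphi\in C^{\infty}(M)$ and use that $P_g$ is formally self-adjoint: $\langle P_g(G_{L,p}^{-1}),\varphi\rangle=\int_M G_{L,p}^{-1}P_g\varphi\,d\mu=\lim_{\varepsilon\to0}\int_{M\setminus B_\varepsilon(p)}G_{L,p}^{-1}P_g\varphi\,d\mu$. Integrating by parts on $M\setminus B_\varepsilon(p)$, the bulk term becomes $\int_{M\setminus B_\varepsilon(p)}\varphi\,P_g(G_{L,p}^{-1})\,d\mu$, which by the previous step equals $\int_{M\setminus B_\varepsilon(p)}\varphi\,G_{L,p}^{-1}|Rc_{\widehat g}|_g^{2}\,d\mu\to\int_M\varphi\,G_{L,p}^{-1}|Rc_{\widehat g}|_g^{2}\,d\mu$ since the integrand is in $L^{1}$. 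In the boundary integrals over $\partial B_\varepsilon(p)$, the divergence and zeroth-order terms of $P_g$, as well as the $O(r^{2})$ remainder of $G_{L,p}^{-1}$, contribute nothing in the limit (the relevant vector fields are bounded), while the biharmonic part gives the only surviving term; since $\Delta(32\pi r)=64\pi/r$ to leading order and, in normal coordinates, $\Delta(1/r)$ carries the Dirac mass $-4\pi\delta_p$, this boundary term converges to $-256\pi^{2}\varphi(p)$. Equivalently, $P_g(G_{L,p}^{-1})-G_{L,p}^{-1}|Rc_{\widehat g}|_g^{2}$ is a distribution supported at $\{p\}$; being of order $\le 1$ (as $G_{L,p}^{-1}\in W^{3,1}_{\mathrm{loc}}$) with radially symmetric leading singularity, it must be a multiple of $\delta_p$, and the multiple is $-256\pi^{2}$. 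This is \eqref{eq4.7}.

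The first step is essentially formal, so I expect the substance, and the main obstacle, to be in the analysis near $p$: establishing the Green's function expansion with enough derivative control to guarantee that neither the $O(r^{2})$ remainder of $G_{L,p}^{-1}$ nor the lower-order terms of $P_g$ produce an additional Dirac mass, and verifying the integrability $G_{L,p}^{-1}|Rc_{\widehat g}|_g^{2}\in L^{1}(M)$, which hinges on the cancellation of the leading $r^{-2}$ singularities in the conformal change of the Ricci tensor so that $|Rc_{\widehat g}|_g^{2}=O(r^{-2})$ and the product is the borderline-integrable $O(r^{-1})$.
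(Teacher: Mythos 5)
Your proposal is correct and follows essentially the same route as the paper's source for this identity (\cite{HY4}, to which the lecture notes defer): conformal covariance of $P$ and $Q$ applied to $\rho=G_{L,p}^{-1}$ gives $P\left(G_{L,p}^{-1}\right)=G_{L,p}^{-1}\left\vert Rc_{G_{L,p}^{4}g}\right\vert _{g}^{2}$ away from the pole since $G_{L,p}^{4}g$ is scalar flat there, and the expansion $G_{L,p}=\frac{1}{32\pi}\left(r^{-1}+A+O\left(r\right)\right)$ with the cancellation making $\left\vert Rc_{G_{L,p}^{4}g}\right\vert _{g}^{2}=O\left(r^{-2}\right)$ (the paper's (\ref{eq4.11})) yields both the $L^{1}$ statement and, via the cut-off/boundary-term computation with $\Delta^{2}\left(32\pi r\right)=-256\pi^{2}\delta_{p}$, the correct Dirac coefficient. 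The only caveat, which you yourself flag, is that the derivative control on the Green's function remainder and the Ricci cancellation are exactly the content of \cite[Section 2]{HY4} rather than something you verify independently.
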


If $Y\left( g\right) >0$, we write 
\begin{equation}
H\left( p,q\right) =-\frac{G_{L}\left( p,q\right) ^{-1}}{256\pi ^{2}},
\label{eq4.8}
\end{equation}%
and%
\begin{equation}
\Gamma _{1}\left( p,q\right) =\frac{G_{L}\left( p,q\right) ^{-1}}{256\pi ^{2}%
}\left\vert Rc_{G_{L,p}^{4}g}\right\vert _{g}^{2}\left( q\right) .
\label{eq4.9}
\end{equation}%
Then (\ref{eq4.7}) becomes%
\begin{equation}
P_{q}H\left( p,q\right) =\delta _{p}\left( q\right) -\Gamma _{1}\left(
p,q\right) .  \label{eq4.10}
\end{equation}%
Note that by the calculation in \cite[Section 2]{HY4},%
\begin{equation}
\Gamma _{1}\left( p,q\right) =O\left( \overline{pq}^{-1}\right) ,
\label{eq4.11}
\end{equation}%
here $\overline{pq}$ denotes the distance between $p$ and $q$.

If we let $\widetilde{g}=\rho ^{-4}g$, here $\rho $ is a positive smooth
function, then for any smooth function $\varphi $ on $M$,%
\begin{equation}
T_{\widetilde{\Gamma }_{1}}\left( \varphi \right) =\rho ^{-1}T_{\Gamma
_{1}}\left( \rho \varphi \right) .  \label{eq4.12}
\end{equation}
Hence $T_{\widetilde{\Gamma }_{1}}$ and $T_{\Gamma _{1}}$ have the same
spectrum and spectral radius.

\begin{theorem}
\label{thm4.3}Let $\left( M,g\right) $ be a $3$ dimensional smooth compact
Riemannian manifold with $Y\left( g\right) >0$, then%
\begin{equation*}
\exists \widetilde{g}\in \left[ g\right] \text{ with }\widetilde{Q}%
>0.\Longleftrightarrow \text{the spectral radius }r_{\sigma }\left(
T_{\Gamma _{1}}\right) <1.
\end{equation*}%
Moreover if $r_{\sigma }\left( T_{\Gamma _{1}}\right) <1$, then $\ker P=0$
and%
\begin{equation}
G_{P}=H+\sum_{k=1}^{\infty }\Gamma _{k}\ast H,  \label{eq4.13}
\end{equation}%
here%
\begin{equation}
\Gamma _{k}=\Gamma _{1}\ast \cdots \ast \Gamma _{1}\text{ (}k\text{ times),}
\label{eq4.14}
\end{equation}%
$H$ and $\Gamma _{1}$ are given in (\ref{eq4.8}) and (\ref{eq4.9}). The
convergence in (\ref{eq4.13}) is uniform. In particular, $G_{P}\leq H$,
moreover if $G_{P}\left( p,q\right) =H\left( p,q\right) $ for some $p,q$,
then $\left( M,g\right) $ is conformal equivalent to the standard $S^{3}$.
\end{theorem}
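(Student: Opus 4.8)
The plan is to run the argument from the proof of Theorem \ref{thm3.3} in the dimension-$3$ setting, keeping careful track of the sign changes caused by the fact that here the relevant kernel $H=-G_{L}^{-1}/\left( 256\pi ^{2}\right) $ is \emph{negative}, while $\Gamma _{1}\geq 0$. The backbone is again a Neumann series expansion of the integral operator $T_{\Gamma _{1}}$ based on the identity (\ref{eq4.10}), and the equivalence splits into: (i) if some $\widetilde{g}\in \left[ g\right] $ has $\widetilde{Q}>0$, then $r_{\sigma }\left( T_{\Gamma _{1}}\right) <1$; and (ii) if $r_{\sigma }\left( T_{\Gamma _{1}}\right) <1$, then $\ker P=0$, the series (\ref{eq4.13}) converges uniformly to $G_{P}$, and $G_{P}\leq H<0$, whence Theorem \ref{thm4.1} yields a conformal metric with positive $Q$ curvature.

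For (i), since by (\ref{eq4.12}) the spectrum of $T_{\Gamma _{1}}$ is a conformal invariant, we may replace $g$ by $\widetilde{g}$ and assume $Q>0$ for the background metric. Pairing $P_{q}H\left( p,q\right) =\delta _{p}\left( q\right) -\Gamma _{1}\left( p,q\right) $ with a smooth function and using that $P$ is formally self-adjoint produces, exactly as for (\ref{eq3.15}), the identity $\varphi =T_{H}\left( P\varphi \right) +T_{\Gamma _{1}}\left( \varphi \right) $ for all $\varphi \in C^{\infty }\left( M\right) $. Taking $\varphi \equiv 1$ and inserting $P_{g}1=-\frac{1}{2}Q$ (from (\ref{eq4.3})) gives $T_{\Gamma _{1}}1=1+\frac{1}{2}T_{H}Q$. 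Because $H<0$ and $Q>0$, the continuous function $T_{H}Q$ is strictly negative on the compact manifold $M$, so $\alpha :=\sup _{M}T_{\Gamma _{1}}1<1$; since $\Gamma _{1}\geq 0$ this supremum equals $\left\Vert T_{\Gamma _{1}}\right\Vert _{\mathcal{L}\left( L^{\infty },L^{\infty }\right) }$, and hence $r_{\sigma }\left( T_{\Gamma _{1}}\right) \leq \alpha <1$.

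For (ii), suppose $r_{\sigma }\left( T_{\Gamma _{1}}\right) <\alpha <1$. Since convolution of kernels corresponds to composition of operators, $T_{\Gamma _{k}}=\left( T_{\Gamma _{1}}\right) ^{k}$, so the spectral radius formula gives $\sup _{p}\int_{M}\Gamma _{k}\left( p,q\right) d\mu \left( q\right) =\left\Vert T_{\Gamma _{k}}\right\Vert _{\mathcal{L}\left( L^{\infty },L^{\infty }\right) }<\alpha ^{k}$ for $k$ large. Now fix $m$ large enough that $\Gamma _{m}\in L^{\infty }\left( M\times M\right) $; this already holds for $m=2$, because by (\ref{eq4.11}) the kernel $\Gamma _{1}=O\left( \overline{pq}^{-1}\right) $, and convolving two kernels with such a singularity on a compact $3$-manifold produces a bounded kernel. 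Then $\left\Vert \Gamma _{k}\right\Vert _{L^{\infty }}\leq \alpha ^{k-m}\left\Vert \Gamma _{m}\right\Vert _{L^{\infty }}$ for $k$ large, so $\left\Vert \Gamma _{k}\right\Vert _{L^{\infty }}\rightarrow 0$ and likewise $\left\Vert \Gamma _{k}\ast H\right\Vert _{L^{\infty }}\rightarrow 0$ geometrically. Iterating $\varphi =T_{H}\left( P\varphi \right) +T_{\Gamma _{1}}\left( \varphi \right) $ gives $\varphi =T_{H+\Gamma _{1}\ast H+\cdots +\Gamma _{k-1}\ast H}\left( P\varphi \right) +T_{\Gamma _{k}}\left( \varphi \right) $, and letting $k\rightarrow \infty $ yields $\varphi =T_{H+\sum _{k\geq 1}\Gamma _{k}\ast H}\left( P\varphi \right) $, the series converging uniformly on $M\times M$. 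Hence $P\varphi =0\Rightarrow \varphi =0$, i.e. $\ker P=0$, and $G_{P}=H+\sum _{k\geq 1}\Gamma _{k}\ast H$. Since $\Gamma _{k}\geq 0$ and $H\leq 0$, each $\Gamma _{k}\ast H\leq 0$, so $G_{P}\leq H<0$; in particular $G_{P}<0$, and Theorem \ref{thm4.1} then supplies $\widetilde{g}\in \left[ g\right] $ with $\widetilde{Q}>0$, closing the equivalence.

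For the rigidity clause, if $G_{P}\left( p,q\right) =H\left( p,q\right) $ for some $p\neq q$, then $\sum _{k\geq 1}\Gamma _{k}\ast H\left( p,q\right) =0$, and since every summand is nonpositive they all vanish; in particular $\int_{M}\Gamma _{1}\left( p,s\right) H\left( s,q\right) d\mu \left( s\right) =0$, and with $H<0$ and $\Gamma _{1}\geq 0$ continuous this forces $\Gamma _{1}\left( p,\cdot \right) \equiv 0$, i.e. $Rc_{G_{L,p}^{4}g}\equiv 0$ on $M\backslash \left\{ p\right\} $. As $\left( M\backslash \left\{ p\right\} ,G_{L,p}^{4}g\right) $ is asymptotically flat, the relative volume comparison theorem forces it to be isometric to $\mathbb{R}^{3}$, so $\left( M,g\right) $ is conformally equivalent to the standard $S^{3}$. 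The main obstacle I anticipate is not conceptual but bookkeeping: one must check that all the sign factors invert consistently relative to the $n\geq 5$ case (the point being that $c_{3}<0$, equivalently $H<0$, is exactly what turns ``$Q>0$'' into ``$\int_{M}\Gamma _{1}\,d\mu <1$''), and one must re-derive the soft mapping properties borrowed from the proof of Theorem \ref{thm3.3} --- that $T_{\Gamma _{1}}$ is bounded on $L^{\infty }$ with norm $\sup _{p}\int_{M}\Gamma _{1}\left( p,\cdot \right) d\mu $, that $p\mapsto \int_{M}\Gamma _{1}\left( p,\cdot \right) d\mu $ is continuous, and that $\Gamma _{2}$ is bounded --- all of which follow from $\Gamma _{1}\in L^{1}$ (Theorem \ref{thm4.2}) and the estimate (\ref{eq4.11}).
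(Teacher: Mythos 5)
Your proposal is correct and follows essentially the same route as the paper, which itself proves Theorem \ref{thm4.3} by rerunning the proof of Theorem \ref{thm3.3} with the sign of $H$ reversed: the identity $\varphi =T_{H}\left( P\varphi \right) +T_{\Gamma _{1}}\left( \varphi \right) $ with $\varphi =1$ giving $\int_{M}\Gamma _{1}\left( p,q\right) d\mu \left( q\right) =1+\frac{1}{2}\int_{M}H\left( p,q\right) Q\left( q\right) d\mu \left( q\right) <1$, the Neumann series with $\Gamma _{2}$ bounded (the paper likewise uses $k\geq k_{0}+2$, exploiting $\Gamma _{1}=O\left( \overline{pq}^{-1}\right) $ in dimension $3$), and the appeal to Theorem \ref{thm4.1} and the relative volume comparison argument. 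The only nitpick is that the dimension-$3$ rigidity clause allows $p=q$ (the relevant case on the standard $S^{3}$, where $G_{P}$ and $H$ both vanish at the pole), whereas you stated it for $p\neq q$; your argument covers this case verbatim, since $H\left( s,q\right) <0$ for all $s\neq q$ still forces $\Gamma _{1}\left( p,\cdot \right) \equiv 0$ from $\left( \Gamma _{1}\ast H\right) \left( p,q\right) =0$.
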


\begin{proof}
The argument is basically same as the proof of Theorem \ref{thm3.3}. If
there exists a $\widetilde{g}\in \left[ g\right] $ with $\widetilde{Q}>0$,
by conformal invariance we can assume the background metric has positive $Q$
curvature. By (\ref{eq4.10}) for any smooth function $\varphi $,%
\begin{equation}
\varphi =T_{H}\left( P\varphi \right) +T_{\Gamma _{1}}\left( \varphi \right)
.  \label{eq4.15}
\end{equation}%
Taking $\varphi =1$ in (\ref{eq4.15}) we get%
\begin{equation}
\int_{M}\Gamma _{1}\left( p,q\right) d\mu \left( q\right) =1+\frac{1}{2}%
\int_{M}H\left( p,q\right) Q\left( q\right) d\mu \left( q\right) .
\label{eq4.16}
\end{equation}%
Hence for some $\alpha $%
\begin{equation*}
\int_{M}\Gamma _{1}\left( p,q\right) d\mu \left( q\right) \leq \alpha <1
\end{equation*}%
for all $p\in M$. It follows that%
\begin{equation*}
\left\Vert T_{\Gamma _{1}}\right\Vert _{\mathcal{L}\left( L^{\infty
},L^{\infty }\right) }\leq \alpha
\end{equation*}%
and%
\begin{equation*}
r_{\sigma }\left( T_{\Gamma _{1}}\right) \leq \alpha <1.
\end{equation*}

On the other hand, assume $r_{\sigma }\left( T_{\Gamma _{1}}\right) <\alpha
<1$, then we can find a constant $k_{0}$ such that for $k\geq k_{0}$,%
\begin{equation*}
\left\Vert T_{\Gamma _{k}}\right\Vert _{\mathcal{L}\left( L^{\infty
},L^{\infty }\right) }<\alpha ^{k}.
\end{equation*}%
It follows that%
\begin{equation*}
\int_{M}\Gamma _{k}\left( p,q\right) d\mu \left( q\right) <\alpha ^{k}.
\end{equation*}%
Using (\ref{eq4.11}) we see for all $k\geq k_{0}+2$,%
\begin{equation*}
\left\Vert \Gamma _{k}\right\Vert _{L^{\infty }}\leq \alpha ^{k-2}\left\Vert
\Gamma _{2}\right\Vert _{L^{\infty }}\leq c\alpha ^{k}.
\end{equation*}%
In particular $\left\Vert \Gamma _{k}\right\Vert _{L^{\infty }}\rightarrow 0$
and%
\begin{equation*}
\left\Vert \Gamma _{k}\ast H\right\Vert _{L^{\infty }}\leq c\left\Vert
\Gamma _{k}\right\Vert _{L^{\infty }}\leq c\alpha ^{k}.
\end{equation*}%
The remaining argument goes exactly the same as in the proof of Theorem \ref%
{thm3.3}.
\end{proof}

\begin{remark}
\label{rmk4.1}Indeed it follows from (\ref{eq4.16}) that as long as $Y\left(
g\right) >0$ and%
\begin{equation*}
\int_{M}H\left( p,q\right) Q\left( q\right) d\mu \left( q\right) <0
\end{equation*}%
for all $p\in M$, then $r_{\sigma }\left( T_{\Gamma _{1}}\right) <1$. In
particular this is the case when $Q\geq 0$ and not identically zero.
\end{remark}

It is worth pointing out that if $\ker P=0$, then because $\delta _{p}\in
H^{-2}\left( M\right) $, we see $G_{P,p}\in H^{2}\left( M\right) \subset C^{%
\frac{1}{2}}\left( M\right) $, in particular the Green's function has a
value at the pole, $G_{P,p}\left( p\right) $. This pole's value plays
exactly the same role as the mass for classical Yamabe problem. If $Y\left(
g\right) >0$, $r_{\sigma }\left( T_{\Gamma _{1}}\right) <1$ and $\left(
M,g\right) $ is not conformal diffeomorphic to the standard $S^{3}$, it
follows from Theorem \ref{thm4.3} that $G_{P}\left( p,q\right) <0$ for all $%
p,q\in M$. On the other hand, on the standard $S^{3}$, the Green's function
of Paneitz operator touches zero exactly at the pole and is negative away
from the pole.

\begin{problem}
\label{prob4.2}Let $\left( M,g\right) $ be a $3$ dimensional smooth compact
Riemannian manifold, can we find a metric $\widetilde{g}\in \left[ g\right] $
such that $\widetilde{Q}=const$?
\end{problem}

\begin{theorem}[\protect\cite{HY3, HY4}]
\label{thm4.4}Let $\left( M,g\right) $ be a $3$ dimensional smooth compact
Riemannian manifold with $Y\left( g\right) >0$ and $r_{\sigma }\left(
T_{\Gamma _{1}}\right) <1$, then there exists $\widetilde{g}\in \left[ g%
\right] $ such that $\widetilde{Q}=1$. Moreover as long as $\left(
M,g\right) $ is not conformal diffeomorphic to the standard $S^{3}$, the set 
$\left\{ \widetilde{g}\in \left[ g\right] :\widetilde{Q}=1\right\} $ is
compact in $C^{\infty }$ topology.
\end{theorem}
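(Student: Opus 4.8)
The plan is to mimic, with the appropriate sign reversal, the scheme behind Theorem~\ref{thm3.9} in dimension $\ge 5$. By (\ref{eq4.5}), a conformal metric $\widetilde g=\rho^{-4}g$ has $\widetilde Q=1$ exactly when $\rho\in C^{\infty}(M)$, $\rho>0$, and $P\rho=-\tfrac12\rho^{-7}$. Under the hypotheses $Y(g)>0$ and $r_{\sigma}(T_{\Gamma_{1}})<1$, Theorem~\ref{thm4.3} supplies $\ker P=0$, $G_{P}<0$, and the expansion $G_{P}=H+\sum_{k\ge1}\Gamma_{k}\ast H$. In dimension $3$ the Green's function $G_{L}(p,q)$ blows up only like $\overline{pq}^{-1}$, so $H(p,q)=-G_{L}(p,q)^{-1}/(256\pi^{2})$ extends continuously across the diagonal (where it vanishes), and therefore $G_{P}$ is a \emph{bounded continuous} kernel on $M\times M$ with $-G_{P}\ge -H=G_{L}^{-1}/(256\pi^{2})\ge 0$ and $-G_{P}>0$ off the diagonal. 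Moreover, when $(M,g)$ is not conformal to $S^{3}$ one has $\Gamma_{1}\not\equiv 0$, and then the pole value $-G_{P}(p,p)=-\sum_{k\ge1}(\Gamma_{k}\ast H)(p,p)>0$ for every $p\in M$; this plays the role of the Yamabe mass and, just as in (\ref{eq3.2}) and Theorem~\ref{thm3.5}, it is produced by the series (\ref{eq4.13}) itself rather than by a separate positive mass theorem. Inverting $P$ with $G_{P}$ turns $P\rho=-\tfrac12\rho^{-7}$ into the integral equation $T_{-G_{P}}f=2f^{-1/7}$, where $f=\rho^{-7}>0$.

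For existence (the $S^{3}$ case being classical, so assume $(M,g)\not\cong S^{3}$) I would study $T_{-G_{P}}f=2f^{-1/7}$ by a reverse Hardy--Littlewood--Sobolev type variational problem, in the spirit of \cite{HWY1,HWY2}. Put
\[
\theta_{3}(g)=\inf\Big\{\,{\textstyle\int_{M}\int_{M}}(-G_{P})(p,q)f(p)f(q)\,d\mu(p)\,d\mu(q)\ :\ f\ge0,\ {\textstyle\int_{M}}f^{6/7}\,d\mu=1\,\Big\},
\]
with $6/7=\tfrac{2n}{n+4}$ for $n=3$. A test-function computation shows that concentrating $f$ near a point makes this quadratic form blow up: the mass term $-G_{P}(p,p)>0$ contributes $\sim -G_{P}(p,p)\,|B_{r}|^{-1/3}$, while the constraint with exponent $6/7<1$ forces a large amplitude. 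Hence $\theta_{3}(g)>0$ and minimizing sequences are non-degenerate; with the compactness of $M$ and the usual handling of the constraint, $\theta_{3}(g)$ is attained at some $f_{0}\ge0$. The Euler--Lagrange equation is $T_{-G_{P}}f_{0}=c\,f_{0}^{-1/7}$, and pairing with $f_{0}$ and using the constraint gives $c=\theta_{3}(g)>0$. Since $-G_{P}>0$ off the diagonal and $f_{0}\not\equiv0$, $T_{-G_{P}}f_{0}>0$ everywhere, so $f_{0}>0$; bootstrapping the integral equation together with the elliptic regularity of $P$ (the singularity of $-G_{P}$ being integrable) gives $f_{0}\in C^{\infty}$. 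Rescaling $f_{0}$ by the constant $(2/c)^{7/8}$ yields a solution of $T_{-G_{P}}f=2f^{-1/7}$, so $\rho:=f^{-1/7}\in C^{\infty}(M)$ is positive and solves $P\rho=-\tfrac12\rho^{-7}$, i.e.\ $\widetilde g=\rho^{-4}g$ has $\widetilde Q=1$.

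For the compactness assertion, assume $(M,g)$ is not conformal to $S^{3}$ and let $\rho_{i}>0$ be smooth solutions of $P\rho_{i}=-\tfrac12\rho_{i}^{-7}$. It suffices to establish a uniform two-sided bound $0<c\le\rho_{i}\le C$, after which Schauder estimates for the fourth order operator $P$ give $C^{\infty}$ subconvergence to a positive solution. Suppose not; then $\max_{M}\rho_{i}\to\infty$ or $\min_{M}\rho_{i}\to0$. Blow up at the extremal point $p_{i}$ with the scaling $\rho_{i}(\exp_{p_{i}}(\lambda_{i}^{2}y))=\lambda_{i}u_{i}(y)$ dictated by balancing the fourth order part of $P$ against $\rho^{-7}$; the curvature terms in $P$ scale away, and one obtains in the limit an entire positive solution of $\Delta^{2}u=-\tfrac12u^{-7}$ on $\mathbb{R}^{3}$, i.e.\ a standard ``bubble'' corresponding to $S^{3}$. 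A Pohozaev-type identity for $P$ on a small geodesic ball about $p_{i}$, combined with the strict positivity of the pole value $-G_{P}(p,p)>0$, contradicts the existence of such a blow-up --- this is the precise dimension $3$ analogue of the positive-mass obstruction in Yamabe compactness theory; see \cite{HY3,HY4}. Consequently $\{\rho_{i}\}$ is precompact in $C^{\infty}$, and so is $\{\widetilde g\in[g]:\widetilde Q=1\}$.

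The hard part is the compactness half. The equation $P\rho=-\tfrac12\rho^{-7}$ is a \emph{singular} fourth order equation whose nonlinearity has a negative exponent, so a degenerating sequence of solutions can fail in two ways: the familiar concentration $\rho\to\infty$, and the ``quenching'' degeneration $\rho\to0$, which has no counterpart in the classical Yamabe problem. Classifying the blow-up profiles of the limit equation $\Delta^{2}u=-\tfrac12u^{-7}$ and converting the strict positivity of $G_{P}(p,p)$ into a quantitative Pohozaev obstruction for a fourth order operator is the delicate step. By contrast the existence half is comparatively soft: unlike critical-exponent problems, the reverse-HLS energy actively penalizes concentration, and the only genuine subtlety is the functional-analytic treatment of the non-locally-convex constraint ${\textstyle\int_{M}}f^{6/7}\,d\mu=1$ (ruling out loss of mass), for which the framework of \cite{HWY1,HWY2} is the natural tool.
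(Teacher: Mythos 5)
Your reduction to the integral equation and your use of Theorem~\ref{thm4.3} (that $\ker P=0$, that $G_{P}$ is a bounded continuous kernel which is strictly negative everywhere, including on the diagonal, once $(M,g)$ is not conformal to $S^{3}$) agree with the paper. But the way you then split the work is inverted, and the compactness half as you propose it is a genuine gap. You say it ``suffices to establish a uniform two-sided bound $0<c\le\rho_{i}\le C$'' and then propose to obtain it by blow-up analysis, classification of entire solutions of $\Delta^{2}u=-\tfrac12 u^{-7}$ on $\mathbb{R}^{3}$, and a Pohozaev/positive-mass obstruction driven by $-G_{P}(p,p)>0$. None of these steps is carried out: you do not prove (or correctly cite) a Liouville-type classification for the negative-exponent fourth order equation with the growth the blow-up limit actually has, you do not write the Pohozaev identity for $P$, and you do not explain quantitatively how the pole value enters it. More to the point, this machinery is unnecessary: the two-sided bound you need falls out of the integral equation itself. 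Writing $K=-G_{P}$ and using $0<c_{0}\le K\le C_{1}$ (continuity plus strict negativity of $G_{P}$ on the compact $M\times M$), any solution of (\ref{eq4.18}) satisfies $\tfrac{c_{0}}{2}\int_{M}\rho^{-7}d\mu\le\rho(p)\le\tfrac{C_{1}}{2}\int_{M}\rho^{-7}d\mu$ for every $p$, hence $\max_{M}\rho\le\tfrac{C_{1}}{c_{0}}\min_{M}\rho$, and combining this with $\min_{M}\rho\ge\tfrac{c_{0}}{2}\mu(M)(\max_{M}\rho)^{-7}$ gives uniform positive lower and upper bounds; bootstrap then gives $C^{\infty}$ compactness. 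This elementary estimate, applied along the homotopy (\ref{eq4.19}) with kernels $(1-t)+tK$ (all bounded between positive constants), together with a Leray--Schauder degree argument starting from the explicitly solvable case $t=0$, is exactly how the paper proves \emph{both} existence and compactness at once; this is why the paper calls the proof technically simpler than that of Theorem~\ref{thm3.8}. Your assessment that compactness is the hard half requiring bubble analysis is the opposite of what actually happens.

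Your existence half is a genuinely different route (a ``reverse Hardy--Littlewood--Sobolev'' minimization of $\int_{M}\int_{M}(-G_{P})f\,f$ under $\int_{M}f^{6/7}d\mu=1$, rather than degree theory), and its skeleton is plausible: the lower bound $-G_{P}\ge c_{0}>0$ does make the infimum positive and minimizing sequences bounded in $L^{1}$. But the attainment step is not closed. Because the constraint exponent $6/7<1$ is not weakly continuous, a minimizing sequence can shed a concentrating piece that carries positive $L^{1}$ mass (hence positive quadratic-form cost) but zero $6/7$-mass in the limit, so the weak limit may only satisfy $\int_{M}f^{6/7}d\mu=\theta<1$; rescaling multiplies the energy by $\theta^{-7/3}>1$, and you have not shown the bookkeeping comes out in your favor. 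You flag this yourself and defer to \cite{HWY1, HWY2}, but those are maximization problems with different compactness mechanisms, so the deferral is not a proof. If you want a variational existence proof you must carry out this concentration--compactness argument; otherwise the paper's route --- a priori estimates for (\ref{eq4.19}) plus degree theory, as in \cite{HY3} --- gives existence and the stated compactness of $\{\widetilde g\in[g]:\widetilde Q=1\}$ with far less effort.
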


Indeed let $\widetilde{g}=u^{-4}g$, then $\widetilde{Q}=1$ becomes%
\begin{equation}
Pu=-\frac{1}{2}u^{-7},\quad u\in C^{\infty }\left( M\right) ,u>0.
\label{eq4.17}
\end{equation}%
We can assume $\left( M,g\right) $ is not conformal diffeomorphic to the
standard $S^{3}$, then it follows from Theorem \ref{thm4.3} that $\ker P=0$
and $G_{P}\left( p,q\right) <0$ for all $p,q\in M$. Let $K\left( p,q\right)
=-G_{P}\left( p,q\right) >0$, then (\ref{eq4.17}) becomes%
\begin{equation}
u=\frac{1}{2}T_{K}\left( u^{-7}\right) .  \label{eq4.18}
\end{equation}%
For $0\leq t\leq 1$, we consider a family of integral equations%
\begin{equation}
u=\frac{1}{2}T_{\left( 1-t\right) +tK}\left( u^{-7}\right) .  \label{eq4.19}
\end{equation}%
Elementary apriori estimate for (\ref{eq4.19}) based on the fact $K$ is
bounded and strictly positive together with a degree theory argument gives
us Theorem \ref{thm4.4} (see \cite{HY3}). Note the proof of Theorem \ref%
{thm4.4} is technically simpler than the proof of Theorem \ref{thm3.8}. This
gives a partial solution to Problem \ref{prob4.2}.

To find more solutions to Problem \ref{prob4.2}, we turn our attention to
variational methods. If we write $\widetilde{g}=\rho ^{-4}g$, then the
problem becomes%
\begin{equation}
P\rho =const\cdot \rho ^{-7},\quad \rho \in C^{\infty }\left( M\right) ,\rho
>0.  \label{eq4.20}
\end{equation}%
For $u\in C^{\infty }\left( M\right) $, we denote%
\begin{eqnarray}
E\left( u,v\right) &=&\int_{M}Pu\cdot ud\mu  \label{eq4.21} \\
&=&\int_{M}\left[ \left( \Delta u\right) ^{2}-4Rc\left( \nabla u,\nabla
u\right) +\frac{5}{4}R\left\vert \nabla u\right\vert ^{2}-\frac{1}{2}Qu^{2}%
\right] d\mu  \notag \\
&=&\int_{M}\left[ \left( \Delta u\right) ^{2}-4A\left( \nabla u,\nabla
u\right) +J\left\vert \nabla u\right\vert ^{2}-\frac{1}{2}Qu^{2}\right] d\mu
.  \notag
\end{eqnarray}%
It is clear that $E\left( u\right) $ extends continuously to $u\in
H^{2}\left( M\right) $. Sobolev embedding theorem tells us $H^{2}\left(
M\right) \subset C^{\frac{1}{2}}\left( M\right) $, hence we can set%
\begin{equation}
Y_{4}\left( g\right) =\inf_{u\in H^{2}\left( M\right) ,u>0}E\left( u\right)
\left\Vert u^{-1}\right\Vert _{L^{6}}^{2}=-\frac{1}{2}\sup_{\widetilde{g}\in %
\left[ g\right] }\widetilde{\mu }\left( M\right) ^{\frac{1}{3}}\int_{M}%
\widetilde{Q}d\widetilde{\mu }.  \label{eq4.22}
\end{equation}%
$Y_{4}\left( g\right) $ is a conformal invariant similar to $Y\left(
g\right) $. But unlike $Y\left( g\right) $, it is not clear anymore whether $%
Y_{4}\left( g\right) $ is finite or not.

\begin{problem}
\label{prob4.3}Let $\left( M,g\right) $ be a $3$ dimensional smooth compact
Riemannian manifold, do we have $Y_{4}\left( g\right) >-\infty $? Is $%
Y_{4}\left( g\right) $ always achieved?
\end{problem}

To better understand the problem, following \cite{HY1}, we start with some
basic analysis. Let $u_{i}$ be a minimizing sequence for (\ref{eq4.22}). By
scaling we can assume $\left\Vert u_{i}\right\Vert _{L^{2}}=1$. By Holder
inequality we have%
\begin{equation*}
c=\left\Vert 1\right\Vert _{L^{\frac{3}{2}}}\leq \left\Vert u_{i}\right\Vert
_{L^{2}}\left\Vert u_{i}^{-1}\right\Vert _{L^{6}},
\end{equation*}%
hence%
\begin{equation*}
\left\Vert u_{i}^{-1}\right\Vert _{L^{6}}\geq c>0.
\end{equation*}%
It follows that $E\left( u_{i}\right) \leq c$ and hence $\left\Vert
u_{i}\right\Vert _{H^{2}}\leq c$. After passing to a subsequence we can find 
$u\in H^{2}\left( M\right) $ such that $u_{i}\rightharpoonup u$ weakly in $%
H^{2}\left( M\right) $. It follows that $\left\Vert u\right\Vert _{L^{2}}=1$
and $u\geq 0$.

If $u>0$, then by lower semicontinuity we know $u$ is a minimizer. On the
other hand if $u$ touches zero somewhere, then%
\begin{equation*}
\infty =\left\Vert u^{-1}\right\Vert _{L^{6}}\leq \lim \inf_{i\rightarrow
\infty }\left\Vert u_{i}^{-1}\right\Vert _{L^{6}},
\end{equation*}%
hence%
\begin{equation*}
E\left( u\right) \leq \lim \inf_{i\rightarrow \infty }E\left( u_{i}\right)
\leq 0.
\end{equation*}%
If we can rule out the second case, then $Y_{4}\left( g\right) $ is achieved.

\begin{definition}[\protect\cite{HY1}]
\label{def4.1}Let $\left( M,g\right) $ be a $3$ dimensional smooth compact
Riemannian manifold. If $u\in H^{2}\left( M\right) $ with $u\geq 0$ and $u=0$
somewhere would imply $E\left( u\right) \geq 0$, then we say the metric $g$
(or the associated Paneitz operator) satisfies condition NN$^{+}$. If $u\in
H^{2}\left( M\right) $ is a nonzero function with $u\geq 0$ and $u=0$
somewhere would imply $E\left( u\right) >0$, then we say the metric $g$
satisfies condition P$^{+}$.
\end{definition}

\begin{theorem}[\protect\cite{HY1}]
\label{thm4.5}Let $\left( M,g\right) $ be a $3$ dimensional smooth compact
Riemannian manifold. Then we have%
\begin{equation*}
Y_{4}\left( g\right) \text{ is finite}\Rightarrow g\text{ satisfies NN}^{+}
\end{equation*}%
and%
\begin{equation*}
g\text{ satisfies P}^{+}\Rightarrow Y_{4}\left( g\right) \text{ is achieved
and hence finite.}
\end{equation*}
\end{theorem}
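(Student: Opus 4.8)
The plan is to prove the two implications separately. The first, ``$Y_4(g)$ finite $\Rightarrow$ NN$^+$'', I would prove by contraposition: suppose there is $u_0\in H^2(M)$ with $u_0\ge 0$, $u_0(p_0)=0$ for some $p_0$, and $E(u_0)<0$. Since $H^2(M)\hookrightarrow C^{1/2}(M)$ in dimension $3$, the bound $0\le u_0(x)\le [u_0]_{C^{1/2}}\overline{xp_0}^{1/2}$ near $p_0$ shows $u_0^{-6}$ fails to be integrable near $p_0$, so $\|u_0^{-1}\|_{L^6}=\infty$. Set $v_\varepsilon=u_0+\varepsilon>0$; using $P1=-\tfrac12 Q$ and the self-adjointness of $P$ one finds $E(v_\varepsilon)=E(u_0)-\varepsilon\int_M Qu_0\,d\mu-\tfrac{\varepsilon^2}{2}\int_M Q\,d\mu\to E(u_0)<0$, while $v_\varepsilon^{-1}\nearrow u_0^{-1}$ pointwise forces $\|v_\varepsilon^{-1}\|_{L^6}\to\infty$ by monotone convergence. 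Hence $E(v_\varepsilon)\|v_\varepsilon^{-1}\|_{L^6}^2\to-\infty$, so $Y_4(g)=-\infty$ and in particular $Y_4(g)$ is not finite.

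For ``P$^+$ $\Rightarrow$ $Y_4(g)$ achieved and finite'' I would run the direct method, as in the analysis preceding Definition~\ref{def4.1}. Since $u\equiv1$ is admissible, $Y_4(g)\le-\tfrac12\mu(M)^{1/3}\int_M Q\,d\mu<\infty$, so a minimizing sequence exists; rescale so that $\|u_i\|_{L^2}=1$. Hölder's inequality gives $\|u_i^{-1}\|_{L^6}\ge c>0$; since $E(u_i)\|u_i^{-1}\|_{L^6}^2$ is bounded above, $E(u_i)$ is bounded above, and together with the coercivity estimate $E(u)\ge\delta\|u\|_{H^2}^2-C$ for $\|u\|_{L^2}=1$ (obtained from $\int(\Delta u)^2$ after interpolating the first-order terms and absorbing $\int Qu^2$) this gives $\|u_i\|_{H^2}\le C$. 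Pass to a subsequence with $u_i\rightharpoonup u$ in $H^2$ and $u_i\to u$ in $C^0(M)$, so $u\ge0$ and $\|u\|_{L^2}=1$, hence $u\not\equiv0$. If $u>0$ everywhere then $u\ge\delta>0$, $u_i^{-1}\to u^{-1}$ uniformly, and weak lower semicontinuity of $E$ gives $E(u)\|u^{-1}\|_{L^6}^2\le\liminf_i E(u_i)\|u_i^{-1}\|_{L^6}^2=Y_4(g)$; since $u$ is admissible this must be an equality, so $u$ is a minimizer and $Y_4(g)$ is finite.

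The crux, and the step I expect to be the main obstacle, is to rule out the possibility that $u(p)=0$ for some $p$, and this is precisely where condition P$^+$ is used. In that case $\|u^{-1}\|_{L^6}=\infty$ as above, and since $u_i\to u$ uniformly, comparing $u_i^{-1}$ with $u^{-1}$ on the sets $\{u>\eta\}$ and letting $\eta\to0$ shows $\|u_i^{-1}\|_{L^6}\to\infty$. If $\liminf_i E(u_i)>0$, then $E(u_i)\|u_i^{-1}\|_{L^6}^2\to+\infty$, contradicting its convergence to $Y_4(g)<\infty$; hence $\liminf_i E(u_i)\le0$, and weak lower semicontinuity of $E$ yields $E(u)\le0$. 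But $u\not\equiv0$, $u\ge0$, and $u=0$ at $p$, so condition P$^+$ forces $E(u)>0$, a contradiction. Therefore $u>0$, and by the previous step $Y_4(g)$ is achieved and finite. The remaining technical ingredients (the two elementary integrability facts for $\|u^{-1}\|_{L^6}$; the weak lower semicontinuity of $E$, where only the $\int(\Delta u)^2$ term needs semicontinuity since the lower-order terms converge by the compactness of $H^2\hookrightarrow H^1$; and the coercivity bound) are routine, so the real content is the way condition P$^+$ closes the dichotomy.
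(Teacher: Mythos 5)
Your proposal is correct and takes essentially the same route as the paper: your proof of the second implication is precisely the direct-method analysis sketched before Definition~\ref{def4.1} (an $L^2$-normalized minimizing sequence, coercivity from interpolating the lower-order terms, weak $H^2$ convergence plus the $C^{1/2}$ embedding, and condition P$^{+}$ closing the dichotomy when the weak limit touches zero), with the technical details you add being the expected ones. For the first implication, which the paper defers to \cite{HY1}, your contrapositive perturbation $v_{\varepsilon}=u_{0}+\varepsilon$ with $E\left( v_{\varepsilon}\right) =E\left( u_{0}\right) -\varepsilon \int_{M}Qu_{0}d\mu -\frac{\varepsilon ^{2}}{2}\int_{M}Qd\mu$ and the H\"{o}lder-continuity argument giving $\left\Vert u_{0}^{-1}\right\Vert _{L^{6}}=\infty$ is the natural argument and is correct.
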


Note condition P$^{+}$ is clearly satisfied when $P>0$. In this case,
Theorem \ref{thm4.5} was proved in \cite{XY2}. Here is an example when we
have positivity of the Paneitz operator.

\begin{lemma}[\protect\cite{HY1}]
\label{lem4.1}If $Y\left( g\right) >0,\sigma _{2}\left( A\right) >0,$ $Q\leq
0$ and not identically zero, then $P>0$.
\end{lemma}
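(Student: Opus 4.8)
The plan is to prove the equivalent assertion that the quadratic form $E(u)=\int_{M}Pu\cdot u\,d\mu$ of (\ref{eq4.21}) is strictly positive on every nonzero $u\in H^{2}(M)$. First I would unpack the pointwise geometry forced by the hypotheses. Since $\sigma_{2}(A)>0$, Newton's inequality $(\func{tr}_{g}A)^{2}\geq 3\sigma_{2}(A)$ together with $\func{tr}_{g}A=J$ shows that $J$ vanishes nowhere; as $\{J>0\}$ and $\{J<0\}$ are open, disjoint and cover the connected manifold $M$, either $J>0$ everywhere or $J<0$ everywhere. The second case would give $R<0$ on $M$, hence $\langle L_{g}1,1\rangle_{L^{2}}=\int_{M}R\,d\mu<0$ and $Y(g)<0$, contradicting the hypothesis; so $J>0$ on $M$. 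Moreover $\left\vert A\right\vert ^{2}=(\func{tr}_{g}A)^{2}-2\sigma_{2}(A)=J^{2}-2\sigma_{2}(A)<J^{2}$, so $\left\vert A\right\vert <J$ and every eigenvalue of $A$ lies in $(-J,J)$; this gives at once $Jg-A>0$ and $Rc=A+Jg>0$, and by compactness there are constants $\kappa ,\delta >0$ with $Jg-A\geq \kappa g$, $Rc\geq \kappa g$ and $\sigma _{2}(A)\geq \delta$.

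For $u$ a nonzero constant one has $Pu=-\tfrac12 Qu$, so $E(u)=-\tfrac12\left(\int_{M}Q\,d\mu\right)u^{2}>0$ because $Q\leq 0$ and $Q\not\equiv 0$. For non-constant $u$ I would start from the identity, valid in dimension $3$ for any metric and obtained from (\ref{eq4.21}) via the Bochner formula $\int_{M}(\Delta u)^{2}d\mu=\int_{M}\left\vert\nabla^{2}u\right\vert^{2}d\mu+\int_{M}Rc(\nabla u,\nabla u)\,d\mu$, an integration by parts using $\func{div}A=dJ$, and the last identity in (\ref{eq4.1}):
\[
E(u)=\int_{M}\left\vert\nabla^{2}u+\tfrac12 Au\right\vert^{2}d\mu+2\int_{M}\left(Jg-A\right)(\nabla u,\nabla u)\,d\mu-\tfrac32\int_{M}\sigma_{2}(A)\,u^{2}\,d\mu .
\]
By the first paragraph the first term is $\geq 0$ and the second is $\geq 2\kappa\int_{M}\left\vert\nabla u\right\vert^{2}d\mu\geq 0$; only the last term has the wrong sign, and here $Q\leq 0$ enters: from (\ref{eq4.1}), $4\sigma_{2}(A)=Q+\Delta J+\tfrac12 J^{2}\leq\Delta J+\tfrac12 J^{2}$, hence $-\tfrac32\sigma_{2}(A)\geq-\tfrac38\Delta J-\tfrac3{16}J^{2}$ and therefore
\[
E(u)\geq\int_{M}\left\vert\nabla^{2}u+\tfrac12 Au\right\vert^{2}d\mu+2\int_{M}\left(Jg-A\right)(\nabla u,\nabla u)\,d\mu-\tfrac38\int_{M}(\Delta J)u^{2}\,d\mu-\tfrac3{16}\int_{M}J^{2}u^{2}\,d\mu .
\]

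It then remains to absorb the last two terms. Using $\int_{M}(\Delta J)u^{2}\,d\mu=2\int_{M}Ju\Delta u\,d\mu+2\int_{M}J\left\vert\nabla u\right\vert^{2}d\mu$, I would bound $\int_{M}Ju\Delta u\,d\mu$ by Young's inequality against a small multiple of $\int_{M}(\Delta u)^{2}d\mu$, estimate that $\int_{M}(\Delta u)^{2}d\mu$ from below both by the trace part of $\int_{M}\left\vert\nabla^{2}u+\tfrac12 Au\right\vert^{2}d\mu$ and, via $\int_{M}(\Delta u)^{2}d\mu\geq\lambda_{1}(-\Delta)\int_{M}\left\vert\nabla u\right\vert^{2}d\mu$, by the Lichnerowicz gap $\lambda_{1}(-\Delta)\geq\tfrac32\kappa$ coming from $Rc\geq\kappa g$, and finally use the uniform bounds $Jg-A\geq\kappa g$ and $\sigma_{2}(A)\geq\delta$ to keep what survives strictly positive on non-constant $u$. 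Combined with the constant case, this yields $E(u)>0$ for every nonzero $u$.

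I expect this last quantitative step to be the main obstacle: the $\Gamma_{2}^{+}$ information gives only $\left\vert A\right\vert<J$, which is too weak to make $(\Delta u)^{2}+J\left\vert\nabla u\right\vert^{2}$ dominate $4A(\nabla u,\nabla u)$ pointwise, so one must play the Hessian square, the Newton-type term $\int_{M}(Jg-A)(\nabla u,\nabla u)\,d\mu$, the hypothesis $Q\leq 0$, and the curvature/spectral estimates against one another simultaneously, with all constants actually closing — this is the technical heart of the argument and the reason the statement carries all four hypotheses. Once that is done, $E(u)>0$ for all nonzero $u\in H^{2}(M)$, i.e. $P>0$.
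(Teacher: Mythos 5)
Your preliminary reductions are correct: $\sigma _{2}\left( A\right) >0$ plus $Y\left( g\right) >0$ forces $J>0$, hence $\left\vert A\right\vert <J$, $Jg-A>0$, $Rc>0$; the constant case follows from $Pu=-\tfrac{1}{2}Qu$; and your integral identity $E\left( u\right) =\int_{M}\left\vert D^{2}u+\tfrac{1}{2}uA\right\vert ^{2}d\mu +2\int_{M}\left( Jg-A\right) \left( \nabla u,\nabla u\right) d\mu -\tfrac{3}{2}\int_{M}\sigma _{2}\left( A\right) u^{2}d\mu $ does check out (it is the full-trace analogue of the identity used in the proof of Lemma \ref{lem4.2}). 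But that identity is designed for the hypothesis of Lemma \ref{lem4.2}, where $\sigma _{2}\left( A\right) <0$ makes the zeroth order term harmless; here $\sigma _{2}\left( A\right) >0$ turns it into the enemy, and the decisive step of absorbing it is exactly what you leave open. The tools you propose cannot close it. Since $-\tfrac{3}{2}\sigma _{2}=-\tfrac{3}{8}Q-\tfrac{3}{8}\Delta J-\tfrac{3}{16}J^{2}$ exactly, your substitution discards the nonnegative piece $-\tfrac{3}{8}\int_{M}Qu^{2}d\mu $, so what you would then have to prove is the strictly stronger statement that the form of $P+\tfrac{3}{8}Q$ is positive. Moreover, after writing $\int_{M}\left( \Delta J\right) u^{2}d\mu =2\int_{M}Ju\Delta u\,d\mu +2\int_{M}J\left\vert \nabla u\right\vert ^{2}d\mu $, Young's inequality on $\int_{M}Ju\Delta u\,d\mu $ produces a term $\tfrac{C}{\epsilon }\int_{M}J^{2}u^{2}d\mu $, while the only positive zeroth order resource in your decomposition is the $\tfrac{1}{4}\int_{M}\left\vert A\right\vert ^{2}u^{2}d\mu $ hidden in the Hessian square; the hypotheses give only $\left\vert A\right\vert <J$, and for nearly Einstein examples (the Berger spheres cited right after the lemma) $\left\vert A\right\vert ^{2}$ is about $\tfrac{1}{3}J^{2}$, so $\tfrac{1}{4}\left\vert A\right\vert ^{2}$ cannot dominate even the original $\tfrac{3}{16}J^{2}$, let alone the Young residue. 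The Lichnerowicz and Poincar\'{e} constants are global spectral quantities with no pointwise relation to the size of $J$, so they cannot repair this pointwise deficit either. So the proposal is genuinely incomplete at its heart, as you yourself acknowledge.

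The lemma has a much shorter proof that avoids this impasse (the paper itself does not reprove it, quoting \cite{HY1}): do not trade $Q$ for $\sigma _{2}$ at all. Keep $-\tfrac{1}{2}\int_{M}Qu^{2}d\mu \geq 0$ intact in (\ref{eq4.21}) and estimate only the second order part: Bochner together with $\left\vert D^{2}u\right\vert ^{2}\geq \tfrac{1}{3}\left( \Delta u\right) ^{2}$ gives $\int_{M}\left( \Delta u\right) ^{2}d\mu \geq \tfrac{3}{2}\int_{M}Rc\left( \nabla u,\nabla u\right) d\mu $, hence $\int_{M}\left[ \left( \Delta u\right) ^{2}-4A\left( \nabla u,\nabla u\right) +J\left\vert \nabla u\right\vert ^{2}\right] d\mu \geq \tfrac{5}{2}\int_{M}\left( Jg-A\right) \left( \nabla u,\nabla u\right) d\mu \geq 0$ by the positivity of $Jg-A$ that you already established. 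Therefore $E\left( u\right) \geq \tfrac{5}{2}\int_{M}\left( Jg-A\right) \left( \nabla u,\nabla u\right) d\mu -\tfrac{1}{2}\int_{M}Qu^{2}d\mu $: if $u$ is non-constant the first term is strictly positive, and if $u$ is a nonzero constant the second is, since $Q\leq 0$ and $Q\not\equiv 0$. This is where all four hypotheses enter, and it yields $P>0$ with no absorption of constants required.
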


Examples satisfying assumptions in Lemma \ref{lem4.1} can be found in Berger
spheres (see \cite{HY1}). Here we give another criterion for positivity in
the same spirit as \cite[Theorem 1.6]{CHY}.

\begin{lemma}
\label{lem4.2}If $\sigma _{2}\left( A\right) <0$ and $2Jg\geq A$ (note this
implies $J\geq 0$), then $P>0$.
\end{lemma}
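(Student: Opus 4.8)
The plan is to prove that the quadratic form $E(u)=\int_M Pu\cdot u\,d\mu$ is strictly positive on $H^2(M)\setminus\{0\}$, which is the meaning of $P>0$. As usual it suffices to work with smooth $u$ and pass to $H^2(M)$ afterwards by density, since every term below is continuous on $H^2(M)$.

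I would first rewrite $E$ in a form where the curvature enters as a manifestly controlled potential, imitating the identity behind \cite[Theorem 1.6]{CHY} but with the coefficients for dimension $3$. Starting from (\ref{eq4.21}) and substituting the expression for $Q$ from (\ref{eq4.1}), integrate the $\Delta J$ term by parts, use the contracted second Bianchi identity $\func{div}A=\nabla J$ (valid for $A=Rc-Jg$ in dimension $3$) together with the Bochner formula $\int_M(\Delta u)^2\,d\mu=\int_M\big(|\nabla^2u|^2+Rc(\nabla u,\nabla u)\big)\,d\mu$, and complete the square in $\nabla^2u$. The resulting identity should be
\begin{equation*}
E(u)=\int_M\Big(\big|\nabla^2u+\tfrac12 uA\big|^2-\tfrac32\sigma_2(A)\,u^2+2\,(Jg-A)(\nabla u,\nabla u)\Big)\,d\mu .
\end{equation*}
Writing $A=\mathring A+\tfrac{J}{3}g$ and splitting the Hessian into trace-free and trace parts, the leading term equals $\big|\mathring{\nabla^2u}+\tfrac12 u\mathring A\big|^2+\tfrac13\big(\Delta u+\tfrac12 uJ\big)^2$, and pointwise $|\mathring A|^2=\tfrac23 J^2-2\sigma_2(A)$ while $A\le 2Jg\Leftrightarrow\mathring A\le\tfrac53 Jg$.

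Two consequences are then read off directly. Since $\sigma_2(A)<0$, the zeroth-order term $-\tfrac32\sigma_2(A)u^2$ is a strictly positive potential where $u\neq0$. And the hypotheses $\sigma_2(A)<0$, $2Jg\ge A$ force $J>0$ everywhere: at a zero of $J$, $2Jg\ge A$ would give $A\le0$, hence $A=0$ there since $\func{tr}A=J=0$, contradicting $\sigma_2(A)<0$; so $J\ge c_0>0$ on the compact $M$.

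The remaining step — which I expect to be the main obstacle — is to dominate the gradient term $2(Jg-A)(\nabla u,\nabla u)$, which need not be pointwise nonnegative because the hypothesis only gives $A\le 2Jg$ rather than $A\le Jg$. The idea is to not throw away the Hessian and potential terms but to balance them against the negative part of the gradient term: in a direction $v$ with $A(v,v)>J|v|^2$ one has $J<A(v,v)/|v|^2\le\min\{2J,|A|\}$, and $|A|^2=J^2+2\,|\sigma_2(A)|$ ties the excess over $J$ quantitatively to $|\sigma_2(A)|$; feeding this, together with Cauchy--Schwarz $|\langle\mathring A,\mathring{\nabla^2u}\rangle|\le|\mathring A|\,|\mathring{\nabla^2u}|$ and $J\ge c_0>0$, into the trace-decomposed identity, one aims to absorb the negative contribution into $\big|\nabla^2u+\tfrac12 uA\big|^2-\tfrac32\sigma_2(A)u^2$. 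Verifying that the factor $2$ in ``$2Jg\ge A$'' is exactly what this absorption can afford is the heart of the matter; once it is established, $E(u)>0$ for all $u\neq0$, i.e.\ $P>0$.
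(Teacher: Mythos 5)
Your integral identity is in fact correct: expanding $\left| D^{2}u+\frac{1}{2}uA\right| ^{2}$ and using $\sigma _{2}\left( A\right) =\frac{1}{2}\left( J^{2}-\left\vert A\right\vert ^{2}\right) $, it agrees term by term with what the Bochner formula plus the Bianchi identity $\func{div}A=\nabla J$ (applied to the $\Delta J$ term in $Q$) produce, namely $E\left( u\right) =\int_{M}\left[ \left\vert D^{2}u\right\vert ^{2}+uA_{ij}u_{ij}-2A\left( \nabla u,\nabla u\right) +2J\left\vert \nabla u\right\vert ^{2}+\left( \left\vert A\right\vert ^{2}-\frac{3}{4}J^{2}\right) u^{2}\right] d\mu $. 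The genuine gap is exactly the step you flag as the heart of the matter: with your completion of the square the gradient term is $2\left( Jg-A\right) \left( \nabla u,\nabla u\right) $, which the hypothesis $2Jg\geq A$ does not make nonnegative, and the absorption you sketch cannot be carried out pointwise. The term $\left| D^{2}u+\frac{1}{2}uA\right| ^{2}$ is second order and the potential is zeroth order; neither can dominate a first-order quantity such as $\left( A-Jg\right) \left( \nabla u,\nabla u\right) $ at a point (near a point where $A$ has an eigenvalue in $\left( J,2J\right] $ take $u$ with large gradient but small Hessian and small amplitude). Any real absorption must come from a further integration by parts, i.e. from rebalancing the identity itself, and your Cauchy--Schwarz/eigenvalue heuristic does not supply that.

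That rebalancing is precisely how the paper proceeds. Write $\Theta =D^{2}u-\frac{\Delta u}{3}g$ and $\mathring{A}=A-\frac{J}{3}g$; since $\left\vert D^{2}u\right\vert ^{2}=\left\vert \Theta \right\vert ^{2}+\frac{1}{3}\left( \Delta u\right) ^{2}$, the Bochner identity can be solved for $\left( \Delta u\right) ^{2}$ to give $\int_{M}\left( \Delta u\right) ^{2}d\mu =\int_{M}\left[ \frac{3}{2}\left\vert \Theta \right\vert ^{2}+\frac{3}{2}A\left( \nabla u,\nabla u\right) +\frac{3}{2}J\left\vert \nabla u\right\vert ^{2}\right] d\mu $, which trades part of the Hessian energy for extra gradient terms. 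Treating $\Delta J$ via $\func{div}A=\nabla J$ as above and completing the square in the trace-free quantities (with the coefficient $\frac{3}{2}$ rather than $1$) yields
\begin{equation*}
E\left( u\right) =\int_{M}\left[ \frac{3}{2}\left\vert \Theta +\frac{1}{2}u\mathring{A}\right\vert ^{2}+\left( 2Jg-A\right) \left( \nabla u,\nabla u\right) -\frac{5}{4}\sigma _{2}\left( A\right) u^{2}\right] d\mu ,
\end{equation*}
and now the hypotheses act pointwise: $2Jg\geq A$ makes the gradient term nonnegative, $\sigma _{2}\left( A\right) <0$ makes the potential strictly positive, so $E\left( u\right) >0$ for every nonzero $u\in H^{2}\left( M\right) $, i.e. $P>0$. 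Your observation that $J>0$ everywhere is correct but becomes unnecessary once the identity is arranged this way; the essential missing idea in your write-up is the trace decomposition that converts the coefficient $2\left( Jg-A\right) $ into $2Jg-A$.
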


\begin{proof}
Let 
\begin{equation*}
\Theta =D^{2}u-\frac{\Delta u}{3}g
\end{equation*}%
be the traceless Hessian and%
\begin{equation*}
\mathring{A}=A-\frac{J}{3}g
\end{equation*}%
be the traceless Schouten tensor. For convenience we use $A\sim B$ to mean $%
\int_{M}Ad\mu =\int_{M}Bd\mu $. First we derive the Bochner identity,%
\begin{eqnarray*}
\left( \Delta u\right) ^{2} &=&u_{ii}u_{jj}\sim -u_{iij}u_{j}=-\left(
u_{iji}-R_{ijik}u_{k}\right) u_{j}\sim u_{ij}u_{ij}+Rc_{jk}u_{j}u_{k} \\
&=&\left\vert D^{2}u\right\vert ^{2}+Rc\left( \nabla u,\nabla u\right)
=\left\vert D^{2}u\right\vert ^{2}+A\left( \nabla u,\nabla u\right)
+J\left\vert \nabla u\right\vert ^{2}.
\end{eqnarray*}%
Hence%
\begin{equation*}
\left( \Delta u\right) ^{2}\sim \left\vert \Theta \right\vert ^{2}+\frac{%
\left( \Delta u\right) ^{2}}{3}+A\left( \nabla u,\nabla u\right)
+J\left\vert \nabla u\right\vert ^{2}.
\end{equation*}%
In another way%
\begin{equation*}
\left( \Delta u\right) ^{2}\sim \frac{3}{2}\left\vert \Theta \right\vert
^{2}+\frac{3}{2}A\left( \nabla u,\nabla u\right) +\frac{3}{2}J\left\vert
\nabla u\right\vert ^{2}.
\end{equation*}%
The next step is to remove the $\Delta J$ term in $Q$ curvature. Note that%
\begin{eqnarray*}
-\Delta J\cdot u^{2} &=&-J_{ii}u^{2}\sim 2J_{i}u\cdot u_{i}=2A_{ijj}u\cdot
u_{i}\sim -2A_{ij}u_{i}u_{j}-2A_{ij}u_{ij}u \\
&=&-2A_{ij}u_{i}u_{j}-2A_{ij}\Theta _{ij}u-\frac{2}{3}Ju\Delta u \\
&=&-2A_{ij}u_{i}u_{j}-2A_{ij}\Theta _{ij}u-\frac{1}{3}J\left( \Delta
u^{2}-2\left\vert \nabla u\right\vert ^{2}\right) \\
&\sim &-2A_{ij}u_{i}u_{j}-2A_{ij}\Theta _{ij}u-\frac{1}{3}\Delta J\cdot
u^{2}+\frac{2}{3}J\left\vert \nabla u\right\vert ^{2}.
\end{eqnarray*}%
Hence%
\begin{eqnarray*}
-\Delta J\cdot u^{2} &\sim &-3A\left( \nabla u,\nabla u\right)
-3A_{ij}\Theta _{ij}u+J\left\vert \nabla u\right\vert ^{2} \\
&=&-3A\left( \nabla u,\nabla u\right) -3\mathring{A}_{ij}\Theta
_{ij}u+J\left\vert \nabla u\right\vert ^{2}.
\end{eqnarray*}%
It follows that%
\begin{eqnarray*}
&&\left( \Delta u\right) ^{2}+J\left\vert \nabla u\right\vert ^{2}-4A\left(
\nabla u,\nabla u\right) -\frac{1}{2}Qu^{2} \\
&\sim &\frac{3}{2}\left\vert \Theta \right\vert ^{2}+\frac{3}{2}\mathring{A}%
_{ij}\Theta _{ij}u+2J\left\vert \nabla u\right\vert ^{2}-A\left( \nabla
u,\nabla u\right) -\frac{3}{4}J^{2}u^{2}+\left\vert A\right\vert ^{2}u^{2} \\
&=&\frac{3}{2}\left\vert \Theta +\frac{1}{2}u\mathring{A}\right\vert
^{2}+2J\left\vert \nabla u\right\vert ^{2}-A\left( \nabla u,\nabla u\right) -%
\frac{5}{8}\left( J^{2}-\left\vert A\right\vert ^{2}\right) u^{2}.
\end{eqnarray*}%
In another word%
\begin{eqnarray*}
E\left( u\right) &=&\frac{3}{2}\int_{M}\left\vert \Theta +\frac{1}{2}u%
\mathring{A}\right\vert ^{2}d\mu +\int_{M}\left[ 2J\left\vert \nabla
u\right\vert ^{2}-A\left( \nabla u,\nabla u\right) \right] d\mu \\
&&-\frac{5}{8}\int_{M}\left( J^{2}-\left\vert A\right\vert ^{2}\right)
u^{2}d\mu .
\end{eqnarray*}%
The positivity follows.
\end{proof}

The assumption in Lemma \ref{lem4.2} is satisfied by $S^{2}\times S^{1}$
with the product metric and some Berger's spheres (see \cite{HY1}).

Conditions P$^{+}$ and NN$^{+}$ are hard to check in general, on the other
hand, they are hard to use too. The closely related conditions P and NN can
be introduced.

\begin{definition}[\protect\cite{HY1}]
\label{def4.2}Let $\left( M,g\right) $ be a $3$ dimensional smooth compact
Riemannian manifold. If $u\in H^{2}\left( M\right) $ with $u=0$ somewhere
would imply $E\left( u\right) \geq 0$, then we say the metric $g$ (or the
associated Paneitz operator) satisfies condition NN. If $u\in H^{2}\left(
M\right) $ is a nonzero function with $u=0$ somewhere would imply $E\left(
u\right) >0$, then we say the metric $g$ satisfies condition P.
\end{definition}

Condition NN can be used to identify the limit function $u$, when $u$
touches zero in the brief discussion after Problem \ref{prob4.3} (see \cite%
{HY1}).

The standard sphere $S^{3}$ does not satisfy condition P$^{+}$. Indeed, let $%
x$ be the coordinate given by the stereographic projection with respect to
north pole $N$, then the Green's function of $P$ at $N$ can be written as%
\begin{equation}
G_{N}=-\frac{1}{4\pi }\frac{1}{\sqrt{\left\vert x\right\vert ^{2}+1}}.
\label{eq4.23}
\end{equation}%
In particular, $E\left( G_{N}\right) =G_{N}\left( N\right) =0$.

\begin{theorem}[\protect\cite{YZ}]
\label{thm4.6}$Y_{4}\left( S^{3},g_{S^{3}}\right) $ is achieved at the
standard metric.
\end{theorem}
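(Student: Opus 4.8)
The plan is to identify $Y_{4}(S^{3},g_{S^{3}})$, up to an explicit positive constant and a sign, with the sharp constant in a reversed Hardy--Littlewood--Sobolev inequality on $S^{3}$ whose extremals form exactly the M\"obius orbit of the constant functions, so that attainment at the standard metric becomes a rigidity statement for those extremals. First I would make the relevant objects explicit on the round sphere. Since $Rc=2g$, $R=6$ and $Q=\frac{15}{8}$ on $(S^{3},g_{S^{3}})$, formula $(\ref{eq4.3})$ gives
\begin{equation*}
P=\Delta^{2}+\tfrac{1}{2}\Delta-\tfrac{15}{16}=\bigl(-\Delta-\tfrac{5}{4}\bigr)\bigl(-\Delta+\tfrac{3}{4}\bigr),
\end{equation*}
so $\ker P=0$ and $E(u)=\int_{S^{3}}\bigl[(\Delta u)^{2}-\tfrac{1}{2}|\nabla u|^{2}-\tfrac{15}{16}u^{2}\bigr]d\mu$; in particular $E(1)=-\tfrac{1}{2}\int_{S^{3}}Q\,d\mu=-\tfrac{15}{16}\cdot 2\pi^{2}$. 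On $S^{3}$ the metric $G_{L,p}^{4}g$ is flat, so $\Gamma_{1}\equiv 0$ and by $(\ref{eq4.8})$--$(\ref{eq4.10})$ (equivalently Theorem \ref{thm4.3}, with $r_{\sigma}(T_{\Gamma_{1}})=0<1$) one has $G_{P}=H=-G_{L}^{-1}/(256\pi^{2})$. Using the classical fact that $G_{L,S^{3}}(p,q)=c'\,|x_{p}-x_{q}|^{-1}$ in stereographic coordinates together with the identity $|x_{p}-x_{q}|=|p-q|_{\mathrm{ch}}\,\Phi(x_{p})\Phi(x_{q})$ relating Euclidean and chordal distance (here $\Phi$ is the stereographic conformal factor), this yields $-G_{P}(p,q)=c''\,|p-q|_{\mathrm{ch}}$ with $c''>0$; so the kernel $K:=-G_{P}$ is positive off the diagonal and vanishes on it.

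Next I would pass to the dual formulation, exactly as in $(3.20)$--$(3.22)$ of the excerpt but in dimension $3$. Writing $\widetilde g=u^{-4}g_{S^{3}}$, the equation $\widetilde Q=\mathrm{const}$ is $Pu=\mathrm{const}\cdot u^{-7}$, i.e.\ $T_{K}(u^{-7})=\mathrm{const}\cdot u$; substituting $f=u^{-7}$ and using $(\ref{eq4.22})$ together with the conformal invariance $(\ref{eq4.6})$, the invariant $Y_{4}(g_{S^{3}})$ becomes a fixed negative multiple of
\begin{equation*}
\inf_{f\in L^{6/7}(S^{3}),\,f>0}\ \frac{\displaystyle\int_{S^{3}}\!\!\int_{S^{3}}|p-q|_{\mathrm{ch}}\,f(p)f(q)\,d\mu(p)\,d\mu(q)}{\|f\|_{L^{6/7}(S^{3})}^{2}},
\end{equation*}
where the exponent $\tfrac{6}{7}=\tfrac{2n}{2n+\alpha}$ with $n=3$, $\alpha=1$ is forced by the scaling of the Euler--Lagrange equation $T_{K}f=c\,f^{-1/7}$. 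This is precisely the conformally invariant case of the sharp reversed Hardy--Littlewood--Sobolev inequality on $S^{3}$ with kernel the chordal distance. The whole problem is invariant under the M\"obius group of $S^{3}$ (this is $(\ref{eq4.12})$ combined with the conformal covariance of $P$), so $\rho\equiv 1$ and its M\"obius images are all critical points; the claim is that this orbit realizes the infimum, with value $E(1)\,(\mu(S^{3}))^{1/3}=-\tfrac{15}{16}(2\pi^{2})^{4/3}$.

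The heart of the matter is thus the sharp reversed HLS inequality on $S^{3}$ together with its rigidity: for every $f\ge 0$ one has $\int\!\!\int|p-q|_{\mathrm{ch}}f(p)f(q)\ge C\|f\|_{6/7}^{2}$, with equality precisely when $f$ is a M\"obius transform of a constant --- equivalently, transplanted to $\mathbb{R}^{3}$, when $f(x)=a\,(b^{2}+|x-x_{0}|^{2})^{-7/2}$, which is exactly $u^{-7}$ for $u$ a standard conformal factor, i.e.\ $\widetilde g$ a M\"obius image of $g_{S^{3}}$. I would obtain this either by invoking the reversed HLS inequality of Dou--Zhu (and its refinements) on $\mathbb{R}^{n}$ in the case $n=3$, $\lambda=-1$, transplanted to $S^{3}$, or self-containedly: produce an extremal on the compact manifold $S^{3}$ by direct minimization / concentration--compactness, then classify the positive solutions of the integral Euler--Lagrange equation $T_{K}f=c\,f^{-1/7}$ by a moving-plane or moving-sphere argument adapted to the negative exponent, in the style of Chen--Li--Ou and Li, or by the method of competing symmetries. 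Once the extremals are pinned down, the minimizers of the variational problem defining $Y_{4}(S^{3},g_{S^{3}})$ are exactly the M\"obius orbit of $\rho\equiv 1$; in particular the infimum is attained, and attained at the standard metric.

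I expect this last ingredient to be the main obstacle. The reversed inequality is genuinely more delicate than the classical one: symmetric decreasing rearrangement cannot be applied in the naive way because the kernel $|p-q|^{\alpha}$ is increasing rather than decreasing, existence of extremals is not automatic, and the rigidity --- that only the M\"obius orbit of constants is extremal --- is the subtle point; it is precisely what upgrades "$Y_{4}(S^{3})$ is finite" (which by Theorem \ref{thm4.5} is equivalent to condition NN$^{+}$ and is comparatively soft) to "$Y_{4}(S^{3})$ is achieved at the standard metric". An alternative route, closer in spirit to Bianchi--Egnell and Lieb's stability arguments, is to first use a Hersch-type conformal balancing (choose a M\"obius map so that the center of mass of $f\,d\mu$ in $\mathbb{R}^{4}$ is the origin), reducing by conformal invariance to balanced competitors, and then exploit that $P_{S^{3}}$ is strictly positive on the $L^{2}$-orthogonal complement of the constants with lowest non-constant mode the restrictions of the coordinate functions --- which balancing annihilates --- to obtain a coercive second-variation estimate near $\rho\equiv 1$; but turning this local estimate into a global one again forces a compactness argument resting on the conformal invariance.
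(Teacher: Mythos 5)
Your preliminary computations are all correct: on the round $S^{3}$ one has $P=\Delta^{2}+\tfrac12\Delta-\tfrac{15}{16}$, $\ker P=0$, $E(1)=-\tfrac{15}{16}\cdot 2\pi^{2}$, $\Gamma_{1}\equiv 0$ so that Theorem \ref{thm4.3} gives $G_{P}=H$, and $-G_{P}(p,q)$ is indeed a positive multiple of the chordal distance. The genuine gap is the sentence in which you declare that, after substituting $f=u^{-7}$, ``the invariant $Y_{4}(g_{S^{3}})$ becomes a fixed negative multiple of'' the reversed Hardy--Littlewood--Sobolev quotient $\inf_{f>0}\int\!\!\int |p-q|_{\mathrm{ch}}f(p)f(q)\,/\,\|f\|_{L^{6/7}}^{2}$. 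That identification is not a reformulation; it is itself the hard step, and nothing in (\ref{eq4.22}) or (\ref{eq4.6}) delivers it. The two extremal problems agree only at critical points. One direction is easy: from the sharp primal inequality, reverse H\"older with conjugate exponent $-6$ applied to $u=T_{K}f$ yields the reversed HLS bound. But you need the converse, and the Lieb-type Legendre duality that equates sharp Sobolev and sharp HLS constants is unavailable here because $P$ is indefinite on $S^{3}$ (it is strictly negative on constants), and because a general competitor $u>0$ for $Y_{4}$ has $Pu$ of no fixed sign, hence does not correspond to an admissible $f\geq 0$ in the dual problem, while $f=u^{-7}$ is admissible but $T_{K}(u^{-7})$ is unrelated to $u$ off the critical set. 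The paper's own structure makes the inequivalence visible: solvability of the integral equation $u=\tfrac12T_{K}(u^{-7})$ needs only $G_{P}<0$ (Theorem \ref{thm4.4}), whereas finiteness and attainment of $Y_{4}$ are equivalent to the additional spectral condition $\lambda_{2}(P)>0$, i.e.\ condition NN (Theorems \ref{thm4.5} and \ref{thm4.8}). So even granting the sharp reversed HLS on $S^{3}$ with its classification of extremals (Dou--Zhu, N\^go--Nguyen, or Xu's classification \cite{X} of the integral equation), you have not yet bounded $E(u)\|u^{-1}\|_{L^{6}}^{2}$ from below for arbitrary positive $u\in H^{2}$, which is exactly the content of the theorem.

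What supplies that missing coercivity in the known proofs is precisely the ingredient you bypass: the original argument of \cite{YZ} proceeds by symmetrization directly on the primal functional, and the later proofs in \cite{H,HY1} rest on Lemma \ref{lem4.3}, the identity $E(u)=\int_{\mathbb{R}^{3}}\lvert\Delta(\tau u)\rvert^{2}dx$ for $u(N)=0$, which gives condition NN for $S^{3}$, identifies the null functions (multiples of Green's functions), and then lets the direct method sketched after Problem \ref{prob4.3} close. Your alternative ending (Hersch balancing plus a second-variation estimate near $u\equiv1$, upgraded by a compactness argument) is the right kind of substitute, but as you acknowledge it is only local and the globalization is not supplied; as written, the proposal proves the sharp dual inequality on $S^{3}$ but not the statement about $Y_{4}$.
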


Indeed \cite{YZ} shows $Y_{4}\left( S^{3}\right) $ is achieved by the method
of symmetrization. All the critical points are classified by \cite{X}. In 
\cite{H, HY1}, several different approaches are given. The main ingredient
is the following observation:

\begin{lemma}[\protect\cite{HY1}]
\label{lem4.3}Let $N\in S^{3}$ be the north pole, $u\in H^{2}\left(
S^{3}\right) $ such that $u\left( N\right) =0$. Denote $x$ as the coordinate
given by the stereographic projection with respect to $N$ and%
\begin{equation*}
\tau =\sqrt{\frac{\left\vert x\right\vert ^{2}+1}{2}}.
\end{equation*}%
Then we know $\Delta \left( \tau u\right) \in L^{2}\left( \mathbb{R}%
^{3}\right) $ and 
\begin{equation}
E\left( u\right) =\int_{\mathbb{R}^{3}}\left\vert \Delta \left( \tau
u\right) \right\vert ^{2}dx,  \label{eq4.24}
\end{equation}%
here $\Delta $ is the Euclidean Laplacian.
\end{lemma}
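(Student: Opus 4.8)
The plan is to transplant everything to $\mathbb{R}^{3}$ through the stereographic projection: the round metric becomes a conformal multiple of the Euclidean metric, the Paneitz operator becomes a conformal conjugate of the flat bi-Laplacian $\Delta^{2}$, and the hypothesis $u(N)=0$ is exactly what makes the boundary terms at infinity disappear. Concretely, let $x$ be the stereographic coordinate from $N$, so that $S^{3}\setminus\{N\}\cong\mathbb{R}^{3}$ and $g_{S^{3}}=\left(\tfrac{2}{1+|x|^{2}}\right)^{2}\delta=\tau^{-4}\delta$, where $\delta$ is the Euclidean metric and $\tau=\sqrt{(|x|^{2}+1)/2}$ is exactly the weight in the statement; in particular $d\mu_{g_{S^{3}}}=\tau^{-6}\,dx$.

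On $(\mathbb{R}^{3},\delta)$ every curvature term in (\ref{eq4.3}) vanishes, so $P_{\delta}=\Delta^{2}$. Applying the conformal covariance (\ref{eq4.4}) with background metric $\delta$ and $\rho=\tau$ gives, for $\varphi\in C^{\infty}$,
\begin{equation*}
P_{g_{S^{3}}}\varphi=\tau^{7}\,\Delta^{2}(\tau\varphi);
\end{equation*}
as a check, $\tau G_{N}\equiv-\tfrac{1}{4\sqrt{2}\,\pi}$ by (\ref{eq4.23}) and $\Delta^{2}$ of a constant is $0$ away from $N$. Consequently, for $u\in C^{\infty}(S^{3})$,
\begin{equation*}
E(u)=\int_{S^{3}}P_{g_{S^{3}}}u\cdot u\,d\mu_{g_{S^{3}}}=\int_{\mathbb{R}^{3}}\tau^{7}\Delta^{2}(\tau u)\cdot u\cdot\tau^{-6}\,dx=\int_{\mathbb{R}^{3}}\Delta^{2}(\tau u)\cdot(\tau u)\,dx .
\end{equation*}
Now suppose in addition $u(N)=0$. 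Since $u$ is smooth, near $N$ one has $|u|=O(d_{S^{3}}(\cdot,N))=O(|x|^{-1})$, and rewriting covariant derivatives on $(S^{3},g_{S^{3}})$ in Euclidean terms (using $g_{S^{3}}=\tau^{-4}\delta$ and $\tau\sim|x|/\sqrt{2}$, together with $\Delta^{2}(\tau u)=\tau^{-7}P_{g_{S^{3}}}u=O(|x|^{-7})$ and interior elliptic estimates for $\Delta^{2}$) one finds $\tau u$ bounded with $|\nabla(\tau u)|=O(|x|^{-1})$, $|\Delta(\tau u)|=O(|x|^{-2})$, $|\nabla\Delta(\tau u)|=O(|x|^{-3})$. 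Integrating by parts twice over $B_{R}$, the boundary integrals on $\partial B_{R}$ are $O(R^{-1})\to0$, so $E(u)=\int_{\mathbb{R}^{3}}|\Delta(\tau u)|^{2}\,dx$, which in particular forces $\Delta(\tau u)\in L^{2}(\mathbb{R}^{3})$. Since the coefficients $A,J,Q$ in (\ref{eq4.21}) are bounded on the fixed manifold $S^{3}$, one also has the elementary bound $|E(u)|\le C\|u\|_{H^{2}(S^{3})}^{2}$, hence $\|\Delta(\tau u)\|_{L^{2}(\mathbb{R}^{3})}^{2}=E(u)\le C\|u\|_{H^{2}(S^{3})}^{2}$.

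For general $u\in H^{2}(S^{3})$ with $u(N)=0$, pick $u_{k}\in C^{\infty}(S^{3})$ with $u_{k}\to u$ in $H^{2}(S^{3})$; since $H^{2}(S^{3})\hookrightarrow C^{0}(S^{3})$ we get $u_{k}(N)\to0$, so after replacing $u_{k}$ by $u_{k}-u_{k}(N)$ we may assume $u_{k}(N)=0$ with $u_{k}\to u$ still in $H^{2}(S^{3})$. Applying the bound of the previous paragraph to the differences, $\|\Delta(\tau u_{k})-\Delta(\tau u_{j})\|_{L^{2}(\mathbb{R}^{3})}^{2}=E(u_{k}-u_{j})\le C\|u_{k}-u_{j}\|_{H^{2}(S^{3})}^{2}\to0$, so $\Delta(\tau u_{k})\to w$ in $L^{2}(\mathbb{R}^{3})$ for some $w$; and $u_{k}\to u$ uniformly gives $\tau u_{k}\to\tau u$ in $L^{2}_{loc}(\mathbb{R}^{3})$, hence $\Delta(\tau u_{k})\to\Delta(\tau u)$ in $\mathcal{D}'(\mathbb{R}^{3})$, so $w=\Delta(\tau u)\in L^{2}(\mathbb{R}^{3})$. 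Finally, $E$ is a bounded quadratic form on $H^{2}(S^{3})$, so $E(u)=\lim_{k}E(u_{k})=\lim_{k}\|\Delta(\tau u_{k})\|_{L^{2}}^{2}=\|w\|_{L^{2}}^{2}=\|\Delta(\tau u)\|_{L^{2}(\mathbb{R}^{3})}^{2}$, which is (\ref{eq4.24}).

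The only genuinely technical point is the decay step: showing that the boundary integrals at infinity vanish, i.e. establishing the decay rates for $\tau u$ and its first three derivatives near the pole. This is where $u(N)=0$ is indispensable — if $u(N)\neq0$ then $\tau u$ grows linearly, $\Delta(\tau u)$ decays only like $|x|^{-1}$, and the right-hand side of (\ref{eq4.24}) is infinite. The most transparent way to carry this out is to work instead in the stereographic coordinate $z$ centered at $N$ (so $z=x/|x|^{2}$ and $u$ is smooth near $z=0$ with $u(0)=0$), where $\tau=\sqrt{1+|z|^{2}}/(\sqrt{2}\,|z|)$ and the needed asymptotics follow directly from the Taylor expansion of $u$ at $N$.
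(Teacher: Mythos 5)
Your argument is correct: the conformal covariance (\ref{eq4.4}) applied with $g_{S^{3}}=\tau ^{-4}\delta $ indeed gives $P_{g_{S^{3}}}\varphi =\tau ^{7}\Delta ^{2}\left( \tau \varphi \right) $, the decay rates $O\left( 1\right) ,O\left( \left\vert x\right\vert ^{-1}\right) ,O\left( \left\vert x\right\vert ^{-2}\right) ,O\left( \left\vert x\right\vert ^{-3}\right) $ for $\tau u$ and its derivatives (which is where $u\left( N\right) =0$ enters) are the right ones and make the boundary terms $O\left( R^{-1}\right) $, and the density step correctly carries the identity from smooth functions vanishing at $N$ to all of $H^{2}\left( S^{3}\right) $. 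The paper states Lemma \ref{lem4.3} without proof, citing \cite{HY1}, and your route--stereographic reduction of $P$ to the flat bi-Laplacian together with the vanishing condition at the pole killing the boundary contributions--is precisely the mechanism behind the statement and essentially the argument of the cited source.
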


In particular $S^{3}$ satisfies NN. The only functions touching $0$ and
having nonpositive energy are constant multiples of Green's functions.

To help understanding the condition NN, in \cite{HY2}, new quantities $\nu
\left( M,g,p\right) $ and $\nu \left( M,g\right) $ are introduced. Let $%
\left( M,g\right) $ be a $3$ dimensional smooth compact Riemannian manifold,
for any $p\in M$, define%
\begin{equation}
\nu \left( M,g,p\right) =\inf \left\{ \frac{E\left( u\right) }{%
\int_{M}u^{2}d\mu }:u\in H^{2}\left( M\right) \backslash \left\{ 0\right\}
,u\left( p\right) =0\right\} .  \label{eq4.25}
\end{equation}%
When no confusion could arise we denote it as $\nu \left( g,p\right) $ or $%
\nu _{p}$. We also define%
\begin{eqnarray}
&&\nu \left( M,g\right)  \label{eq4.26} \\
&=&\inf_{p\in M}\nu \left( M,g,p\right)  \notag \\
&=&\inf \left\{ \frac{E\left( u\right) }{\int_{M}u^{2}d\mu }:u\in
H^{2}\left( M\right) \backslash \left\{ 0\right\} ,u\left( p\right) =0\text{
for some }p\right\} .  \notag
\end{eqnarray}%
The importance of $\nu \left( M,g\right) $ lies in that $g$ satisfies
condition P if and only if $\nu \left( g\right) >0$ and it satisfies
condition NN if and only if $\nu \left( g\right) \geq 0$. It follows from
Lemma \ref{lem4.3} that $\nu \left( S^{3},g_{S^{3}}\right) =0$. A closely
related fact is that the Green's function of Paneitz operator on $S^{3}$
vanishes at the pole. In \cite{HY2}, first and second variation of $%
G_{P}\left( N,N\right) $ and $\nu \left( S^{3},g,N\right) $ are calculated.

\begin{theorem}[\protect\cite{HY2}]
\label{thm4.7}Let $g$ be the standard metric on $S^{3}$and $h$ be a smooth
symmetric $\left( 0,2\right) $ tensor. Denote $x=\pi _{N}$, the
stereographic projection with respect to $N$ and%
\begin{equation*}
\tau =\sqrt{\frac{\left\vert x\right\vert ^{2}+1}{2}}.
\end{equation*}
Let $G_{g+th}$ be the Green's function of the Paneitz operator $P_{g+th}$,
then%
\begin{equation}
\left. \partial _{t}\right\vert _{t=0}G_{g+th}\left( N,N\right) =0
\label{eq4.27}
\end{equation}%
and%
\begin{eqnarray}
&&\left. \partial _{t}^{2}\right\vert _{t=0}G_{g+th}\left( N,N\right)
\label{eq4.28} \\
&=&-\frac{1}{64\pi ^{2}}\int_{\mathbb{R}^{3}}\left( \sum_{ij}\left( \theta
_{ikjk}+\theta _{jkik}-\left( tr\theta \right) _{ij}-\Delta \theta
_{ij}\right) ^{2}-\frac{3}{2}\left( \theta _{ijij}-\Delta tr\theta \right)
^{2}\right) dx.  \notag
\end{eqnarray}%
Here $\theta =\tau ^{4}h$ and the derivatives $\theta _{ikjk}$ etc are
partial derivatives in $\mathbb{R}^{3}$.

In particular, 
\begin{equation}
\left. \partial _{t}^{2}\right\vert _{t=0}G_{g+th}\left( N,N\right) \leq 0
\label{eq4.29}
\end{equation}%
Moreover, $\left. \partial _{t}^{2}\right\vert _{t=0}G_{g+th}\left(
N,N\right) =0$ if and only if $h=L_{X}g+f\cdot g$ for some smooth vector
fields $X$ and smooth function $f$ on $S^{3}$.

For $\nu \left( g+th,N\right) $ we have%
\begin{equation}
\left. \partial _{t}\right\vert _{t=0}\nu \left( g+th,N\right) =0
\label{eq4.30}
\end{equation}%
and%
\begin{equation}
\left. \partial _{t}^{2}\right\vert _{t=0}\nu \left( g+th,N\right)
=-16\left. \partial _{t}^{2}\right\vert _{t=0}G_{g+th}\left( N,N\right) .
\label{eq4.31}
\end{equation}
\end{theorem}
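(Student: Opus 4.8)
The plan is to differentiate the Green's‑function identity $P_{g+th}G_{g+th,N}=\delta_N$ and transplant everything to $\mathbb{R}^3$ by stereographic projection, exploiting two accidents of the round sphere: $G_N(N)=0$, and $\tau G_N$ is a \emph{constant} function. First note $\ker P_{g_{S^3}}=0$ (by Theorem~\ref{thm4.3} and Remark~\ref{rmk4.1}, since $Y>0$ and $Q>0$ on $S^3$), so the same holds for $g+th$ with $|t|$ small; hence $G_t:=G_{g+th,N}\in H^2(S^3)\subset C^{1/2}(S^3)$ depends analytically on $t$, and from the defining relation $E_{g+th}(G_t,v)=v(N)$ for all $v$ (with $E_{g+th}(u,v)=\int_M P_{g+th}u\cdot v\,d\mu$) one obtains, by differentiating in $t$ and substituting $v=\dot G_t$,
\[
\partial_t G_t(N,N)=-(\partial_t E_{g+th})(G_t,G_t),\qquad
\partial_t^2\big|_{t=0}G_t(N,N)=2E_{g_{S^3}}(\dot G_0)-(\partial_t^2 E_{g+th})\big|_{t=0}(G_N,G_N),
\]
where $\dot G_0=\partial_t|_{t=0}G_t$. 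So everything reduces to computing variations of the Paneitz quadratic form evaluated at the \emph{fixed} function $G_N$, plus the one correction term $E_{g_{S^3}}(\dot G_0)$.

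Writing $g+th=\tau^{-4}(\delta+t\theta)$ with $\theta=\tau^{4}h$ (which extends to a smooth bounded tensor on $S^3$), the conformal covariance (\ref{eq4.4}) gives $E_{g+th}(v)=E_{\delta+t\theta}(\tau v)$ whenever $v(N)=0$, which is the $t\neq0$ version of the mechanism behind Lemma~\ref{lem4.3}. Since $G_N(N)=0$ and $\tau G_N\equiv a:=-\tfrac1{4\sqrt2\,\pi}$ (by (\ref{eq4.23})), and since in dimension $3$ one has $P_{\bar g}(\mathrm{const})=-\tfrac12 Q_{\bar g}\cdot\mathrm{const}$, this collapses to
\[
E_{g+th}(G_N)=E_{\delta+t\theta}(a)=-\frac{a^2}{2}\int_{\mathbb{R}^3}Q_{\delta+t\theta}\,d\mu_{\delta+t\theta}.
\]
The first variation of total $Q$‑curvature vanishes at the flat metric $\delta$, so $\partial_t|_{t=0}E_{g+th}(G_N)=0$; combined with the first identity above this is (\ref{eq4.27}), and in particular $\dot G_0(N)=0$, i.e.\ $\dot G_0$ lies in the subspace $\{u:u(N)=0\}$ on which Lemma~\ref{lem4.3} applies.

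For the second variation I would evaluate the two pieces on $\mathbb{R}^3$. Differentiating the $Q$‑curvature identity twice and using $Rc_\delta=0$ to kill all cross terms, $\partial_t^2|_{t=0}\int_{\mathbb{R}^3}Q_{\delta+t\theta}\,d\mu_{\delta+t\theta}=\int_{\mathbb{R}^3}\bigl(-4|\dot{Rc}|^2+\tfrac{23}{16}\dot R^2\bigr)\,dx$, where $\dot{Rc},\dot R$ are the linearized Ricci and scalar curvatures of $\delta+t\theta$ and the boundary terms at infinity drop out by the asymptotics of $\theta$. For the correction term, $\dot G_0(N)=0$ lets Lemma~\ref{lem4.3} give $E_{g_{S^3}}(\dot G_0)=\int_{\mathbb{R}^3}|\Delta(\tau\dot G_0)|^2\,dx$; transplanting $P_{g_{S^3}}\dot G_0=-(\partial_t|_0 P_{g+th})G_N$ shows $\Delta(\tau\dot G_0)=-\tfrac a8\dot R$ up to a harmonic function killed by decay, whence $E_{g_{S^3}}(\dot G_0)=\tfrac{a^2}{64}\int_{\mathbb{R}^3}\dot R^2\,dx$. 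Substituting both into the second identity, the $|\dot{Rc}|^2$ coefficients cancel and (using $\dot R=\mathrm{tr}\,\dot{Rc}$) one is left with $\partial_t^2|_{t=0}G_t(N,N)=-\tfrac1{64\pi^2}\int_{\mathbb{R}^3}\bigl(4|\dot{Rc}|^2-\tfrac32\dot R^2\bigr)\,dx$, which is (\ref{eq4.28}) once one records $2\dot{Rc}_{ij}=\theta_{ikjk}+\theta_{jkik}-(\mathrm{tr}\theta)_{ij}-\Delta\theta_{ij}$ and $\dot R=\theta_{ijij}-\Delta\,\mathrm{tr}\theta$.

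Splitting $\dot{Rc}$ into tracefree and trace parts reduces (\ref{eq4.29}) to $\int_{\mathbb{R}^3}|\mathring{\dot{Rc}}|^2\ge\tfrac1{24}\int_{\mathbb{R}^3}\dot R^2$; since linearized Ricci is gauge invariant at a flat metric I may impose harmonic gauge, where $\dot{Rc}=-\tfrac12\Delta\theta$ and the constraint becomes $\mathrm{div}\,\mathring{\dot{Rc}}=\tfrac16\,d\dot R$, and a Fourier/Riesz‑transform estimate (using $|\hat\xi\otimes\hat\xi-\tfrac13\delta|^2=\tfrac23$ for unit $\hat\xi$) gives the inequality with sharp constant; equality forces $\widehat{\mathring{\dot{Rc}}}(\xi)\parallel\hat\xi\otimes\hat\xi-\tfrac13\delta$ a.e., which unwinds to $\theta$ being a Lie‑derivative term plus a conformal term, i.e.\ $h=L_Xg+f\,g$. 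For (\ref{eq4.30})–(\ref{eq4.31}), an interlacing/Weyl‑function comparison near $(g_{S^3},N)$ gives $\nu(g+th,N)=-\bigl(\int_{S^3}G_N^2\,d\mu_{g_{S^3}}\bigr)^{-1}G_{g+th}(N,N)+o(t^2)$, and the explicit value $\int_{S^3}G_N^2\,d\mu_{g_{S^3}}=\tfrac1{16}$ yields both statements. The two genuinely analytic obstacles are: (i) justifying every integration by parts on the non‑compact $\mathbb{R}^3$ (the factor $\tau$ and the tensor $\theta$ are only bounded, not decaying), so the transplanted identities carry no contribution from infinity; and (ii) the non‑pointwise inequality $\int|\mathring{\dot{Rc}}|^2\ge\tfrac1{24}\int\dot R^2$, whose sharpness and equality case encode (\ref{eq4.29}) and the rigidity $h=L_Xg+f\,g$.
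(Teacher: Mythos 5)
This survey states Theorem~\ref{thm4.7} as a quotation from \cite{HY2} and contains no proof of it, so there is nothing in the present text to compare your route against; I can only assess your plan on its own terms. Its skeleton is sound, and the arithmetic closes: with $a=\tau G_N\equiv-\tfrac{1}{4\sqrt2\,\pi}$ one has $a^2=\tfrac1{32\pi^2}$, your two pieces give $\partial_t^2\big|_0G(N,N)=\tfrac{a^2}{32}\int\dot R^2+\tfrac{a^2}{2}\int\bigl(-4|\dot{Rc}|^2+\tfrac{23}{16}\dot R^2\bigr)=-\tfrac1{64\pi^2}\int\bigl(4|\dot{Rc}|^2-\tfrac32\dot R^2\bigr)$, which is exactly (\ref{eq4.28}); the reduction of (\ref{eq4.29}) to $\int|\mathring{\dot{Rc}}|^2\ge\tfrac1{24}\int\dot R^2$ is right, and the eigenvalue estimate under $S\hat\xi=\tfrac16\widehat{\dot R}\,\hat\xi$, $\operatorname{tr}S=0$ gives $|S|^2\ge\tfrac32\cdot\tfrac1{36}\widehat{\dot R}^2$ with the correct equality case; and $\int_{S^3}G_N^2\,d\mu=\tfrac1{16}$ is correct, so the Weyl-function zero of $z\mapsto G_{P_t-z}(N,N)$ in the spectral gap around $0$ does yield (\ref{eq4.30})--(\ref{eq4.31}).

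That said, several steps are asserted rather than proved, and one is mis-reasoned even though the conclusion is right. The formula $\partial_t^2\big|_0\int Q_{\delta+t\theta}\,d\mu=\int\bigl(-4|\dot{Rc}|^2+\tfrac{23}{16}\dot R^2\bigr)dx$ is \emph{not} obtained because ``$Rc_\delta=0$ kills all cross terms'': the terms $-\tfrac12\dot\Delta\dot R$ and $\dot Q\cdot\tfrac12\operatorname{tr}\theta$ survive pointwise and only cancel against each other after integration by parts, so this is part of the boundary-term issue you defer, not a pointwise triviality. The harmonic-gauge step is both unnecessary and delicate (solving for the gauge field with merely bounded, non-decaying $\theta$); the linearized contracted Bianchi identity $\operatorname{div}\dot{Rc}=\tfrac12 d\dot R$ holds identically at the flat background, so the Fourier argument runs with no gauge fixing and you should drop that detour. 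In the rigidity case you obtain $\theta=L_X\delta+\phi\,\delta$ on $\mathbb{R}^3$ but must still show $X$ and the conformal factor extend smoothly across $N$ to give $h=L_Xg+f\,g$ on $S^3$ (and note the converse direction is immediate from conformal/diffeomorphism invariance of $G(N,N)\equiv0$ on the round sphere). Finally, the two obstacles you flag yourself are genuine work: the cutoff argument near $N$ justifying $E_{g+th}(G_N)=E_{\delta+t\theta}(a)$ and the identification $\Delta(\tau\dot G_0)=-\tfrac a8\dot R$ (density of $\{\Delta(\tau v):v(N)=0\}$, or the ``$L^2$ harmonic hence zero'' argument), and the $o(t^2)$ expansion of $\nu$ needs the identification of $\nu(g+th,N)$ with the unique zero of the Weyl function in the gap, via discreteness of the constrained spectrum and continuity in $t$. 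None of these should fail, but as written they are the proof, not footnotes to it.
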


In \cite{HY2}, a close relation between condition NN and the second
eigenvalue of Paneitz operator is given.

\begin{theorem}[\protect\cite{HY2}]
\label{thm4.8}Let $\left( M,g\right) $ be a $3$ dimensional smooth compact
Riemannian manifold with $Y\left( g\right) >0$ and $r_{\sigma }\left(
T_{\Gamma _{1}}\right) <1$, then the following statements are equivalent:

\begin{enumerate}
\item $Y_{4}\left( g\right) >-\infty $.

\item $\lambda _{2}\left( P\right) >0$.

\item $\nu \left( g\right) \geq 0$ i.e. $\left( M,g\right) $ satisfies
condition NN.
\end{enumerate}
\end{theorem}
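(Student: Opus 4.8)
The plan is to establish $(2)\Leftrightarrow(3)$ and $(1)\Leftrightarrow(2)$. Throughout I use the two outputs of Theorem~\ref{thm4.3} under the standing hypotheses: $\ker P=0$, and $G_{P}(p,q)<0$ for $p\neq q$; Theorem~\ref{thm4.3} also gives $G_{P,p}(p)\le 0$ for every $p$, with equality at some point only when $(M,g)$ is conformally diffeomorphic to the standard $S^{3}$. Since $\ker P=0$, the operator $P\colon H^{2}(M)\to H^{-2}(M)$ is invertible, so $T_{G_{P}}=P^{-1}$ is a compact self-adjoint operator on $L^{2}(M)$ with continuous kernel $G_{P}$ and eigenvalues $1/\lambda_{k}(P)$; as $-G_{P}$ is continuous and strictly positive off the diagonal, $T_{-G_{P}}$ is a primitive positive operator, and the Jentzsch--Krein--Rutman theorem applies: its spectral radius is a simple eigenvalue carrying a strictly positive eigenfunction, which therefore corresponds to a negative simple eigenvalue $\mu_{0}<0$ of $P$, with positive eigenfunction $\psi>0$, whose modulus is strictly smaller than that of every other eigenvalue of $P$. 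In particular $P$ is never positive definite. The round sphere will be handled separately, in the one place it matters, by quoting Theorem~\ref{thm4.6}.

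The implication $(3)\Rightarrow(2)$ is the easy direction: if $\lambda_{2}(P)\le 0$ then, since $\ker P=0$, $\lambda_{2}(P)<0$, so $E(u)<0$ for every nonzero $u$ in the two-dimensional span $V$ of the first two eigenfunctions; intersecting $V$ with the hyperplane $\{u\in H^{2}:u(p)=0\}$ produces, for every $p$, a nonzero $u$ with $u(p)=0$ and $E(u)<0$, hence $\nu(g,p)<0$ and $\nu(g)<0$, i.e.\ $(M,g)$ fails NN. For the converse $(2)\Rightarrow(3)$, assume $\lambda_{2}(P)>0$. Then $\lambda_{1}(P)<0$ is the unique negative eigenvalue, hence the eigenvalue of smallest modulus, hence $\mu_{0}=\lambda_{1}(P)$ and $\psi$ is a first eigenfunction, and (as $\lambda_{2}(P)>0$) $E$ is positive definite on $W:=\psi^{\perp}\cap H^{2}$. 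Fix $p$ and $u\in H^{2}$ with $u(p)=0$, and write $u=a\psi+w$ with $w\in W$, so that $E(u)=\lambda_{1}a^{2}\|\psi\|_{L^{2}}^{2}+E(w)$ and $a\,\psi(p)=-w(p)$. On $W$ the continuous functional $w\mapsto w(p)$ is represented, relative to the inner product defined by $E$, by the element $\eta_{p}\in W$ solving $P\eta_{p}=\delta_{p}+c\,\psi$, $\eta_{p}\perp\psi$; one finds $c=-\psi(p)/\|\psi\|_{L^{2}}^{2}$ and $E(\eta_{p})=\eta_{p}(p)=G_{P,p}(p)+\psi(p)^{2}/(|\lambda_{1}|\,\|\psi\|_{L^{2}}^{2})>0$. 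Cauchy--Schwarz gives $w(p)^{2}\le E(w)\,\eta_{p}(p)$, and substituting,
\[
E(u)\ \ge\ w(p)^{2}\left(\frac{1}{\eta_{p}(p)}-\frac{|\lambda_{1}|\,\|\psi\|_{L^{2}}^{2}}{\psi(p)^{2}}\right)\ \ge\ 0 ,
\]
the last step being exactly the inequality $G_{P,p}(p)\le 0$. Hence $\nu(g)\ge 0$, i.e.\ NN. Away from $S^{3}$ one has $G_{P,p}(p)<0$, the bracket is strictly positive, and (treating the case $w(p)=0$, which forces $a=0$ and $u=w\in W$, separately) one in fact obtains $E(u)>0$ for every nonzero $u$ vanishing somewhere, i.e.\ condition P.

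For $(1)\Leftrightarrow(2)$: if $\lambda_{2}(P)>0$ then either $(M,g)$ is conformally the round $S^{3}$, so $Y_{4}(g)$ is finite by Theorem~\ref{thm4.6}, or, by the previous paragraph, $(M,g)$ satisfies condition P, hence P$^{+}$, and Theorem~\ref{thm4.5} gives that $Y_{4}(g)$ is achieved and finite; this proves $(2)\Rightarrow(1)$. For $(1)\Rightarrow(2)$ I argue by contraposition: suppose $\lambda_{2}(P)\le 0$, so $\lambda_{2}(P)<0$ and $P$ has at least two negative eigenvalues. Take the Perron pair $(\mu_{0},\psi)$ with $\psi>0$ from the first paragraph and an eigenfunction $\phi$ for a negative eigenvalue $\mu_{1}\neq\mu_{0}$ (such exists since $\lambda_{2}(P)<0$ and $\mu_{0}$ is simple); then $\phi\perp_{L^{2}}\psi$, and since $\psi>0$ the function $\phi$ must change sign. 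With $t_{0}:=\min\{\psi(q)/(-\phi(q)):\phi(q)<0\}>0$, the function $u:=\psi+t_{0}\phi$ satisfies $u\ge 0$, $u\not\equiv 0$, $u(q_{0})=0$ at a point $q_{0}$ realizing the minimum, and $E(u)=\mu_{0}\|\psi\|_{L^{2}}^{2}+t_{0}^{2}\mu_{1}\|\phi\|_{L^{2}}^{2}<0$ (the cross term drops out because $\psi\perp\phi$). Thus $(M,g)$ violates condition NN$^{+}$, so Theorem~\ref{thm4.5} forces $Y_{4}(g)=-\infty$, i.e.\ $\neg(1)$.

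The technical heart is $(2)\Rightarrow(3)$, and the only genuine subtlety there is controlling the sign of the bracket through $G_{P,p}(p)\le 0$; this is precisely where the positivity conclusion of Theorem~\ref{thm4.3} and the ``mass'' quantity $G_{P,p}(p)$ enter, and the round sphere is the borderline case $G_{P,p}(p)=0$, which is the reason NN rather than P is the right formulation of $(3)$. Everything else is soft: $(3)\Rightarrow(2)$ and the construction in $\neg(2)\Rightarrow\neg(1)$ use only $\ker P=0$, the Perron eigenfunction from Jentzsch's theorem, and elementary linear algebra, together with the finiteness criterion of Theorem~\ref{thm4.5}. Two points must be checked with care: the joint continuity of $G_{P}$ on $M\times M$, so that $G_{P,p}(p)$ is well defined and $\le 0$ by continuity, which follows from $\ker P=0$, elliptic regularity and the embedding $H^{2}(M)\subset C^{\frac{1}{2}}(M)$ in dimension $3$; and the boundedness of the evaluation functional $u\mapsto u(p)$ on $H^{2}(M)$, used to produce $\eta_{p}$.
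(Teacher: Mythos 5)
Your argument is correct, but note that this survey does not actually prove Theorem \ref{thm4.8}; it is quoted from \cite{HY2}, so there is no in-paper proof to compare against. What you have written is a complete and self-contained derivation from exactly the ingredients the survey supplies: the verification of the representer identity checks out, namely with $u=a\psi+w$, $w\perp\psi$, $u(p)=0$ one gets $E(u)=E(w)-\frac{|\lambda_{1}|\,\|\psi\|_{L^{2}}^{2}}{\psi(p)^{2}}w(p)^{2}$, and $\eta_{p}=G_{P,p}-\frac{\psi(p)}{\lambda_{1}\|\psi\|_{L^{2}}^{2}}\psi$ does satisfy $E(\eta_{p},w)=w(p)$ on $W$ and $E(\eta_{p})=\eta_{p}(p)=G_{P,p}(p)+\frac{\psi(p)^{2}}{|\lambda_{1}|\,\|\psi\|_{L^{2}}^{2}}>0$, so the sign of the bracket reduces precisely to $G_{P,p}(p)\leq 0$, which is what Theorem \ref{thm4.3} (via $G_{P}\leq H$, $H(p,p)=0$, and the rigidity case ``$G_{P}=H$ somewhere'') provides; this is exactly the ``pole value as mass'' mechanism the survey emphasizes around Theorem \ref{thm4.7}. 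The remaining implications via Theorems \ref{thm4.5} and \ref{thm4.6}, and the construction $u=\psi+t_{0}\phi$ violating NN$^{+}$ when $\lambda_{2}(P)<0$, are also sound. Two small points you should tighten in a final write-up: you do not actually need the strict spectral dominance from Jentzsch's theorem (primitivity), only that $T_{-G_{P}}$ has a positive eigenfunction at its spectral radius, i.e.\ that $P$ has a negative eigenvalue of minimal modulus with positive eigenfunction, which under $\lambda_{2}(P)>0$ forces $\mu_{0}=\lambda_{1}(P)$ and $\psi>0$ (and simplicity is only used, conveniently but not essentially, to produce a second negative eigenvalue orthogonal to $\psi$ when $\lambda_{2}(P)<0$); and the joint continuity of $G_{P}$ on $M\times M$ is obtained more directly from the uniform convergence $G_{P}=H+\sum_{k\geq 1}\Gamma_{k}\ast H$ in Theorem \ref{thm4.3} than from elliptic regularity of $G_{P,p}$ in $q$ alone, which by itself gives only separate continuity.
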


For condition P, there is a similar statement.

\begin{corollary}[\protect\cite{HY2}]
\label{cor4.1}Let $\left( M,g\right) $ be a $3$ dimensional smooth compact
Riemannian manifold with $Y\left( g\right) >0$ and $r_{\sigma }\left(
T_{\Gamma _{1}}\right) <1$. If $\left( M,g\right) $ is not conformal
diffeomorphic to the standard $S^{3}$, then the following statements are
equivalent:

\begin{enumerate}
\item $Y_{4}\left( g\right) >-\infty $.

\item $\lambda _{2}\left( P\right) >0$.

\item $\nu \left( g\right) >0$ i.e. $\left( M,g\right) $ satisfies condition
P.
\end{enumerate}
\end{corollary}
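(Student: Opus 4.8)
The plan is to reduce Corollary~\ref{cor4.1} to Theorem~\ref{thm4.8} combined with the negativity of $G_P$ that Theorem~\ref{thm4.3} yields away from the standard sphere. Theorem~\ref{thm4.8} already shows that (1), (2) and the weaker statement ``$\nu(g)\geq 0$'' are equivalent, and ``$\nu(g)>0$'' trivially implies ``$\nu(g)\geq 0$''. Hence the corollary follows once I show that, under the standing hypotheses $Y(g)>0$ and $r_\sigma(T_{\Gamma_1})<1$, the equality $\nu(g)=0$ forces $(M,g)$ to be conformally diffeomorphic to the standard $S^3$; the assumption ``not conformally $S^3$'' then upgrades ``$\nu(g)\geq 0$'' to ``$\nu(g)>0$''.

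First I would show that when $\nu(g)=0$ this infimum is attained. Take $u_i\in H^2(M)\setminus\{0\}$ and $p_i\in M$ with $u_i(p_i)=0$, $\|u_i\|_{L^2}=1$ and $E(u_i)\to 0$. Using $\int_M|\nabla u|^2\,d\mu\leq \varepsilon\|\Delta u\|_{L^2}^2+C_\varepsilon\|u\|_{L^2}^2$ on the closed manifold $M$, the energy is coercive, $E(u)\geq \tfrac12\|\Delta u\|_{L^2}^2-C\|u\|_{L^2}^2$, so $\{u_i\}$ is bounded in $H^2(M)$. Passing to a subsequence, $u_i\rightharpoonup u$ in $H^2$, $u_i\to u$ in $H^1$ and (since $n=3$, so $H^2(M)\subset C^{1/2}(M)$) in $C^0(M)$, and $p_i\to p$. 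Then $\|u\|_{L^2}=1$, hence $u\neq 0$; $u(p)=\lim_i u_i(p_i)=0$; and by weak lower semicontinuity of $v\mapsto\|\Delta v\|_{L^2}^2$ together with strong convergence of the lower order terms, $E(u)\leq\liminf_i E(u_i)=0$. Since also $E(u)=E(u)/\|u\|_{L^2}^2\geq\nu(g,p)\geq\nu(g)=0$, we get $E(u)=0$, i.e.\ $u$ realizes $\nu(g,p)=0$.

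Next comes the Euler--Lagrange equation. Evaluation at $p$ is a bounded linear functional on $H^2(M)$, so $u$ minimizes $E(v)/\|v\|_{L^2}^2$ over the hyperplane $\{v(p)=0\}$ with critical value $0$; the Lagrange multiplier rule produces $\lambda,\mu\in\mathbb{R}$ with $Pu=\lambda u+\mu\delta_p$ in the distribution sense, and testing against $u$, using $u(p)=0$ and $E(u)=0$, gives $\lambda=0$. So $Pu=\mu\delta_p$. By Theorem~\ref{thm4.3}, $r_\sigma(T_{\Gamma_1})<1$ implies $\ker P=0$, whence $u=\mu\,G_{P,p}$; since $u\neq 0$ we have $\mu\neq 0$, and $0=u(p)=\mu\,G_{P,p}(p)$ forces $G_{P,p}(p)=0$. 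But Theorem~\ref{thm4.3} and the discussion after it show that when $(M,g)$ is not conformally diffeomorphic to $S^3$ one has $G_P(p,q)<0$ for all $p,q\in M$, in particular $G_{P,p}(p)<0$. This contradiction shows $(M,g)$ must be conformally diffeomorphic to $S^3$, which completes the reduction and hence the proof.

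The main obstacle is the first step: the pole $p$ is allowed to move along the minimizing sequence, so one needs the \emph{uniform} Sobolev embedding $H^2(M)\hookrightarrow C^{1/2}(M)$ to pass $u_i(p_i)=0$ to the limit $u(p)=0$, and the compact embedding $H^2(M)\hookrightarrow H^1(M)$ to guarantee that the indefinite lower order terms of $E$ converge. Once a genuine minimizer and its Euler--Lagrange equation are in hand, identifying it with a multiple of $G_{P,p}$ and invoking the sign of the Green's function are routine.
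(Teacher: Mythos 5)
Your proposal is correct and takes essentially the route the paper intends: Corollary~\ref{cor4.1} is stated with only a citation to \cite{HY2}, and the implicit reduction is exactly yours, namely Theorem~\ref{thm4.8} gives the equivalence of (1), (2) and $\nu \left( g\right) \geq 0$, while the strict negativity $G_{P}\left( p,q\right) <0$ for \emph{all} $p,q\in M$ (asserted in the text following Theorem~\ref{thm4.3} and Remark~\ref{rmk4.1} for metrics not conformal to the round $S^{3}$, since $G_{P}\leq H$ with $H\left( p,p\right) =0$ and equality forcing the sphere) excludes $\nu \left( g\right) =0$. Your direct-method step — coercivity of $E$ modulo $L^{2}$, compact embeddings $H^{2}\subset H^{1}$ and $H^{2}\subset C^{0}$ to handle the moving pole, the Euler--Lagrange identity $Pu=\mu \delta _{p}$, and the identification $u=\mu G_{P,p}$ via $\ker P=0$ so that $G_{P,p}\left( p\right) =0$ — is the correct bridge showing $\nu \left( g\right) =0$ forces the spherical case, so the upgrade from condition NN to condition P goes through.
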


These statements make the finiteness of $Y_{4}\left( g\right) $ and
condition NN more meaningful.

\begin{problem}
\label{prob4.4}Let $\left( M,g\right) $ be a $3$ dimensional smooth compact
Riemannian manifold, does $g$ always satisfy condition NN? Does metric with
positive Yamabe invariant always satisfy condition NN?
\end{problem}

This seems to be a difficult question. We only have a partial answer.

\begin{theorem}[\protect\cite{HY3, HY4}]
\label{thm4.9}Assume $M$ is a smooth compact $3$ dimensional manifold, denote%
\begin{equation}
\mathcal{M}=\left\{ g:Y\left( g\right) >0\text{, }r_{\sigma }\left(
T_{\Gamma _{1}}\right) <1\right\} ,  \label{eq4.32}
\end{equation}%
endowed with $C^{\infty }$ topology. Let $\mathcal{N}$ be a path connected
component of $\mathcal{M}$. If there is a metric in $\mathcal{N}$ satisfying
condition NN, then every metric in $\mathcal{N}$ satisfies condition NN.
Hence as long as the metric is not conformal equivalent to the standard $%
S^{3}$, it satisfies condition P.
\end{theorem}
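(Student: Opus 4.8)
The plan is to run an openness--closedness (``clopen'') argument on the path component $\mathcal{N}$, after replacing condition NN --- which is awkward to track directly under perturbation of the metric --- by the spectral condition $\lambda_2(P_g)>0$, where $\lambda_2(P_g)$ denotes the second eigenvalue of the Paneitz operator, i.e. the second min--max value of the Rayleigh quotient $E_g(u)/\|u\|_{L^2(d\mu_g)}^2$. Indeed, every $g\in\mathcal{N}$ lies in $\mathcal{M}$, so $Y(g)>0$ and $r_\sigma(T_{\Gamma_1})<1$, and then \thmref{thm4.8} tells us that $g$ satisfies condition NN if and only if $\lambda_2(P_g)>0$. Thus the set of metrics in $\mathcal{N}$ satisfying NN is exactly $\mathcal{N}^{+}:=\{g\in\mathcal{N}:\lambda_2(P_g)>0\}$, and the goal becomes to show $\mathcal{N}^{+}=\mathcal{N}$.

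The structural observation driving the argument is that $0$ is never an eigenvalue of $P_g$ for $g\in\mathcal{M}$: by \thmref{thm4.3}, $r_\sigma(T_{\Gamma_1})<1$ forces $\ker P_g=0$, hence $\lambda_2(P_g)\neq 0$ for all $g\in\mathcal{M}$, and in particular for all $g\in\mathcal{N}$. Consequently $\mathcal{N}$ is the disjoint union of $\mathcal{N}^{+}$ and $\mathcal{N}^{-}:=\{g\in\mathcal{N}:\lambda_2(P_g)<0\}$. Granting that $g\mapsto\lambda_2(P_g)$ is continuous on $\mathcal{N}$ for the $C^{\infty}$ topology, both $\mathcal{N}^{+}$ and $\mathcal{N}^{-}$ are open in $\mathcal{N}$; since $\mathcal{N}$ is path connected, hence connected, and $\mathcal{N}^{+}\neq\varnothing$ by hypothesis (the given NN metric $g_0\in\mathcal{N}$ has $\lambda_2(P_{g_0})>0$ by \thmref{thm4.8}), we conclude $\mathcal{N}^{+}=\mathcal{N}$, i.e. every metric in $\mathcal{N}$ satisfies NN.

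The step I expect to require the most care is the continuity of $\lambda_2(P_g)$ in the $C^{\infty}$ topology. The Paneitz operator $P_g$ is a fourth order differential operator whose coefficients --- and those of the associated bilinear form $E_g$ after integration by parts --- are universal polynomial expressions in $g$, $g^{-1}$, and finitely many derivatives of $g$ (through $\Delta^2$, $Rc$, $J$, $Q$). Hence if $g_k\to g$ in $C^{\infty}$, the forms $E_{g_k}$ and the norms $\|\cdot\|_{L^2(d\mu_{g_k})}$ converge to those of $g$ uniformly on bounded subsets of $H^2(M)$, and for $k$ large the $H^2(M,g_k)$ norms are uniformly equivalent to the $H^2(M,g)$ norm. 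Since $P_g$ is elliptic and self-adjoint on the closed manifold $M$, it has discrete spectrum bounded below with form domain the fixed space $H^2(M)$, so $\lambda_2(P_g)=\inf_{\dim V=2}\sup_{0\neq u\in V}E_g(u)/\|u\|_{L^2(d\mu_g)}^2$; feeding the uniform form-convergence into this min--max formula --- using a near-optimal two-dimensional test space for $g$ for the upper bound, and the uniform Rayleigh-quotient estimate over all two-dimensional subspaces for the lower bound --- yields $\lambda_2(P_{g_k})\to\lambda_2(P_g)$. This is also the reason the spectral quantity $\lambda_2(P)$, rather than the a priori more geometric $Y_4(g)$, is the right object to deform: $Y_4(g)$ is a supremum over a conformal class and need not vary continuously with $g$, whereas \thmref{thm4.8} ties it to the continuously varying $\lambda_2(P)$.

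Finally, for the last assertion: let $g\in\mathcal{N}$ with $(M,g)$ not conformally diffeomorphic to the standard $S^3$. By the above, $g$ satisfies condition NN, and $g\in\mathcal{M}$, so $\lambda_2(P_g)>0$. By Corollary~\ref{cor4.1}, for such a metric the conditions $\lambda_2(P_g)>0$ and ``$g$ satisfies condition P'' are equivalent; hence $g$ satisfies condition P. This completes the proof.
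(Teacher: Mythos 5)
Your argument is correct and is essentially the intended one: on $\mathcal{M}$ condition NN is converted via Theorem \ref{thm4.8} into $\lambda _{2}\left( P_{g}\right) >0$, Theorem \ref{thm4.3} gives $\ker P_{g}=0$ so $0$ is never in the spectrum, continuity of $\lambda _{2}$ under $C^{\infty }$ perturbation plus connectedness of $\mathcal{N}$ forces the sign of $\lambda _{2}$ to be constant, and Corollary \ref{cor4.1} upgrades NN to condition P away from the round $S^{3}$. The survey only cites \cite{HY3, HY4} for this statement, and your clopen formulation is the same eigenvalue-deformation argument (no negative-eigenvalue crossing through $0$ along a path) that underlies the cited proof.
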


Here we describe an application of above discussions. Let $M$ be a $3$
dimensional smooth compact manifold, $\beta \in \mathbb{R}$, we define a
functional%
\begin{equation}
F_{\beta }\left( g\right) =\int_{M}Qd\mu +\beta \int_{M}J^{2}d\mu .
\label{eq4.33}
\end{equation}%
Calculation shows that the critical metric of $F_{\beta }$ restricted to a
fixed conformal class with unit volume constraint is given by%
\begin{equation}
Q+2\beta \Delta J+\beta J^{2}=const.  \label{eq4.34}
\end{equation}

\begin{proposition}
\label{prop4.1}Assume $\left( M,g\right) $ is a $3$ dimensional smooth
compact Riemannian manifold satisfying condition P$^{+}$, $\beta \leq 0$,
then%
\begin{equation}
\sup_{\widetilde{g}\in \left[ g\right] }\widetilde{\mu }\left( M\right) ^{%
\frac{1}{3}}F_{\beta }\left( \widetilde{g}\right)  \label{eq4.35}
\end{equation}%
is achieved.
\end{proposition}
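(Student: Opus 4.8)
The plan is to realise the supremum in (\ref{eq4.35}) by the direct method in the conformal factor. Write $\widetilde{g}=\rho^{-4}g$ with $\rho\in C^{\infty}(M)$, $\rho>0$. Then $d\widetilde{\mu}=\rho^{-6}d\mu$, so $\widetilde{\mu}(M)^{1/3}=\left\Vert\rho^{-1}\right\Vert_{L^{6}}^{2}$; by (\ref{eq4.5}), $\int_{M}\widetilde{Q}\,d\widetilde{\mu}=-2\int_{M}\rho P_{g}\rho\,d\mu=-2E(\rho)$ with $E$ as in (\ref{eq4.21}); and the conformal change of scalar curvature gives $\widetilde{J}=\rho^{2}\Phi(\rho)$ with $\Phi(\rho)=2\rho\Delta\rho-4\left\vert\nabla\rho\right\vert^{2}+\tfrac{1}{4}R\rho^{2}$, hence $\int_{M}\widetilde{J}^{2}d\widetilde{\mu}=\int_{M}\rho^{-2}\Phi(\rho)^{2}d\mu\geq 0$. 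Thus the quantity in (\ref{eq4.35}) is the supremum over $\rho>0$ of
\begin{equation*}
\mathcal{F}(\rho)=\left\Vert\rho^{-1}\right\Vert_{L^{6}}^{2}\left(-2E(\rho)+\beta\int_{M}\rho^{-2}\Phi(\rho)^{2}d\mu\right),
\end{equation*}
which is invariant under $\rho\mapsto\lambda\rho$, $\lambda>0$. Since $\beta\leq 0$, the second term is nonpositive, so $\mathcal{F}(\rho)\leq\left\Vert\rho^{-1}\right\Vert_{L^{6}}^{2}\left(-2E(\rho)\right)=\widetilde{\mu}(M)^{1/3}\int_{M}\widetilde{Q}\,d\widetilde{\mu}$, which by (\ref{eq4.22}) is at most $-2Y_{4}(g)$; and $Y_{4}(g)$ is finite because $g$ satisfies P$^{+}$ (Theorem \ref{thm4.5}). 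Hence $\sup\mathcal{F}<\infty$, and it is $>-\infty$ since $\mathcal{F}(1)=\mu(M)^{1/3}F_{\beta}(g)\in\mathbb{R}$.

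Next I would take a maximizing sequence and normalise, using the scaling invariance, $\left\Vert\rho_{i}\right\Vert_{L^{2}}=1$. Absorbing the first order terms in (\ref{eq4.21}) by the interpolation $\left\Vert\nabla\rho\right\Vert_{L^{2}}^{2}\leq\varepsilon\left\Vert\Delta\rho\right\Vert_{L^{2}}^{2}+C_{\varepsilon}\left\Vert\rho\right\Vert_{L^{2}}^{2}$ gives the coercivity bound $E(\rho)\geq\tfrac{1}{2}\left\Vert\Delta\rho\right\Vert_{L^{2}}^{2}-C\left\Vert\rho\right\Vert_{L^{2}}^{2}$. By H\"older, $\left\Vert\rho_{i}^{-1}\right\Vert_{L^{6}}\geq c>0$; writing $X_{i}=-2E(\rho_{i})+\beta\int_{M}\rho_{i}^{-2}\Phi(\rho_{i})^{2}d\mu$, the fact that $\mathcal{F}(\rho_{i})=\left\Vert\rho_{i}^{-1}\right\Vert_{L^{6}}^{2}X_{i}\to\sup\mathcal{F}\in\mathbb{R}$ together with $\left\Vert\rho_{i}^{-1}\right\Vert_{L^{6}}^{2}\geq c^{2}$ forces $X_{i}$ to be bounded below; since the $\beta$-term is $\leq 0$ this bounds $E(\rho_{i})$ from above, and then the coercivity bound with $\left\Vert\rho_{i}\right\Vert_{L^{2}}=1$ shows $\{\rho_{i}\}$ is bounded in $H^{2}(M)$. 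Passing to a subsequence, $\rho_{i}\rightharpoonup\rho_{\infty}$ weakly in $H^{2}$, strongly in $C^{0}$ and in $H^{1}$, with $\rho_{\infty}\geq 0$, $\left\Vert\rho_{\infty}\right\Vert_{L^{2}}=1$, so $\rho_{\infty}\not\equiv 0$.

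The decisive step, and the only one genuinely using condition P$^{+}$, is to show $\rho_{\infty}>0$ everywhere. Suppose instead $\rho_{\infty}(p)=0$. Since $H^{2}(M)\subset C^{0,1/2}(M)$ in dimension $3$, one has $\rho_{\infty}(x)\leq C\,\overline{px}^{\,1/2}$, so $\rho_{\infty}^{-6}\geq C^{-6}\,\overline{px}^{\,-3}$ is non-integrable near $p$, i.e.\ $\left\Vert\rho_{\infty}^{-1}\right\Vert_{L^{6}}=\infty$; by Fatou (and $\rho_{i}\to\rho_{\infty}$ a.e.), $\left\Vert\rho_{i}^{-1}\right\Vert_{L^{6}}\to\infty$. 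But $\mathcal{F}(\rho_{i})=\left\Vert\rho_{i}^{-1}\right\Vert_{L^{6}}^{2}X_{i}$ stays bounded, so $X_{i}\to 0$; the $\beta$-term being $\leq 0$ then forces $\liminf\left(-2E(\rho_{i})\right)\geq 0$, hence $E(\rho_{\infty})\leq\liminf E(\rho_{i})\leq 0$ by weak lower semicontinuity of $E$ on $H^{2}$. This contradicts P$^{+}$, which says $E(\rho_{\infty})>0$ since $\rho_{\infty}\geq 0$ is nonzero and vanishes at $p$. Therefore $\rho_{\infty}>0$, and hence $\rho_{\infty}$ is bounded below on the compact $M$.

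It then remains to pass to the limit. With $\rho_{\infty}$ bounded below, $\rho_{i}^{-1}\to\rho_{\infty}^{-1}$ uniformly, so $\left\Vert\rho_{i}^{-1}\right\Vert_{L^{6}}\to\left\Vert\rho_{\infty}^{-1}\right\Vert_{L^{6}}>0$. Using the compact Sobolev embeddings (so that the $\left\vert\nabla\rho\right\vert^{2}$ and $R\rho^{2}$ terms converge strongly in $L^{2}$ and the $\rho\Delta\rho$ term weakly in $L^{2}$) one gets $\Phi(\rho_{i})\rightharpoonup\Phi(\rho_{\infty})$ weakly in $L^{2}$, and multiplying by the uniformly convergent $\rho_{i}^{-1}$ gives $\rho_{i}^{-1}\Phi(\rho_{i})\rightharpoonup\rho_{\infty}^{-1}\Phi(\rho_{\infty})$ weakly in $L^{2}$, whence $\int_{M}\rho_{\infty}^{-2}\Phi(\rho_{\infty})^{2}d\mu\leq\liminf\int_{M}\rho_{i}^{-2}\Phi(\rho_{i})^{2}d\mu$. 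Combining this with $E(\rho_{\infty})\leq\liminf E(\rho_{i})$, the convergence of the $L^{6}$-factor, and $\beta\leq 0$ (which turns the last semicontinuity into the inequality needed), one obtains, along a subsequence on which $E(\rho_{i})$ and $\int_{M}\rho_{i}^{-2}\Phi(\rho_{i})^{2}d\mu$ converge, $\mathcal{F}(\rho_{\infty})\geq\lim\mathcal{F}(\rho_{i})=\sup\mathcal{F}$; since $\rho_{\infty}>0$ is admissible, equality holds and (\ref{eq4.35}) is achieved. Elliptic regularity for the Euler--Lagrange equation (\ref{eq4.34}) then makes $\rho_{\infty}$ smooth. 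The main obstacle is the third paragraph --- ruling out a degenerate limit and controlling the blow-up of $\left\Vert\rho_{i}^{-1}\right\Vert_{L^{6}}$ --- which is precisely what the hypotheses $\beta\leq 0$ and P$^{+}$ are there to handle; the rest is routine once $\rho_{\infty}>0$ is known.
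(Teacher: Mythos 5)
Your proposal is correct and follows essentially the same route as the paper's proof: rewriting the functional in the conformal factor, using $\beta\leq 0$ to control the extra term and get an $H^{2}$ bound along an extremizing sequence, then invoking Fatou's lemma together with $H^{2}\left( M\right) \subset C^{\frac{1}{2}}\left( M\right) $ and condition P$^{+}$ to rule out a limit touching zero, and finishing by lower semicontinuity and elliptic bootstrap. The only differences (normalizing by $\left\Vert \rho _{i}\right\Vert _{L^{2}}=1$ rather than $\max_{M}u_{i}=1$, and phrasing the semicontinuity of the $\beta$-term via weak $L^{2}$ convergence of $\rho _{i}^{-1}\Phi \left( \rho _{i}\right) $ instead of $W^{1,p}$ convergence for $p<6$) are cosmetic.
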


\begin{proof}
For any positive smooth function $u$,%
\begin{equation}
F_{\beta }\left( u^{-4}g\right) =-2\int_{M}Pu\cdot ud\mu +\beta
\int_{M}u^{4} \left[ -2\Delta \left( u^{-1}\right) +Ju^{-1}\right] ^{2}d\mu .
\label{eq4.36}
\end{equation}%
Define

\begin{equation}
\Phi _{\beta }\left( u\right) =\int_{M}Pu\cdot ud\mu -\frac{\beta }{2}%
\int_{M}u^{4}\left[ -2\Delta \left( u^{-1}\right) +Ju^{-1}\right] ^{2}d\mu ,
\label{eq4.37}
\end{equation}%
then%
\begin{equation}
\sup_{\widetilde{g}\in \left[ g\right] }\widetilde{\mu }\left( M\right) ^{%
\frac{1}{3}}F_{\beta }\left( \widetilde{g}\right) =-2\inf_{\substack{ u\in
C^{\infty }\left( M\right)  \\ u>0}}\left\Vert u^{-1}\right\Vert
_{L^{6}}^{2}\Phi _{\beta }\left( u\right) .  \label{eq4.38}
\end{equation}

Let%
\begin{equation}
m=\inf_{\substack{ u\in H^{2}\left( M\right)  \\ u>0}}\left\Vert
u^{-1}\right\Vert _{L^{6}}^{2}\Phi _{\beta }\left( u\right) .  \label{eq4.39}
\end{equation}%
We claim $m$ is achieved. Indeed%
\begin{eqnarray}
\Phi _{\beta }\left( u\right) &=&\int_{M}\left[ \left( \Delta u\right)
^{2}-4A\left( \nabla u,\nabla u\right) +J\left\vert \nabla u\right\vert ^{2}-%
\frac{1}{2}Qu^{2}\right] d\mu  \label{eq4.40} \\
&&-2\beta \int_{M}\left( \Delta u-2u^{-1}\left\vert \nabla u\right\vert ^{2}+%
\frac{J}{2}u\right) ^{2}d\mu .  \notag
\end{eqnarray}%
Assume $u_{i}\in H^{2}\left( M\right) $, $u_{i}>0$ is a minimizing sequence,
by scaling we can assume $\max_{M}u_{i}=1$. Then%
\begin{equation*}
\left\Vert u_{i}^{-1}\right\Vert _{L^{6}}^{2}\Phi _{\beta }\left(
u_{i}\right) \rightarrow m.
\end{equation*}%
It follows from $\beta \leq 0$ that%
\begin{equation*}
\left\Vert u_{i}^{-1}\right\Vert _{L^{6}}^{2}E\left( u_{i}\right) \leq c.
\end{equation*}%
Hence $E\left( u_{i}\right) \leq c$. Together with the fact $0<u_{i}\leq 1$
we get $\left\Vert u_{i}\right\Vert _{H^{2}\left( M\right) }\leq c$. After
passing to a subsequence we can assume $u_{i}\rightharpoonup u$ weakly in $%
H^{2}\left( M\right) $. Then $u_{i}\rightarrow u$ uniformly. It follows that 
$\max_{M}u=1$ and $u\geq 0$. We claim $u$ can not touch $0$. Indeed if $u$
touches zero somewhere, then since $u\in H^{2}\left( M\right) $ we see $%
\int_{M}u^{-6}d\mu =\infty $. It follows from Fatou's lemma that%
\begin{equation*}
\lim \inf_{i\rightarrow \infty }\int_{M}u_{i}^{-6}d\mu \geq
\int_{M}u^{-6}d\mu =\infty .
\end{equation*}%
Hence%
\begin{equation*}
\lim \sup_{i\rightarrow \infty }E\left( u_{i}\right) \leq 0.
\end{equation*}%
It follows that $E\left( u\right) \leq 0$, this contradicts with condition P$%
^{+}$. The fact $u>0$ follows. To continue, we observe that $%
u_{i}\rightarrow u$ in $W^{1,p}\left( M\right) $ for $p<6$. Hence%
\begin{equation*}
\Phi _{\beta }\left( u\right) \leq \lim \inf_{i\rightarrow \infty }\Phi
_{\beta }\left( u_{i}\right) .
\end{equation*}%
It follows that%
\begin{equation*}
\left\Vert u^{-1}\right\Vert _{L^{6}}^{2}\Phi _{\beta }\left( u\right) \leq
\lim \inf_{i\rightarrow \infty }\left\Vert u_{i}^{-1}\right\Vert
_{L^{6}}^{2}\Phi _{\beta }\left( u_{i}\right) =m.
\end{equation*}%
$u$ is a minimizer. Calculation shows for any $\varphi \in C^{\infty }\left(
M\right) $,%
\begin{eqnarray*}
&&\int_{M}uP\varphi d\mu -2\beta \int_{M}\left( \Delta u-2u^{-1}\left\vert
\nabla u\right\vert ^{2}+\frac{J}{2}u\right) \\
&&\cdot \left( \Delta \varphi -4u^{-1}\left\langle \nabla u,\nabla \varphi
\right\rangle +2u^{-2}\left\vert \nabla u\right\vert ^{2}\varphi +\frac{J}{2}%
\varphi \right) d\mu \\
&=&const\int_{M}u^{-7}\varphi d\mu .
\end{eqnarray*}%
Standard bootstrap method shows $u\in C^{\infty }\left( M\right) $.
Proposition \ref{prop4.1} follows.
\end{proof}

\begin{corollary}
\label{cor4.2}Let $\left( M,g\right) $ be a $3$ dimensional smooth compact
Riemannian manifold satisfying condition P$^{+}$, $Y\left( g\right) >0$ and%
\begin{equation}
\int_{M}Qd\mu -\frac{1}{6}\int_{M}J^{2}d\mu \geq 0,  \label{eq4.41}
\end{equation}%
then the universal cover of $M$ is diffeomorphic to $S^{3}$.
\end{corollary}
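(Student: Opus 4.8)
The plan is to feed everything into Proposition~\ref{prop4.1} with the choice $\beta =-\frac{1}{6}$. This is legitimate, since $\left( M,g\right) $ satisfies condition P$^{+}$ and $-\frac{1}{6}\leq 0$, and the assumption $\int_{M}Qd\mu -\frac{1}{6}\int_{M}J^{2}d\mu \geq 0$ is exactly $F_{-1/6}\left( g\right) \geq 0$. Thus $\sup_{\widetilde{g}\in \left[ g\right] }\widetilde{\mu }\left( M\right) ^{1/3}F_{-1/6}\left( \widetilde{g}\right) $ is achieved, say at $\widetilde{g}=u^{-4}g$ with $u\in C^{\infty }\left( M\right) $, $u>0$; and since $g\in \left[ g\right] $ this supremum is $\geq \mu \left( M\right) ^{1/3}F_{-1/6}\left( g\right) \geq 0$, so $F_{-1/6}\left( \widetilde{g}\right) \geq 0$.

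Next I would analyze the Euler--Lagrange equation. As a maximizer over the conformal class (normalized to unit volume), $\widetilde{g}$ satisfies (\ref{eq4.34}) with $\beta =-\frac{1}{6}$, i.e. $\widetilde{Q}-\frac{1}{3}\widetilde{\Delta }\widetilde{J}-\frac{1}{6}\widetilde{J}^{2}=\widetilde{c}$ for a constant $\widetilde{c}$; integrating over $M$ (the $\widetilde{\Delta }\widetilde{J}$ term drops) gives $\widetilde{c}\,\widetilde{\mu }\left( M\right) =F_{-1/6}\left( \widetilde{g}\right) \geq 0$. Using the third expression for $Q$ in (\ref{eq4.1}), $\widetilde{Q}=-\widetilde{\Delta }\widetilde{J}+4\sigma _{2}\left( \widetilde{A}\right) -\frac{1}{2}\widetilde{J}^{2}$, together with $\sigma _{2}\left( \widetilde{A}\right) =\frac{1}{3}\widetilde{J}^{2}-\frac{1}{2}|\mathring{\widetilde{A}}|^{2}$ and $\mathring{\widetilde{A}}=Rc_{\widetilde{g}}-\frac{1}{3}R_{\widetilde{g}}\widetilde{g}$, the equation becomes
\begin{equation*}
\widetilde{\Delta }\widetilde{J}=\tfrac{1}{2}\widetilde{J}^{2}-\tfrac{3}{2}|\mathring{\widetilde{A}}|^{2}-\tfrac{3}{4}\widetilde{c},\qquad \widetilde{c}\geq 0.
\end{equation*}
Integrating this identity yields in addition $\frac{2}{3}\int_{M}\widetilde{J}^{2}d\widetilde{\mu }=2\int_{M}|\mathring{\widetilde{A}}|^{2}d\widetilde{\mu }+\widetilde{c}\,\widetilde{\mu }\left( M\right) $, while $F_{-1/6}\left( \widetilde{g}\right) \geq 0$ is equivalent to $\int_{M}\sigma _{2}\left( \widetilde{A}\right) d\widetilde{\mu }\geq \frac{1}{6}\int_{M}\widetilde{J}^{2}d\widetilde{\mu }\geq 0$.

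The core of the argument is to promote this to the pointwise conclusion $Rc_{\widetilde{g}}\geq 0$ on $M$. The plan is a maximum principle argument on the displayed equation: since $Y\left( \widetilde{g}\right) =Y\left( g\right) >0$, the scalar curvature $R_{\widetilde{g}}$ cannot be $\leq 0$ everywhere (test $L_{\widetilde{g}}$ against constants), so $\widetilde{J}>0$ somewhere; I would then use the equation, the integral identity above, and the fact that the right-hand side term $-\frac{3}{2}|\mathring{\widetilde{A}}|^{2}-\frac{3}{4}\widetilde{c}$ is $\leq 0$ to show that in fact $\widetilde{J}>0$ on all of $M$ and $\sigma _{2}\left( \widetilde{A}\right) \geq 0$ on all of $M$. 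Granting this, the elementary fact that a symmetric $3\times 3$ matrix $\widetilde{A}$ with $\sigma _{1}\left( \widetilde{A}\right) =\widetilde{J}>0$ and $\sigma _{2}\left( \widetilde{A}\right) \geq 0$ has every pairwise eigenvalue sum $\geq 0$ forces $Rc_{\widetilde{g}}=\widetilde{A}+\widetilde{J}\widetilde{g}$ to have all eigenvalues $\geq 0$. This is the step I expect to be the main obstacle — extracting two-sided pointwise curvature control from a single scalar elliptic equation together with an integral pinching — and it will likely require applying the maximum principle to an auxiliary function built from $\widetilde{J}$ and $\sigma _{2}\left( \widetilde{A}\right) $, or a Moser-type iteration, rather than to $\widetilde{J}$ or $\sigma _{2}\left( \widetilde{A}\right) $ individually.

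Finally, once $\widetilde{g}\in \left[ g\right] $ has $Rc_{\widetilde{g}}\geq 0$, I would invoke Hamilton's theorem on closed three-manifolds of nonnegative Ricci curvature: $\left( M,\widetilde{g}\right) $ is diffeomorphic to a spherical space form, or $\widetilde{g}$ is flat, or $\widetilde{g}$ is locally a Riemannian product $\Sigma ^{2}\times \mathbb{R}$ with $\Sigma $ of positive Gauss curvature. The flat case is excluded because $Y\left( \widetilde{g}\right) >0$. In the product case $Rc_{\widetilde{g}}$ has eigenvalues $\left( K,K,0\right) $ with $K>0$ pointwise, so $\widetilde{A}$ has eigenvalues $\left( \frac{K}{2},\frac{K}{2},-\frac{K}{2}\right) $ and $\sigma _{2}\left( \widetilde{A}\right) =-\frac{K^{2}}{4}<0$ everywhere, contradicting $\int_{M}\sigma _{2}\left( \widetilde{A}\right) d\widetilde{\mu }\geq 0$. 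Hence $\left( M,\widetilde{g}\right) $ is diffeomorphic to a spherical space form, and the universal cover of $M$ is diffeomorphic to $S^{3}$. (Equivalently, when $Rc_{\widetilde{g}}\geq 0$ is degenerate one may run the Ricci flow and use Hamilton's strong maximum principle to upgrade to $Rc>0$ before applying Hamilton's theorem.)
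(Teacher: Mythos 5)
Your reduction to Proposition~\ref{prop4.1} with $\beta=-\frac{1}{6}$, the Euler--Lagrange equation (\ref{eq4.34}), and the observation that the constant $\widetilde{c}\geq 0$ all match the paper's argument. The genuine gap is the step you yourself flag as the main obstacle: promoting the single scalar equation $\widetilde{\Delta}\widetilde{J}=\frac{1}{2}\widetilde{J}^{2}-\frac{3}{2}|\mathring{\widetilde{A}}|^{2}-\frac{3}{4}\widetilde{c}$ to the pointwise conclusion $Rc_{\widetilde{g}}\geq 0$, i.e.\ to pointwise $\sigma_{2}(\widetilde{A})\geq 0$ together with $\widetilde{J}>0$. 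No maximum principle, auxiliary function of $\widetilde{J}$ and $\sigma_{2}(\widetilde{A})$, or Moser iteration applied to this equation can deliver that: the equation prescribes only one scalar combination of $\widetilde{\Delta}\widetilde{J}$, $\widetilde{J}^{2}$ and $|\mathring{\widetilde{A}}|^{2}$, and the traceless part $\mathring{\widetilde{A}}$ is simply not determined by (nor pointwise controlled through) $\widetilde{J}$; a large value of $|\mathring{\widetilde{A}}|^{2}$ at a point is perfectly compatible with the equation (it just forces $\widetilde{\Delta}\widetilde{J}$ to be very negative there), so there is no mechanism forcing $|\mathring{\widetilde{A}}|^{2}\leq\frac{2}{3}\widetilde{J}^{2}$ pointwise. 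Indeed there is no reason to expect the maximizing metric itself to have $Rc\geq 0$.

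What the equation does give pointwise is exactly what the paper extracts: rewriting it as (\ref{eq4.46})--(\ref{eq4.47}), one gets $L\widetilde{J}=6\widetilde{c}+12|\mathring{\widetilde{A}}|^{2}\geq 0$, and since $Y(g)>0$ the Green's function of the conformal Laplacian is positive, whence $\widetilde{J}>0$ everywhere (your weaker ``$\widetilde{J}>0$ somewhere'' from testing constants is not enough and your proposed bootstrap is not supplied). This yields pointwise $R_{\widetilde{g}}>0$ plus only the \emph{integral} inequality $\int_{M}\sigma_{2}(\widetilde{A})\,d\widetilde{\mu}=\frac{\widetilde{c}}{4}\widetilde{\mu}(M)+\frac{1}{6}\int_{M}\widetilde{J}^{2}d\widetilde{\mu}>0$ as in (\ref{eq4.48}), and the passage from this integral pinching to a sphere conclusion is then delegated to the theorem of \cite{CD, GLW} on integrally pinched $3$-manifolds. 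That theorem does the heavy lifting you are trying to do by hand: it produces, via a fully nonlinear $\sigma_{2}$-type conformal deformation, a \emph{different} metric in the conformal class with pointwise $\sigma_{2}(A)>0$ and positive scalar curvature, hence positive Ricci in dimension $3$, and only then applies Hamilton's theorem. Your final Hamilton step would be fine if $Rc_{\widetilde{g}}\geq 0$ were in hand, but without the conformal-deformation input (or an equivalent substitute) the argument does not close.
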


\begin{remark}
\label{rmk4.2}(\ref{eq4.41}) is the same as%
\begin{equation}
\int_{M}\left\vert E\right\vert ^{2}d\mu \leq \frac{1}{48}\int_{M}R^{2}d\mu ,
\label{eq4.42}
\end{equation}%
here $E=Rc-\frac{R}{3}g$ is the traceless Ricci tensor.
\end{remark}

\begin{proof}
It follows from Proposition \ref{prop4.1} that%
\begin{equation}
\kappa =\sup_{\widetilde{g}\in \left[ g\right] }\widetilde{\mu }\left(
M\right) ^{\frac{1}{3}}\left( \int_{M}\widetilde{Q}d\widetilde{\mu }-\frac{1%
}{6}\int_{M}\widetilde{J}^{2}d\widetilde{\mu }\right)  \label{eq4.43}
\end{equation}%
is achieved. By (\ref{eq4.41}) we know $\kappa \geq 0$. Without losing of
generality we can assume the background metric $g$ is a maximizer and it has
volume $1$. Then%
\begin{equation}
Q-\frac{1}{3}\Delta J-\frac{1}{6}J^{2}=const.  \label{eq4.44}
\end{equation}%
Integrating both sides we get%
\begin{equation}
Q-\frac{1}{3}\Delta J-\frac{1}{6}J^{2}=\kappa \geq 0.  \label{eq4.45}
\end{equation}%
In another way it is%
\begin{equation}
-\frac{4}{3}\Delta J-2\left\vert \mathring{A}\right\vert ^{2}+\frac{2}{3}%
J^{2}=\kappa .  \label{eq4.46}
\end{equation}%
Here $\mathring{A}=A-\frac{J}{3}g$ is the traceless Schouten tensor. Since
the conformal Laplacian is given by $L=-8\Delta +4J$,%
\begin{equation}
LJ=6\kappa +12\left\vert \mathring{A}\right\vert ^{2}\geq 0.  \label{eq4.47}
\end{equation}%
Since $Y\left( g\right) >0$, we see the Green's function of $L$ must be
positive, hence either $J>0$ or $J\equiv 0$. The latter case contradicts
with the fact $Y\left( g\right) >0$. So the scalar curvature must be
strictly positive. Finally%
\begin{equation}
\int_{M}\sigma _{2}\left( A\right) d\mu =\frac{1}{4}\int_{M}\left( Q+\frac{1%
}{2}J^{2}\right) d\mu =\frac{\kappa }{4}+\frac{1}{6}\int_{M}J^{2}d\mu >0.
\label{eq4.48}
\end{equation}%
It follows from a result in \cite{CD, GLW} that the universal cover of $M$
is diffeomorphic to $S^{3}$.
\end{proof}

The above example also shows the interest in the following question:

\begin{problem}
\label{prob4.5}Let $\left( M,g\right) $ be a $3$ dimensional smooth compact
Riemannian manifold, can we find a conformal invariant condition which is
equivalent to the existence of a conformal metric with positive scalar and $%
Q $ curvature?
\end{problem}

We remark that by modifying the technique in \cite{GM}, it is shown in \cite%
{HY3} that on $3$ dimensional smooth compact Riemannian manifolds $\left(
M,g\right) $ with $R>0$ and $Q>0$, if $\widetilde{g}\in \left[ g\right] $
satisfies $\widetilde{Q}>0$, then $\widetilde{R}>0$ too.

\subsection*{Acknowledgement}

The research of Yang is supported by NSF grant 1104536. We would like to
thank Jie Qing and Xingwang Xu for their invitations to talk about the
results in this set of lecture notes.

\end{document}